\newtheorem{thm}{Theorem}[section]
\newtheorem{lemma}[thm]{Lemma}
\newtheorem{prop}[thm]{Proposition}
\newtheorem{cor}[thm]{Corollary}
\newtheorem*{thm*}{Theorem}
\theoremstyle{definition}
\newtheorem*{remark}{Remark}
\newtheorem{defi}[thm]{Definition}
\numberwithin{equation}{section}
\newcommand{\be}{\begin{equation}}
\newcommand{\ee}{\end{equation}}
\newcommand{\bee}{\begin{equation*}}
\newcommand{\eee}{\end{equation*}}
\newcommand{\bea}{\begin{eqnarray}}
\newcommand{\eea}{\end{eqnarray}}
\newcommand{\bs}{\begin{split}}
\newcommand{\es}{\end{split}}
\DeclareMathOperator{\Hyperg}{\mbox{ }_2 F_{1}}
\DeclareMathOperator{\divergence}{div}
\begin{document}

\title[Nonlocal ODE]
{ODE methods in non-local equations}

\author[W. Ao]{Weiwei Ao}

\address{Weiwei Ao
\hfill\break\indent
Wuhan University
\hfill\break\indent
Department of Mathematics and Statistics, Wuhan, 430072, PR China}
\email{wwao@whu.edu.cn}

\author[H. Chan]{Hardy Chan}

\address{Hardy Chan
\hfill\break\indent
ETH Z\"{u}rich,
\hfill\break\indent
Department of Mathematics, R\"{a}mistrasse 101, 8092 Z\"{u}rich, Switzerland}
\email{hardy.chan@math.ethz.ch}

\author[A. DelaTorre]{Azahara DelaTorre}

\address{Azahara DelaTorre
\hfill\break\indent
Albert-Ludwigs-Universit\"{a}t Freiburg
\hfill\break\indent
Mathematisches Institut, Ernst-Zermelo-Str.1,  D-79104 Freiburg (Breisgau), Germany.}
\email{azahara.de.la.torre@math.uni-freiburg.de}

\author[M. Fontelos]{Marco A. Fontelos}

\address{Marco A. Fontelos
\hfill\break\indent
ICMAT,
\hfill\break\indent
Campus de Cantoblanco, UAM, 28049 Madrid, Spain}
\email{marco.fontelos@icmat.es}

\author[M.d.M. Gonz\'alez]{Mar\'ia del Mar Gonz\'alez}

\address{Mar\'ia del Mar Gonz\'alez
\hfill\break\indent
Universidad Aut\'onoma de Madrid
\hfill\break\indent
Departamento de Matem\'aticas, Campus de Cantoblanco, 28049 Madrid, Spain}
\email{mariamar.gonzalezn@uam.es}

\author[J. Wei]{Juncheng Wei}
\address{Juncheng Wei
\hfill\break\indent
University of British Columbia
\hfill\break\indent
 Department of Mathematics, Vancouver, BC V6T1Z2, Canada} \email{jcwei@math.ubc.ca}

\maketitle

\begin{abstract}
Non-local equations cannot be treated using classical ODE theorems. Nevertheless, several new methods have been introduced in the non-local gluing scheme of our previous article \cite{Ao-Chan-DelaTorre-Fontelos-Gonzalez-Wei}; we survey and improve those, and present new applications as well.

First, from the explicit symbol of the conformal fractional Laplacian, a variation of constants formula is obtained for fractional Hardy operators. We thus develop, in addition to a suitable extension in the spirit of Caffarelli--Silvestre, an equivalent formulation as an infinite system of second order constant coefficient ODEs. Classical ODE quantities like the Hamiltonian and Wro\'{n}skian may then be utilized.

As applications, we obtain a Frobenius theorem and establish new Poho\v{z}aev identities. We also give a detailed proof for the non-degeneracy of the fast-decay singular solution  of the fractional Lane--Emden equation.


\end{abstract}

\section{Introduction}

Let $\gamma\in(0,1)$. We consider radially symmetric solutions  the fractional Laplacian equation
\begin{equation}\label{problem10}
(-\Delta)^\gamma u=Au^p\quad\text{in }\mathbb R^n\setminus\{0\},
\end{equation}
with an isolated singularity at the origin. Here $$p\in\left(\frac{n}{n-2\gamma},\frac{n+2\gamma}{n-2\gamma}\right],$$
and the constant $A:=A_{n,p,\gamma}(>0)$ is chosen so that $u_0(r)=r^{-\frac{2\gamma}{p-1}}$ is a singular solution to the equation. Note that this the exact growth rate around the origin of any other solution with non-removable singularity according to \cite{Ao-Chan-DelaTorre-Fontelos-Gonzalez-Wei, Caffarelli-Jin-Sire-Xiong,Chen-Quaas}.

Note that for $$p=\frac{n+2\gamma}{n-2\gamma}$$ the problem is critical for the Sobolev embedding $W^{\gamma,2}\hookrightarrow L^{\frac{2n}{n-2\gamma}}$. In addition, for this choice of nonlinearity, the equation has good conformal properties and, indeed, in conformal geometry it is known as the fractional Yamabe problem. In this case the constant $A$ coincides with the Hardy constant $\Lambda_{n,\gamma}$ given in \eqref{Hardy-constant}.

 There is an extensive literature on the fractional Yamabe problem by now. See \cite{Gonzalez-Qing,Gonzalez-Wang,Kim-Musso-Wei,Mayer-Ndiaye} for the smooth case, \cite{DelaTorre-Gonzalez,DelaTorre-delPino-Gonzalez-Wei,Ao-DelaTorre-Gonzalez-Wei,Ao-Gonzalez-Sire} in the presence of isolated singularities, and \cite{Gonzalez-Mazzeo-Sire,Ao-Chan-Gonzalez-Wei,Ao-Chan-DelaTorre-Fontelos-Gonzalez-Wei} when the singularities are not isolated but a higher dimensional set.

In this paper we take the analytical point of view and study several non-local ODE that are related to  problem \eqref{problem10}, presenting both survey and new results, in the hope that this paper serves as a guide for non-local ODE. A non-local equation such as \eqref{problem10} for  radially symmetric solutions $u=u(r)$, $r=|x|$, requires different techniques than regular ODE. For instance, existence and uniqueness theorems are not available in general, so one cannot reduce it to the study of a phase portrait. Moreover, the asymptotic behavior as $r\to 0$ or $r\to \infty$ is not clear either.

However, we will show that, in some sense, \eqref{problem10} behaves closely to its local counterpart (the case $\gamma=1$), which is given by the second order ODE
\begin{equation*}
\partial_{rr}u+\frac{n-1}{r}\partial_r u=Au^p.
\end{equation*}
%
In particular, for the survey part we will extract many results for non-local ODEs from the long paper \cite{Ao-Chan-DelaTorre-Fontelos-Gonzalez-Wei} but without many of the technicalities. However, from the time since \cite{Ao-Chan-DelaTorre-Fontelos-Gonzalez-Wei} first appeared, some of the proofs have been simplified; we present those in detail.

The main underlying idea, which was not fully exploited in \cite{Ao-Chan-DelaTorre-Fontelos-Gonzalez-Wei}, is to write problem \eqref{problem10} as an infinite dimensional ODE system. Each equation in the system is a standard second order ODE, the non-locality appears in the coupling of the right hand sides (see Corollary \ref{cor:ODEsystem}). The advantage of this formulation comes from the fact that, even though we started with a non-local ODE, we can still use a number of the standard results, as long as one takes care of this coupling.
For instance, we will be able to write the indicial roots for the system and a Wro\'{n}skian-type quantity which will be useful in the uniqueness proofs. Other applications include  novel Poho\v{z}aev-type identities. We also hope that this paper serves as a complement to the elliptic theory of differential edge operators from \cite{Mazzeo:edge,Mazzeo:edge2}. Further use could include more general semi-linear equations such as the one in \cite{Chan-Gonzalez-Huang-Mainini-Volzone}, but this is yet to be explored.

\medskip

Let us summarize our results. First, in Section \ref{section:Hamiltonian}, we consider existence theorems for \eqref{problem10}, both in the critical and subcritical case. We show that the change of variable
\begin{equation}\label{change-variable}
r=e^{-t}, \quad u(r)=r^{-\frac{2\gamma}{p-1}}v(-\log r)
\end{equation}
transforms \eqref{problem10} into the non-local equation of the form
\begin{equation}\label{problem11-1.3}
\int_{\mathbb R} \tilde{\mathcal K}(t-t')[v(t)-v(t')]\,dt'+Av(t)=Av(t)^p,\quad v=v(t), \quad t\in\mathbb R,
\end{equation}
for some singular kernel satisfying $\tilde{\mathcal K}(t)\sim |t|^{-1-2\gamma}$ as $|t|\to 0$. The advantage of \eqref{problem11-1.3} over the original \eqref{problem10} is that in the new variables the problem becomes \emph{autonomous} in some sense. Thus, even if we cannot plot it, one expects some kind of phase-portrait. Indeed, we show the existence of a monotone quantity (a \emph{Hamiltonian}) similar to those of \cite{CabreSola-Morales,Cabre-Sire1,Frank-Lenzmann-Silvestre} which, in the particular case $p=\frac{n+2\gamma}{n-2\gamma}$ is conserved along the $t$-flow.

The proofs have its origin in conformal geometry and, in particular, we give an interpretation of the change of variable \eqref{change-variable} in terms of the conformal fractional Laplacian on the cylinder. We also provide an extension problem for \eqref{problem11-1.3} in the spirit of the well known extension for the fractional Laplacian (\cite{Caffarelli-Silvestre,Stinga-Torrea} and many others). Note, however, that our particular extension has its origins in scattering theory on conformally compact Einstein manifolds (see \cite{Graham-Zworski,Chang-Gonzalez,Case-Chang} and the survey \cite{Gonzalez:survey}, for instance) and it does not produce fractional powers of operators, but their conformal  versions. This is the content of Section \ref{section:conformal}.

\medskip

In addition, for $p$ subcritical, this is, $$p\in\Big(\frac{n}{n-2\gamma},\frac{n+2\gamma}{n-2\gamma}\Big),$$
we will also consider radial solutions to the linearized problem around a
certain
solution $u_*$. The resulting equation may be written as
\begin{equation*}
(-\Delta)^\gamma \phi-pAu_*^{p-1}\phi=0,\quad \phi=\phi(r).
\end{equation*}
Defining the radially symmetric potential $\mathcal V_*(r):=pAr^{2\gamma}u_*^{p-1}$, this equation is equivalent to
\begin{equation}\label{linearization-introduction}
L_*\phi:=(-\Delta)^\gamma \phi-\frac{\mathcal V_*(r)}{r^{2\gamma}}\phi=0,\quad \phi=\phi(r).
\end{equation}
Note that
\begin{equation}
\label{behavior-potential-introduction}
\mathcal V_*(r)\to\kappa\quad\text{as} \quad r\to 0
\end{equation}
for the positive constant
\begin{equation*}\label{kappa}\kappa=pA.
\end{equation*}
Therefore to understand operators with critical Hardy potentials such as $L_*$ we will need to consider first
the constant coefficient operator
\begin{equation*}
L_\kappa:=(-\Delta)^\gamma -\frac{\kappa}{r^{2\gamma}}.
\end{equation*}
The fractional Hardy inequality (\cite{Prehistory-Hardy,Kovalenko-Perelmuter-Semenov,Herbst,Yafaev,Beckner:Pitts,Frank-Lieb-Seiringer:Hardy}) asserts the non-negativity of such operator up to $\kappa=\Lambda_{n,\gamma}$, hence whenever
\begin{equation}\label{eq:stability-threshold}
pA \leq \Lambda_{n,\gamma},
\end{equation}
and this distinguishes the stable/unstable cases (see Definition \ref{defi:stable}).

Nonlinear Schr\"odinger equations with fractional Laplacian have received a lot of attention recently (see, for instance, \cite{Felmer-Quaas-Tan,Davila-DelPino-Wei,Ao-Chan-Gonzalez-Wei:supercritical}), while ground state solutions for non-local problems have been  considered in the papers \cite{Frank-Lenzmann,Frank-Lenzmann-Silvestre}.

However, mapping properties for a linear operator such as $L_*$ from \eqref{linearization-introduction}, or even $L_\kappa$, had been mostly open until the publication of \cite{Ao-Chan-DelaTorre-Fontelos-Gonzalez-Wei}. See also the related paper \cite{Frank-Merz-Siedentop}, which deals with mapping properties of powers of this operator in homogeneous Sobolev spaces, together with some applications from \cite{Frank-Merz-Siedentop-Simon}, where they prove the Scott conjecture for large atoms taking into account relativistic effects near the nucleus.

 One of the cornerstones in \cite{Ao-Chan-DelaTorre-Fontelos-Gonzalez-Wei} is to write a Green's function for the constant coefficient operator $L_\kappa$
 in suitable weighted spaces. While invertibility for $L_\kappa$ in terms of the behavior of its right hand side had been considered in \cite{Abdellaoui-Medina-Peral-Primo}, here we go further and calculate the indicial roots of the problem to characterize invertibility precisely.  This is done by writing a variation of constants formula to produce solutions to $L_\kappa\phi=h$ from elements in the kernel $L_\kappa \phi=0$. In particular, such $\phi$ is governed by the indicial roots of the equation. However, in contrast to the local case where a second order ODE only has two indicial roots, here we find an infinite number of them and, moreover, the solution is not just a combination of two linearly independent solutions of the homogeneous problem, but an infinite sum. We summarize those results in Section \ref{section:Hardy}. We also present, in full detail, simplified proofs of the original statements.

One obtains, as a consequence, a Frobenious type theorem which yields a precise asymptotic expansion for solutions to \eqref{linearization-introduction} in terms of the asymptotics of the potential as $r\to 0$.  Indeed,  recall that we have that \eqref{behavior-potential-introduction}, so we can use what we know about $L_\kappa$ in order to obtain information about $L_*$. In particular, we find the indicial roots of $L_*$ both as $r\to 0$ and as $r\to \infty$.

Next, we move on to original research. Section \ref{section:non-degeneracy} is a combination of new and known results. There we give full account of non-degeneracy of equation \eqref{problem10} for the particular solution $u_*$, this is, we provide a characterization of the kernel of the linearized operator $L_*$, both in the stable and in the unstable cases.

One of the main contributions of this paper is the introduction of a new Wro\'{n}skian quantity \eqref{Wronskian} for a non-local ODE such as \eqref{linearization-introduction}, that allows to compare any two solutions, and plays the role of the usual Wro\'{n}skian $W=w_1'w_2-w_1w_2'$ for a second order linear ODE. While this quantity is close in spirit to that of \cite{CabreSola-Morales,Cabre-Sire1,Frank-Lenzmann-Silvestre}, ours seems to adapt better to an autonomous non-local ODE.

Using similar techniques for the non-linear problem, we also provide new Poho\v{z}aev identities: Proposition \ref{prop:Pohozaev1} (for the extension problem) and Proposition \ref{prop:Pohozaev2} (for the non-local problem formulated as a coupled ODE system).   The underlying idea is that, switching from the radial variable $r$ to the logarithmic $t$ variable, and from the standard fractional Laplacian $(-\Delta)^\gamma$ to the conformal fractional Laplacian $P_\gamma$, we are able to find conserved quantities since the resulting ODE is, though non-local, autonomous.

\medskip

Some comments about notation:
\begin{itemize}
\item The Hardy constant is given by
\begin{equation}\label{Hardy-constant}
\Lambda_{n,\gamma}=2^{2\gamma}\left(\frac{\Gamma(\frac{n+2\gamma}{4})}{\Gamma(\frac{n-2\gamma}{4})}\right)^2,
\end{equation}
where $\Gamma$ is the ordinary Gamma function.
\item We write $f_1\asymp f_2$ if the two (positive) functions satisfy $C^{-1}f_1 \leq f_2 \leq C f_1$ for some positive constant $C$.
\item While functions that live on $\mathbb R^n$ are represented by lowercase letters, their corresponding extension to $\mathbb R^{n+1}_+$ will be denoted by the same letter in capitals.
\item $\mathbb S^{n}$ is the unit sphere with its canonical metric.
\item $\Hyperg$ is the standard hypergeometric function.
\end{itemize}

\section{The conformal fractional Laplacian on the cylinder}\label{section:conformal}

Most of the results here can be found in \cite{Ao-Chan-DelaTorre-Fontelos-Gonzalez-Wei}. We review the construction of the fractional Laplacian on the cylinder $\mathbb R \times \mathbb S^{n-1}$, both by Fourier methods and by constructing an extension problem to one more dimension.

\subsection{Conjugation}

Let us look first at the critical power, and consider the equation
\begin{equation}\label{problem-critical}
(-\Delta)^\gamma u=\Lambda_{n,\gamma}u^\frac{n+2\gamma}{n-2\gamma}\quad\text{in }\mathbb R^n\setminus\{0\}.
\end{equation}
It is well known \cite{Caffarelli-Jin-Sire-Xiong} that non-removable singularities must be, as $r\to 0$, of the form
\begin{equation}\label{uw}
u=r^{-\frac{n-2\gamma}{2}}w,
\end{equation}
for a bounded function $w$.

We use conformal geometry to rewrite the fractional Laplacian operator $(-\Delta)^\gamma$ on $\mathbb R^n$ in radial coordinates $r>0$, $\theta\in\mathbb S^{n-1}$. The Euclidean metric in polar coordinates is given by $|dx|^2=dr^2+r^2d\theta^2$, where we denote $d\theta^2$ for the metric on the standard sphere $\mathbb S^{n-1}$. Now set the new variable
\begin{equation*}
t=-\log r\in\mathbb R
\end{equation*}
 and consider the cylinder $M=\mathbb R\times \mathbb S^{n-1}$, with the metric given by
$$g_c:=\frac{1}{r^2} |dx|^2=\frac{1}{r^2}(dr^2+r^2d\theta^2)=dt^2+d\theta^2,$$
which is conformal to the original Euclidean one.

The conformal fractional Laplacian on the cylinder, denoted by  $P_\gamma$, is defined in \cite{DelaTorre-Gonzalez}. We will not present the full construction here, just mention that its conformal property implies
\begin{equation}\label{conformal-property}
P_\gamma w= r^{\frac{n+2\gamma}{2}} (-\Delta)^\gamma (r^{-\frac{n-2\gamma}{2}}w),
\end{equation}
and thus, if we define $w$ by \eqref{uw}, in the new variables $w=w(t,\theta)$, then the original equation \eqref{problem-critical} transforms into
\begin{equation*}
P_\gamma w=\Lambda_{n,\gamma}w^\frac{n+2\gamma}{n-2\gamma},\quad t\in\mathbb R, \ \theta \in\mathbb S^{n-1}
\end{equation*}
for a smooth solution $w$. Therefore we have shifted the singularity from the solution to the metric.

On the contrary, equation  \eqref{problem10} for a subcritical power
$$p\in \Big(\frac{n}{n-2\gamma},\frac{n+2\gamma}{n-2\gamma}\Big)$$
does not have good conformal properties. Still, given
 $u\in \mathcal C^{\infty}(\mathbb R^{n}\setminus\{0\})$, we can consider
\begin{equation*}
u=r^{-\frac{n-2\gamma}{2}}w=r^{-\frac{2\gamma}{p-1}}v,\quad r=e^{-t},
\end{equation*}
and define the conjugate operator
\begin{equation*}\label{tilde-P}
\tilde P_\gamma(v):=
r^{Q_0}P_\gamma\big(r^{-Q_0}v\big)
=r^{\frac{2\gamma}{p-1}p}(-\Delta_{\mathbb R^n})^\gamma u.
\end{equation*}
where we have defined the constant
\begin{equation*}
Q_0:=-\tfrac{n-2\gamma}{2}+\tfrac{2\gamma}{p-1}>0.
\end{equation*}
The advantage of working with $\tilde P_\gamma$ is that problem \eqref{problem10} is equivalent to
\begin{equation*}
\tilde P_\gamma v =Av^{p},\quad t\in\mathbb R, \ \theta\in\mathbb S^{n-1},
\end{equation*}
for some $v=v(t,\theta)$ smooth.

\subsection{The operator as a singular integral}

Both $P_\gamma$ and $\tilde P_\gamma$ on $\mathbb R\times\mathbb S^{n-1}$ have been well studied. Note that $Q_0=0$ for the critical value of $p$, so that $\tilde P_\gamma$ reduces to the original $P_\gamma$. In the following, we will only present the results for $\tilde P_\gamma$ but, of course, they are true also for $P_\gamma$ (simply substitute $Q_0=0$ in the statement).

Consider the spherical harmonic decomposition for $\mathbb S^{n-1}$. With some abuse of notation, let $\mu_m$, $m=0,1,2,\dots$ be the eigenvalues of $\Delta_{\mathbb S^{n-1}}$, repeated according to multiplicity (this is, $\mu_0=0$, $\mu_1,\ldots,\mu_n=n-1$, \dots). Then any function $v$ on $\mathbb R\times \mathbb S^{n-1}$ may be decomposed as
\begin{equation*}
v(t,\theta)=\sum_{m} v_m(t) E_m(\theta),
\end{equation*}
where $\{E_m(\theta)\}$ is the corresponding basis of eigenfunctions.

The operator $\tilde P_\gamma$ diagonalizes under such eigenspace decomposition, and moreover, it is possible to calculate the Fourier symbol of each projection. Let
\begin{equation*}\label{fourier}
\hat{v}(\xi)=\frac{1}{\sqrt{2\pi}}\int_{\mathbb R}e^{-i\xi \cdot t} v(t)\,dt
\end{equation*}
be our normalization for the one-dimensional Fourier transform. Let us define also
\begin{equation*}\label{Am}
A_m=\tfrac{1}{2}+\tfrac{\gamma}{2}
+\tfrac{1}{2}\sqrt{\big(\tfrac{n}{2}-1\big)^2+\mu_m},\quad B_m=\tfrac{1}{2}-\tfrac{\gamma}{2}+\tfrac{1}{2}\sqrt{\big(\tfrac{n}{2}-1\big)^2+\mu_m}.
\end{equation*}

\begin{prop}[\cite{DelaTorre-Gonzalez,Ao-Chan-DelaTorre-Fontelos-Gonzalez-Wei}]\label{prop:symbol}
 Fix $\gamma\in (0,\tfrac{n}{2})$ and let $\tilde{P}^m_{\gamma}$ be the projection of the operator $\tilde{P}_\gamma$ over each eigenspace $\langle E_m\rangle$. Then
$$\widehat{\tilde{P}_\gamma^{(m)} (v_m)}(\xi)
=\tilde{\Theta}^{(m)}_\gamma(\xi) \,\widehat{v_m}(\xi),$$
and this Fourier symbol is given by
\begin{equation*}\label{symbolV}
\tilde{\Theta}^{(m)}_{\gamma}(\xi)=2^{2\gamma}\frac{\Gamma \big(A_m
+\frac{1}{2}(Q_0+\xi i)\big)\Gamma \big(A_m
-\frac{1}{2}(Q_0+\xi i)\big)}
{\Gamma \big(B_m
+\frac{1}{2}(Q_0+\xi i)\big)\Gamma \big(B_m
-\frac{1}{2}(Q_0+\xi i)\big)}.
\end{equation*}
Moreover,
\begin{equation*}
\tilde P_\gamma^{(m)}(v_m)(t)=\int_{\mathbb R} \tilde{\mathcal K}_m(t-t')[v_m(t)-v_m(t')]\,dt'+A_{n,p,\gamma}v_m(t),
\end{equation*}
for a convolution kernel $\tilde{\mathcal K}_m$ on $\mathbb R$
with the asymptotic behavior
\begin{equation*}
\tilde{\mathcal K}_m(t)\asymp
\begin{cases}
 |t|^{-1-2\gamma} &\mbox{ as }|t|\to 0,\\
e^{-\big(1+\gamma+\sqrt{(\frac{N-2}{2})^2+\mu_m}+Q_0\big)t} &\mbox{ as }t \to +\infty,\\
e^{\big(1+\gamma+\sqrt{(\frac{N-2}{2})^2+\mu_m}-Q_0\big)t} &\mbox{ as }t \to -\infty.
\end{cases}
\end{equation*}
In the particular case that $m=0$,
\begin{equation*}\label{kernel-K}
\tilde{\mathcal K}_0(t)=
c\,e^{-(\frac{2\gamma}{p-1}-\frac{n-2\gamma}{2})t}e^{-\frac{n+2\gamma}{2}|t|}\Hyperg\big( \tfrac{n+2\gamma}{2},1+\gamma;\tfrac{n}{2};e^{-2|t|}\big).
\end{equation*}
\end{prop}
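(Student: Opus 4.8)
The plan is to first pin down the Fourier symbol and then invert it. By the conformal property \eqref{conformal-property}, $P_\gamma$ is the conjugate of $(-\Delta)^\gamma$ on $\mathbb{R}^n$ under $u=r^{-\frac{n-2\gamma}{2}}w$, and $\tilde P_\gamma$ is in turn the conjugate of $P_\gamma$ by the weight $r^{Q_0}=e^{-Q_0 t}$, which on the Fourier side amounts to the shift $i\xi\mapsto Q_0+i\xi$ in the arguments. So everything reduces to the classical identity for $(-\Delta)^\gamma$ on separated functions: if $E_m$ has degree $k_m$ (so $\mu_m=k_m(k_m+n-2)$ and $\sqrt{(\tfrac n2-1)^2+\mu_m}=k_m+\tfrac{n-2}{2}=:\delta_m$), then
\begin{equation*}
(-\Delta)^\gamma\big(r^{-\sigma}E_m(\theta)\big)
=2^{2\gamma}\,\frac{\Gamma\big(\tfrac{\sigma+2\gamma+k_m}{2}\big)\,\Gamma\big(\tfrac{n-\sigma+k_m}{2}\big)}
{\Gamma\big(\tfrac{\sigma+k_m}{2}\big)\,\Gamma\big(\tfrac{n-\sigma-2\gamma+k_m}{2}\big)}\,r^{-\sigma-2\gamma}E_m(\theta),
\end{equation*}
which follows from Bochner's relation for the Fourier transform of $|x|^{-\sigma}$ times a spherical harmonic together with $\widehat{(-\Delta)^\gamma f}=|\xi|^{2\gamma}\hat f$ (equivalently, from the Caffarelli--Silvestre extension and a Bessel-type vertical ODE, or from a direct beta-integral evaluation). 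Decomposing $w=\sum_m w_m(t)E_m$ and Fourier-transforming in $t=-\log r$, the function $r^{-\frac{n-2\gamma}{2}}w$ is a superposition of $r^{-\sigma}$ with $\sigma=\tfrac{n-2\gamma}{2}+i\xi$; substituting this, multiplying by $r^{\frac{n+2\gamma}{2}}$, and bookkeeping the arguments (e.g.\ $\tfrac{\sigma+2\gamma+k_m}{2}$ collapses to $\tfrac{1+\gamma+\delta_m}{2}+\tfrac{i\xi}{2}=A_m+\tfrac{i\xi}{2}$, and similarly for the others) gives $\tilde\Theta_\gamma^{(m)}$ after the $Q_0$-shift. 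The hypothesis $\gamma\in(0,\tfrac n2)$ is exactly what keeps the Gamma-function arguments admissible.

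For the singular-integral form I would invert $\tilde\Theta_\gamma^{(m)}$. Stirling gives $\tilde\Theta_\gamma^{(m)}(\xi)=|\xi|^{2\gamma}\big(1+O(|\xi|^{-1})\big)$ as $|\xi|\to\infty$ on $\mathbb{R}$, so $\tilde\Theta_\gamma^{(m)}(\xi)-\tilde\Theta_\gamma^{(m)}(0)$ is, up to a smoother remainder, the symbol of $(-\partial_{tt})^\gamma$; hence its inverse transform $\tilde{\mathcal K}_m$ carries the Riesz singularity $\asymp|t|^{-1-2\gamma}$ at $t=0$, and splitting off the value at the origin produces the additive term, which is $A_{n,p,\gamma}$ exactly when $m=0$ since $\tilde\Theta_\gamma^{(0)}(0)$ is precisely the constant normalising $u_0=r^{-\frac{2\gamma}{p-1}}$ (for $m\ge1$ it is the analogous $\tilde\Theta_\gamma^{(m)}(0)$). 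For the rates as $t\to\pm\infty$ I would shift the inversion contour and collect residues: $\tilde\Theta_\gamma^{(m)}$ is meromorphic, with poles of $\Gamma\big(A_m+\tfrac12(Q_0+i\xi)\big)$ at $\xi=i(2A_m+Q_0+2k)$ and of $\Gamma\big(A_m-\tfrac12(Q_0+i\xi)\big)$ at $\xi=-i(2A_m-Q_0+2k)$, $k\ge0$ — none annihilated by the zeros coming from the $B_m$-denominators since $B_m-A_m=-\gamma\notin\mathbb{Z}$ — and the residue at the pole nearest $\mathbb{R}$ yields $\tilde{\mathcal K}_m(t)\asymp e^{-(2A_m+Q_0)t}=e^{-(1+\gamma+\delta_m+Q_0)t}$ as $t\to+\infty$ and $\tilde{\mathcal K}_m(t)\asymp e^{(2A_m-Q_0)t}=e^{(1+\gamma+\delta_m-Q_0)t}$ as $t\to-\infty$; positivity of these exponents uses only $\delta_m\ge\tfrac{n-2}{2}$ and $0\le Q_0<\tfrac{n-2\gamma}{2}$, which hold on the range $\tfrac n{n-2\gamma}<p\le\tfrac{n+2\gamma}{n-2\gamma}$.

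For the closed form of $\tilde{\mathcal K}_0$ it is cleaner to transform the Euclidean singular integral directly. With $x=r\theta$, $y=\rho\sigma$, $r=e^{-t}$, $\rho=e^{-t'}$ one has $|x-y|^2=e^{-(t+t')}\big(2\cosh(t-t')-2\langle\theta,\sigma\rangle\big)$ and $dy=e^{-nt'}\,dt'\,d\sigma$, so after multiplying by $r^{\frac{n+2\gamma}{2}}$ — and after the usual symmetrisation that converts the residual inhomogeneous factor into the constant potential and inserting $e^{-Q_0(\cdot)}$ for $\tilde P_\gamma$ — the kernel is a multiple of $\big(\cosh(t-t')-\langle\theta,\sigma\rangle\big)^{-\frac{n+2\gamma}{2}}$. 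Integrating this against the constant $E_0$ over $\mathbb{S}^{n-1}$ reduces, via the polar angle, to $\int_{-1}^1(1-s^2)^{\frac{n-3}{2}}\big(\cosh\tau-s\big)^{-\frac{n+2\gamma}{2}}\,ds$ with $\tau=t-t'$, an Euler integral equal to a $\Hyperg$ of argument $\tfrac{2}{\cosh\tau+1}$; writing $\tfrac{2}{\cosh\tau+1}=\tfrac{4\sqrt z}{(1+\sqrt z)^2}$ with $z=e^{-2|\tau|}$ and applying the quadratic transformation $\Hyperg\big(a,b;2b;\tfrac{4\sqrt z}{(1+\sqrt z)^2}\big)=(1+\sqrt z)^{2a}\Hyperg\big(a,a-b+\tfrac12;b+\tfrac12;z\big)$ with $a=\tfrac{n+2\gamma}{2}$, $b=\tfrac{n-1}{2}$ collapses the parameters to $\Hyperg\big(\tfrac{n+2\gamma}{2},1+\gamma;\tfrac n2;e^{-2|\tau|}\big)$, while the elementary factors combine to $e^{-\frac{n+2\gamma}{2}|\tau|}$; reinstating $e^{-Q_0\tau}$ gives the stated $\tilde{\mathcal K}_0$. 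As consistency checks, $\Hyperg(a,b;c;z)\sim C(1-z)^{c-a-b}=C(1-z)^{-1-2\gamma}$ as $z\to1^-$ with $1-e^{-2|\tau|}\sim2|\tau|$ recovers the $|t|^{-1-2\gamma}$ singularity, and $\Hyperg(a,b;c;0)=1$ recovers the exponential rates.

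The main obstacle is not the symbol (Gamma-function bookkeeping) but making the inversion honest: justifying the contour shift and the termwise residue sum for a symbol of polynomial growth, and upgrading the $O$-estimates to the two-sided $\asymp$ — i.e.\ checking that the leading residue neither vanishes nor is dominated, both near the diagonal and at infinity. For general $m$ a more elementary alternative to the contour estimates is the Funk--Hecke route: the $\sigma$-integral above becomes a Gegenbauer function of $\cosh\tau$ whose large-argument behaviour $(\cosh\tau)^{-(\frac{n+2\gamma}{2}+k_m)}$ reproduces $e^{-(1+\gamma+\delta_m)|\tau|}$ directly, trading the residue analysis for the asymptotics of $\Hyperg$ near its two singular points.
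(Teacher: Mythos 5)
Your derivation is sound. Since this proposition is cited from \cite{DelaTorre-Gonzalez,Ao-Chan-DelaTorre-Fontelos-Gonzalez-Wei} rather than proved in the present survey, the relevant comparison is to those references, which proceed along similar lines: compute the Fourier symbol, then transform the Euclidean singular integral to the cylinder and use hypergeometric identities for the closed form. A few remarks. Your route to the symbol via Bochner's relation for $(-\Delta)^\gamma\bigl(r^{-\sigma}E_m\bigr)$ at the complex exponent $\sigma=\frac{n-2\gamma}{2}+i\xi$, followed by the $Q_0$-shift, is an elementary alternative to the scattering-theoretic derivation in \cite{DelaTorre-Gonzalez} (meromorphic continuation of the hyperbolic resolvent); both reach the same ratio of Gamma functions, and your book-keeping $\tfrac{\sigma+2\gamma+k_m}{2}=A_m+\tfrac{i\xi}{2}$, etc.\ is correct. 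Your Euler-integral computation of $\tilde{\mathcal K}_0$ checks out: with $u=\frac{1+s}{2}$ the sphere integral becomes $(\cosh\tau+1)^{-\frac{n+2\gamma}{2}}\Hyperg\bigl(\tfrac{n+2\gamma}{2},\tfrac{n-1}{2};n-1;\tfrac{2}{\cosh\tau+1}\bigr)$, and the quadratic transformation with $\sqrt z=e^{-|\tau|}$ collapses this to $e^{-\frac{n+2\gamma}{2}|\tau|}\Hyperg\bigl(\tfrac{n+2\gamma}{2},1+\gamma;\tfrac n2;e^{-2|\tau|}\bigr)$ after simplifying $(\cosh\tau+1)^{-\frac{n+2\gamma}{2}}(1+e^{-|\tau|})^{n+2\gamma}$; the $e^{-Q_0\tau}$ prefactor from conjugation reproduces the stated formula, and your consistency check via $\Hyperg(a,b;c;z)\sim C(1-z)^{c-a-b}$ with $c-a-b=-1-2\gamma$ correctly recovers the Riesz singularity.

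Two points to watch. First, you correctly observe that the additive constant must be $\tilde\Theta_\gamma^{(m)}(0)=\int\tilde{\mathcal K}_m$, which equals $A_{n,p,\gamma}$ only when $m=0$; as written in the proposition the constant $A_{n,p,\gamma}$ for general $m$ is indeed a slip (the reference states it with the $m$-dependence). Second, your residue/contour-shift argument for the decay rates is sketched honestly but left as a gap: because $\tilde\Theta_\gamma^{(m)}$ grows like $|\xi|^{2\gamma}$, the inverse Fourier integral only converges distributionally and one must regularize before moving the contour and justify the termwise residue sum. Your Funk--Hecke alternative --- read off the decay of the sphere-averaged kernel from the asymptotics of the resulting $\Hyperg$ (or Gegenbauer function of $\cosh\tau$) at its two singular points --- sidesteps this entirely, applies uniformly in $m$, and is in fact closer to how the cited reference establishes the two-sided bounds; I would promote it from a footnote to the primary argument for the rates at $\pm\infty$.
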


\subsection{The extension problem}

For $\gamma\in(0,1)$, this operator can also be understood as the Dirichlet-to-Neumann operator for an extension problem in the spirit of \cite{Caffarelli-Silvestre,Chang-Gonzalez,Case-Chang}. Before we do that, we need to introduce some notation. Define
\begin{equation*}\tilde d_\gamma =-
\frac{2^{2\gamma-1}\Gamma(\gamma)}{\gamma\Gamma(-\gamma)},\quad
\alpha
=\frac{\Gamma(\frac{n}{2})\Gamma(\gamma)}
{\Gamma\big(\gamma+\frac{\gamma}{p-1}\big)
\Gamma\big(\frac{n}{2}-\frac{\gamma}{p-1}\big)}.
\end{equation*}

Take the metric on the extension manifold $X^{n+1}=(0,2)\times \mathbb R\times\mathbb S^{n-1}$ with coordinates $R\in(0,2)$, $t\in\mathbb R$, $\theta\in\mathbb S^{n-1}$
with standard hyperbolic metric
\begin{equation*}
\bar g=dR^2+\Big(1+\tfrac{R^2}{4}\Big)^2 dt^2+\Big(1-\tfrac{R^2}{4}\Big)^2 d\theta^2.
\end{equation*}
Note that the apparent singularity at $R=2$ has the same behavior  as the origin in polar coordinates. This fact will be implicitly assumed in the following exposition without further mention.

The boundary of $X^{n+1}$ (actually, its conformal infinity) is given by $\{R=0\}$, and it coincides precisely the cylinder $M^n=\mathbb R\times \mathbb S^{n-1}$, with its canonical metric
$g_c=dt^2+d\theta^2$.

 Now we make the change of variables from the coordinate $R$ to
\begin{equation*}\label{rho*}
\rho(R)=
\left[\alpha^{-1}\big(\tfrac{4R}{4+R^2}\big)^{\tfrac{n-2\gamma}{2}}
\Hyperg\Big(\tfrac{\gamma}{p-1},\tfrac{n-2\gamma}{2}-\tfrac{\gamma}{p-1};
\tfrac{n}{2};\big(\tfrac{4-R^2}{4+R^2}\big)^2\Big)\right]^{2/(n-2\gamma)},
\quad R\in(0,2).
\end{equation*}
The function $\rho$ is known as the special (or adapted) defining function. It is strictly monotone with respect to $R$, which implies that we can write $R=R(\rho)$ even if we do not have a precise formula and, in particular,
$\rho\in(0,\rho_0)$ for
\begin{equation*}\label{rho*0}
\rho_0:=\rho(2)=\alpha^{-\frac{2}{n-2\gamma}}.
\end{equation*}
Moreover, it has the asymptotic expansion near the conformal infinity
\begin{equation*}
\label{asymptotics-rho*}\rho(R)=R\left[ 1+O(R^{2\gamma})\right].
\end{equation*}
In the new manifold $X^*=(0,\rho_0)\times\mathbb R\times\mathbb S^{n-1}$ consider the metric $\bar g^*:=(\frac{\rho}{R})^2\bar g$. This metric satisfies
$$\bar g^*=d\rho^2(1+O(\rho^{2\gamma}))+g_c(1+O(\rho^{2\gamma})).$$

\begin{prop}[\cite{Ao-Chan-DelaTorre-Fontelos-Gonzalez-Wei}]\label{prop:divV*}
Let $v$ be a smooth function on $M=\mathbb R\times\mathbb S^{n-1}$.
The extension problem
\begin{equation}\label{divV*}
\left\{\begin{array}{@{}r@{}l@{}l}
 -\divergence_{\bar{g}^*}(\rho^{1-2\gamma}\nabla_{\bar{g}^*}V)
-\rho^{-(1+2\gamma)}\left(\tfrac{4R}{4+R^2}\right)^{2}2Q_0\,\partial_t V&\,=0\quad &\text{in } (X,\bar g^*), \medskip\\
V|_{\rho=0}&\,=v\quad &\text{on }M,
\end{array}\right.
\end{equation}
has a unique solution $V$. Moreover, for its Neumann data,
\begin{equation*}\label{Neumann-V*}
\tilde P_\gamma(v)=-\tilde d_\gamma \lim_{\rho \to 0}\rho^{1-2\gamma} \partial_{\rho} V+A_{n,p,\gamma}v.
\end{equation*}
\end{prop}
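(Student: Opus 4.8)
The plan is to derive the extension problem \eqref{divV*} by starting from the classical Caffarelli--Silvestre--type extension for $(-\Delta)^\gamma$ on $\mathbb R^n$, transporting it through the sequence of conformal changes $|dx|^2 \rightsquigarrow g_c \rightsquigarrow \bar g \rightsquigarrow \bar g^*$ that were set up in the preceding subsections, and then reading off the Dirichlet-to-Neumann operator. Concretely, first I would recall that for $u = r^{-Q_0 - \frac{n-2\gamma}{2}}v$ the fractional Laplacian $(-\Delta)^\gamma u$ equals, up to the homogeneity factor $r^{\frac{2\gamma}{p-1}p}$, the operator $\tilde P_\gamma v = r^{Q_0}P_\gamma(r^{-Q_0}v)$. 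The conformal fractional Laplacian $P_\gamma$ on $(M^n, g_c)$ is, by its scattering-theoretic definition on the conformally compact Einstein manifold $(X^{n+1},\bar g)$, precisely the normalized Dirichlet-to-Neumann map for the degenerate elliptic equation $-\divergence_{\bar g}(\rho^{1-2\gamma}\nabla_{\bar g} V) + E(\rho)V = 0$ with a lower-order curvature term $E(\rho)$; this is the Chang--Gonz\'alez / Case--Chang formulation. So the main content is: (i) compute this extension in the coordinates $(R,t,\theta)$ adapted to $\bar g = dR^2 + (1+\tfrac{R^2}{4})^2 dt^2 + (1-\tfrac{R^2}{4})^2 d\theta^2$; (ii) pass to the adapted defining function $\rho = \rho(R)$, which is designed exactly so that the curvature term $E(\rho)$ is eliminated and the metric takes the form $\bar g^* = d\rho^2(1+O(\rho^{2\gamma})) + g_c(1+O(\rho^{2\gamma}))$; and (iii) perform the conjugation by $r^{-Q_0}$, i.e. $V \mapsto r^{-Q_0}V$ equivalently $V \mapsto e^{Q_0 t}V$, and track how the first-order term $-2Q_0\,\partial_t V$ with weight $\rho^{-(1+2\gamma)}(\tfrac{4R}{4+R^2})^2$ is produced by commuting $e^{Q_0 t}$ past the divergence operator.

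The key computation is the conjugation step. Writing $\mathcal{L}V := -\divergence_{\bar g^*}(\rho^{1-2\gamma}\nabla_{\bar g^*}V)$, for any function $\Phi$ one has the identity
\[
e^{Q_0 t}\mathcal{L}(e^{-Q_0 t}V) = \mathcal{L}V - \rho^{1-2\gamma}\,\bar g^*(\nabla Q_0 t,\nabla Q_0 t)\,V + (\text{cross terms linear in }\partial_t V),
\]
and since $\bar g^*$ is (to leading order and, in the $t$-variable, exactly) a product with $g_c = dt^2 + d\theta^2$, the gradient of $Q_0 t$ is a $\rho$-independent vector field, the zeroth-order term cancels against the curvature potential that $P_\gamma$ carries for $m=0$ — consistently with the fact that $\tilde\Theta^{(0)}_\gamma$ in Proposition~\ref{prop:symbol} is the symbol with the Hardy constant $A_{n,p,\gamma}$ already subtracted — and the cross term produces precisely $-2Q_0\,\rho^{1-2\gamma}\,\bar g^*(\nabla t,\nabla V)$. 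Expressing $\bar g^*(\nabla t,\cdot)$ in the $(R,t,\theta)$ coordinates, where the $dt^2$-coefficient of $\bar g$ is $(1+\tfrac{R^2}{4})^2 = (\tfrac{4+R^2}{4})^2$ and the conformal factor $(\tfrac{\rho}{R})^2$ contributes, one gets the stated weight $\rho^{-(1+2\gamma)}(\tfrac{4R}{4+R^2})^2$ after multiplying through. Existence and uniqueness of $V$ then follows from the standard variational theory for the degenerate weight $\rho^{1-2\gamma}$ (which is an $A_2$ Muckenhoupt weight since $1-2\gamma \in (-1,1)$): the energy $\int_{X^*}\rho^{1-2\gamma}|\nabla_{\bar g^*}V|^2 + (\text{first order})$ is coercive on the weighted space $H^1(X^*,\rho^{1-2\gamma})$ after the harmless first-order perturbation is absorbed, so Lax--Milgram applies; the apparent singularity at $R=2$ is handled exactly as the origin in polar coordinates, as flagged in the text.

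Finally, the Neumann data formula $\tilde P_\gamma(v) = -\tilde d_\gamma \lim_{\rho\to 0}\rho^{1-2\gamma}\partial_\rho V + A_{n,p,\gamma}v$ is obtained by combining the known Dirichlet-to-Neumann identity for $P_\gamma$ in the adapted defining function — namely $P_\gamma w + (\text{Hardy-type constant})w = -\tilde d_\gamma \lim_{\rho\to 0}\rho^{1-2\gamma}\partial_\rho W$, the constant being fixed by matching with the explicit symbol $\tilde\Theta^{(m)}_\gamma$ at, say, the lowest mode — with the conjugation $V = e^{Q_0 t}W$, $v = e^{Q_0 t}w$: since $\partial_\rho$ commutes with multiplication by $e^{Q_0 t}$, the boundary limit transforms covariantly, and $\tilde P_\gamma(v) = e^{Q_0 t}(P_\gamma + \cdots)(e^{-Q_0 t}v)$ reassembles to the claimed expression, with $A_{n,p,\gamma}$ emerging from the relation $A_{n,p,\gamma} = \tilde\Theta^{(0)}_\gamma(0)$-type evaluation of the Gamma-function symbol. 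The main obstacle I anticipate is bookkeeping: correctly tracking the conformal factors $(\tfrac{\rho}{R})^2$, the hyperbolic-metric coefficients $(1\pm\tfrac{R^2}{4})^2$, and the homogeneity weights $r^{\pm Q_0}$ simultaneously through the divergence operator without sign or power errors — in particular verifying that the zeroth-order terms cancel exactly (so no spurious potential survives) and that the surviving first-order coefficient is exactly $\rho^{-(1+2\gamma)}(\tfrac{4R}{4+R^2})^2 \cdot 2Q_0$ and not merely proportional to it. A secondary subtlety is justifying the boundary asymptotics $V = v + o(1)$ with enough regularity that $\lim_{\rho\to 0}\rho^{1-2\gamma}\partial_\rho V$ exists, which relies on the expansion $\bar g^* = d\rho^2(1+O(\rho^{2\gamma})) + g_c(1+O(\rho^{2\gamma}))$ and a Frobenius/barrier argument near $\{\rho = 0\}$.
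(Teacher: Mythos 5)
Your plan has the right conceptual ingredients and is in the same spirit as the proof in \cite{Ao-Chan-DelaTorre-Fontelos-Gonzalez-Wei} (which in turn builds on the $Q_0=0$ case in \cite{DelaTorre-Gonzalez} via the scattering-theoretic / Chang--Gonz\'alez--Case--Chang framework): conjugate by $e^{Q_0 t}$, switch to the adapted defining function, and read off the Dirichlet-to-Neumann map. However, the crux is asserted rather than proved. Conjugating $-\divergence_{\bar g^*}(\rho^{1-2\gamma}\nabla_{\bar g^*}\cdot)$ by $e^{Q_0 t}$ produces, in addition to the desired cross term in $\partial_t V$, the zeroth-order terms $Q_0\,V\,\divergence_{\bar g^*}(\rho^{1-2\gamma}\nabla_{\bar g^*}t)-Q_0^2\,\rho^{1-2\gamma}\,|\nabla_{\bar g^*}t|^2\,V$. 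These do \emph{not} vanish for a generic defining function; they cancel against the curvature potential $E(\rho)V$ of the raw scattering extension precisely because $\rho$ is the $p$-dependent adapted defining function given by that particular hypergeometric formula, i.e.\ because $\rho^{(n-2\gamma)/2}$ (up to the conformal factor and $\alpha$) is the solution of the conjugated scattering ODE with constant boundary data. Checking this cancellation, pinning down the drift coefficient $\rho^{-(1+2\gamma)}\bigl(\tfrac{4R}{4+R^2}\bigr)^2\,2Q_0$, and identifying the Neumann constant as $A_{n,p,\gamma}$ (rather than $\Lambda_{n,\gamma}$) all come out of the same explicit special-function computation, which your write-up explicitly postpones. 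As it stands the proposal is a correct roadmap, not a proof.

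Two smaller remarks. First, the ordering ``pass to the adapted $\rho$, then conjugate'' is slightly off: the $\rho$ in the statement is already adapted to the conjugated operator $\tilde P_\gamma$ (its defining formula depends on $p$ through $\alpha$ and the hypergeometric parameters $\tfrac{\gamma}{p-1}$, $\tfrac{n-2\gamma}{2}-\tfrac{\gamma}{p-1}$), so the conjugation and the choice of defining function have to be done compatibly rather than sequentially. Second, the existence/uniqueness argument via Lax--Milgram in the $\rho^{1-2\gamma}$-weighted space is fine, but one should note explicitly that the drift coefficient $\rho^{-(1+2\gamma)}\bigl(\tfrac{4R}{4+R^2}\bigr)^2$ is $t$-independent, so that $\int \rho^{-(1+2\gamma)}\bigl(\tfrac{4R}{4+R^2}\bigr)^2\,V\,\partial_t V\,dt=0$ and coercivity is genuinely preserved; ``harmless first-order perturbation'' is too quick since the weight $\rho^{-(1+2\gamma)}$ looks worse than $\rho^{1-2\gamma}$ until one uses $\bigl(\tfrac{4R}{4+R^2}\bigr)^2=R^2\bigl(1+O(R^2)\bigr)$ together with $\rho(R)=R\bigl(1+O(R^{2\gamma})\bigr)$.
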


\begin{remark}[\cite{DelaTorre-Gonzalez}]
If $v$ is a radial function on $M$, this is, $v=v(t)$, then the first equation in \eqref{divV*} decouples to \begin{equation}\label{equation-extension}
\partial_{\rho}\left(e_1(\rho)\rho^{1-2\gamma}\partial_{\rho}V\right)
+e_2(\rho)\rho^{1-2\gamma}\partial_{tt}V=-Q_0 F(\rho)\,\partial_t V,
\end{equation}
for some $F(\rho)\geq 0$ and some continuous functions $e_l(\rho)$ which are smooth for $\rho\in(0,\rho_0)$ and that satisfy
$$\displaystyle\lim_{\rho\to 0}e_l(\rho)=1, \quad l=1,2.$$

\end{remark}

For convenience of the reader, let us particularize this result for $Q_0=0$ in order to give a characterization for $P_\gamma w$ if $w=w(t)$:

\begin{prop}[\cite{DelaTorre-Gonzalez}]\label{prop:extension}
Let $w=w(t)$ be a smooth, radially symmetric  function on $M=\mathbb R\times\mathbb S^{n-1}$.
The extension problem
\begin{equation*}
\left\{\begin{array}{@{}r@{}l@{}l}
 \partial_{\rho}\left(e_1(\rho)\rho^{1-2\gamma}\partial_{\rho}W\right)
+e_2(\rho)\rho^{1-2\gamma}\partial_{tt}W&\,=0\quad &\text{in } \rho\in(0,\rho_0),t\in\mathbb R, \medskip\\
W|_{\rho=0}&\,=w\quad &\text{on }M,
\end{array}\right.
\end{equation*}
has a unique solution $W=W(\rho,t)$. Moreover, for its Neumann data,
\begin{equation*}
P_\gamma(w)=P_\gamma^{(0)}w=-\tilde d_\gamma \lim_{\rho \to 0}\rho^{1-2\gamma} \partial_{\rho} W+\Lambda_{n,\gamma}w.
\end{equation*}
\end{prop}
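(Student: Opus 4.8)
The plan is to derive Proposition \ref{prop:extension} as the special case $Q_0=0$ of Proposition \ref{prop:divV*}, which we may assume; the only work is to verify that when $Q_0=0$ and $w$ is radial the two extension problems coincide and that the decoupling asserted in the Remark holds with the stated normalization of $e_1,e_2$. First I would set $Q_0=0$ in \eqref{divV*}: the zeroth-order drift term $-\rho^{-(1+2\gamma)}(\tfrac{4R}{4+R^2})^2 2Q_0\,\partial_t V$ vanishes identically, so the equation reduces to the pure weighted-divergence equation $-\divergence_{\bar g^*}(\rho^{1-2\gamma}\nabla_{\bar g^*}V)=0$ on $(X,\bar g^*)$, with boundary value $V|_{\rho=0}=w$, and the Neumann formula becomes $\tilde P_\gamma(w)=P_\gamma(w)=-\tilde d_\gamma\lim_{\rho\to0}\rho^{1-2\gamma}\partial_\rho W+\Lambda_{n,\gamma}w$, since $A_{n,p,\gamma}=\Lambda_{n,\gamma}$ at the critical exponent. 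Existence and uniqueness of $V=W$ are inherited verbatim from Proposition \ref{prop:divV*}.

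Next I would carry out the decoupling. Since $w=w(t)$ depends on neither $\rho$ nor $\theta$, and the metric $\bar g^*=d\rho^2(1+O(\rho^{2\gamma}))+g_c(1+O(\rho^{2\gamma}))$ is a warped product over $(0,\rho_0)\times\mathbb R\times\mathbb S^{n-1}$ with $g_c=dt^2+d\theta^2$, the solution $W$ is $\theta$-independent by uniqueness (the problem is invariant under the isometry group $O(n)$ of the sphere factor). Writing out $\divergence_{\bar g^*}(\rho^{1-2\gamma}\nabla_{\bar g^*}W)$ in the coordinates $(\rho,t)$ and collecting the metric coefficients into the functions $e_1(\rho),e_2(\rho)$ — that is, $e_1$ absorbs the $(\tfrac{\rho}{R})$-factors and warping weights multiplying $\partial_\rho$, and $e_2$ those multiplying $\partial_{tt}$ — produces exactly equation \eqref{equation-extension} with $Q_0=0$, namely $\partial_\rho(e_1(\rho)\rho^{1-2\gamma}\partial_\rho W)+e_2(\rho)\rho^{1-2\gamma}\partial_{tt}W=0$. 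The normalization $\lim_{\rho\to0}e_l(\rho)=1$ follows from $\bar g^*=d\rho^2+g_c+O(\rho^{2\gamma})$ near the conformal infinity, i.e.\ from the asymptotic expansions of $\rho(R)$ and of $\bar g^*$ recorded just before Proposition \ref{prop:divV*}; smoothness of $e_l$ on $(0,\rho_0)$ is immediate from smoothness of the metric there, the only delicate point being the apparent singularity at $R=2$ (equivalently $\rho=\rho_0$), which by the standing convention behaves like the origin in polar coordinates and is handled as in \cite{DelaTorre-Gonzalez}.

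The main obstacle is not conceptual but bookkeeping: one must track how the conformal factor $(\tfrac{\rho}{R})^2$ relating $\bar g$ and $\bar g^*$, together with the warping factors $(1\pm\tfrac{R^2}{4})^2$ and the change of variable $R\mapsto\rho$ (for which only an implicit formula is available), combine so that the $\rho^{1-2\gamma}$ weight emerges cleanly and the residual $\rho$-dependence is packaged into a single pair $e_1,e_2$ with limit $1$. The cleanest route is to invoke the computation already done for general $Q_0$ in Proposition \ref{prop:divV*} and the Remark — since $Q_0=0$ merely kills one term — rather than redo the divergence expansion from scratch; this reduces the proof to the observation that the hypotheses and conclusions of the cited results specialize correctly, which is routine.
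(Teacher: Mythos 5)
Your proposal is correct and follows exactly the route the paper itself sets up: the text immediately before Proposition \ref{prop:extension} announces it as the particularization of Proposition \ref{prop:divV*} to $Q_0=0$, and the Remark already records the decoupling \eqref{equation-extension} for radial data, so specializing and noting $A_{n,p,\gamma}=\Lambda_{n,\gamma}$ when $p=\tfrac{n+2\gamma}{n-2\gamma}$ is precisely what is intended. Your additional remarks on $\theta$-independence by $O(n)$-invariance plus uniqueness, and on $\lim_{\rho\to0}e_l=1$ coming from the asymptotics of $\bar g^*$, are sound and fill in the bookkeeping the paper leaves to \cite{DelaTorre-Gonzalez}.
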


\section{Non-local ODE: existence and Hamiltonian identities}\label{section:Hamiltonian}

In the following, we fix $\gamma\in(0,1)$. We consider radially symmetric solutions $u=u(r)$ to the non-linear problem
\begin{equation*}
(-\Delta)^\gamma u=cu^p \quad\text{in }\mathbb R^n\setminus\{0\}.
\end{equation*}

\subsection{The critical case}

For this part, set
$$p=\frac{n+2\gamma}{n-2\gamma}.$$
Let $u=u(r)$ be a radially symmetric solution to
\begin{equation}\label{problem111}
(-\Delta)^\gamma u=\Lambda_{n,\gamma}u^{\frac{n+2\gamma}{n-2\gamma}}, \quad\text{in }\mathbb R^n\setminus\{0\},
\end{equation}
and set
\begin{equation*}
u=r^{-\frac{n-2\gamma}{2}}w,\quad r=e^{-t},
\end{equation*}
then from the results in Section \ref{section:conformal}, we know that this problem is equivalent to
\begin{equation}\label{problem1111}
P_\gamma^{(0)} w =\Lambda_{n,\gamma}w^{\frac{n+2\gamma}{n-2\gamma}}, \quad w=w(t),\quad t\in\mathbb R.
\end{equation}

The advantage of the $t$ variable over the original $r$ is that one can show the existence of a Hamiltonian  similar to the monotone quantities from \cite{Cabre-Sire1,Frank-Lenzmann-Silvestre}. However, in our case, it is conserved along the $t$-flow:

\begin{thm}[\cite{Ao-Chan-DelaTorre-Fontelos-Gonzalez-Wei}]\label{thctthamiltonian}
Let $w=w(t)$ be a solution to \eqref{problem1111} and set $W$ its extension from Proposition \ref{prop:extension}.
Then, the Hamiltonian quantity
\begin{equation}\label{Hamiltonian}
\begin{split}
H_\gamma[W](t)=&\frac{\Lambda_{n,\gamma}}{\tilde{d}_{\gamma}}
\left(-\frac{1}{2}w^2+\frac{1}{p+1}w^{p+1}\right)\\
&+\frac{1}{2}\int_{0}^{\rho_0}  {\rho}^{1-2\gamma}\left\{-e'_1(\rho)(\partial_{\rho}W)^2
+e'_2(\rho)(\partial_t W)^2\right\}\,d\rho
\end{split}
\end{equation}
is constant along trajectories. Here $e'_l(\rho)$, $l=1,2$, are two smooth positive functions in $(0,\rho_0)$, continuous up to the boundary, that satisfy
$\displaystyle\lim_{\rho\to 0}e'_l(\rho)=1$.
\end{thm}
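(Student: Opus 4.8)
The plan is to show that $\frac{d}{dt}H_\gamma[W](t)=0$ by differentiating the explicit expression \eqref{Hamiltonian} under the integral sign and using the extension equation \eqref{equation-extension} with $Q_0=0$. First I would differentiate the boundary term: its $t$-derivative is $\frac{\Lambda_{n,\gamma}}{\tilde d_\gamma}\big(-w w_t + w^p w_t\big) = \frac{\Lambda_{n,\gamma}}{\tilde d_\gamma} w_t\big(-w+w^p\big)$, and by the Neumann identity in Proposition \ref{prop:extension} together with the equation $P_\gamma^{(0)}w=\Lambda_{n,\gamma}w^p$ we have $-\tilde d_\gamma \lim_{\rho\to0}\rho^{1-2\gamma}\partial_\rho W = \Lambda_{n,\gamma}(w^p-w)$; hence this contributes $-w_t(0,t)\lim_{\rho\to0}\rho^{1-2\gamma}\partial_\rho W$ after identifying $W|_{\rho=0}=w$ and $\partial_t W|_{\rho=0}=w_t$.

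Next I would differentiate the bulk integral. Writing $W=W(\rho,t)$ we get
\begin{equation*}
\frac{d}{dt}\,\frac{1}{2}\int_0^{\rho_0}\rho^{1-2\gamma}\Big\{-e_1'(\rho)(\partial_\rho W)^2 + e_2'(\rho)(\partial_t W)^2\Big\}\,d\rho
= \int_0^{\rho_0}\rho^{1-2\gamma}\Big\{-e_1'(\rho)\,\partial_\rho W\,\partial_\rho W_t + e_2'(\rho)\,\partial_t W\,\partial_{tt}W\Big\}\,d\rho,
\end{equation*}
where $W_t=\partial_t W$. In the first term I integrate by parts in $\rho$, moving the $\partial_\rho$ off of $W_t$; the boundary contribution at $\rho=0$ produces exactly $+W_t(0,t)\lim_{\rho\to0}\rho^{1-2\gamma}e_1'(\rho)\partial_\rho W = W_t(0,t)\lim_{\rho\to0}\rho^{1-2\gamma}\partial_\rho W$ (using $e_1'(0)=1$), which cancels the boundary contribution from the first paragraph, while the boundary term at $\rho=\rho_0$ vanishes because of the behavior there (the apparent singularity at $R=2$, cf.\ the remark after Proposition \ref{prop:divV*}). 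The remaining interior integrand becomes $W_t\,\partial_\rho\big(e_1'(\rho)\rho^{1-2\gamma}\partial_\rho W\big) + e_2'(\rho)\rho^{1-2\gamma}W_t\,\partial_{tt}W$. The crux is that the extension equation \eqref{equation-extension} for $Q_0=0$ must be expressible, after possibly a further change of the defining function, in the \emph{self-adjoint} form $\partial_\rho(e_1'(\rho)\rho^{1-2\gamma}\partial_\rho W)+e_2'(\rho)\rho^{1-2\gamma}\partial_{tt}W=0$ with the same coefficients $e_1',e_2'$ appearing in the Hamiltonian; granting this, the interior integrand is identically zero and $\frac{d}{dt}H_\gamma[W]=0$.

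\textbf{Main obstacle.} The genuinely delicate point is the bookkeeping of the two coefficient pairs: Proposition \ref{prop:extension} gives the extension equation with coefficients $e_1,e_2$, whereas the Hamiltonian \eqref{Hamiltonian} is stated with \emph{primed} coefficients $e_1',e_2'$. I expect these are related by absorbing a factor into $\rho^{1-2\gamma}$ (a reparametrization of $\rho$ making the operator formally self-adjoint), and one must check that (i) such a reduction to self-adjoint form is possible with coefficients still tending to $1$ and smooth/positive on $(0,\rho_0)$, and (ii) the same $\rho^{1-2\gamma}$ weight and the same primed coefficients govern both the energy density and the PDE, so that the integration-by-parts cancellation is exact rather than up to lower-order terms. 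The other technical care is justifying differentiation under the integral and the vanishing of the $\rho=\rho_0$ boundary term, which follows from the regularity of $W$ away from $\rho=0$ and the fact that $\rho=\rho_0$ corresponds to the smooth interior point $r=0$ of the original problem once the singularity has been shifted into the metric (as explained after \eqref{conformal-property}). Once the self-adjoint normalization is pinned down, the computation closes cleanly; the conservation (rather than mere monotonicity) is precisely the payoff of the problem being autonomous in the $t$ variable at the critical exponent.
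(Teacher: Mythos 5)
Your proposal is correct and uses essentially the same mechanism as the paper's (implicit) proof; note that the proof of Lemma~\ref{lemma:Wronskian1} is explicitly said to be "the same as the one from Theorem~\ref{thctthamiltonian}," and it runs through exactly the differentiate-then-integrate-by-parts-then-invoke-Neumann-data chain you describe, just with the PDE substituted for $\partial_{tt}W$ before the $\rho$-integration by parts rather than after. The one place where your account over-complicates matters is the "main obstacle" you flag: there is no need for a further reparametrization to reach a self-adjoint form, because equation \eqref{equation-extension} with $Q_0=0$ is \emph{already} in divergence form $\partial_\rho(e_1\rho^{1-2\gamma}\partial_\rho W)+e_2\rho^{1-2\gamma}\partial_{tt}W=0$, and the primed coefficients $e_1',e_2'$ in the Hamiltonian are simply the same functions $e_1,e_2$ (the prime is a notational artifact inherited from the reference \cite{Ao-Chan-DelaTorre-Fontelos-Gonzalez-Wei}; indeed the Wro\'nskian of Lemma~\ref{lemma:Wronskian1}, defined via the unprimed $e_2$, is proved conserved by "the same" argument). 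Testing the PDE against $\partial_t W$, integrating by parts in $\rho$, discarding the $\rho=\rho_0$ contribution (since $\rho_0$ corresponds to the smooth interior point $R=2$ where $\partial_\rho W$ vanishes), and identifying $\lim_{\rho\to0}\rho^{1-2\gamma}\partial_\rho W$ with $-\tfrac{\Lambda_{n,\gamma}}{\tilde d_\gamma}(w^p-w)$ via Proposition~\ref{prop:extension} and \eqref{problem1111} then gives precisely $\partial_t H_\gamma[W]=0$ with $e_l'=e_l$; no hidden renormalization of the defining function is required.
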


The fact that such a Hamiltonian quantity exists
 suggests that a non-local ODE should have a similar behavior as in the local second-order case, where one can draw a phase portrait. However, one cannot use standard ODE theory to prove existence and uniqueness of solutions.

In any case, we have two types of solutions to \eqref{problem1111} in addition to the constant solution $w\equiv 1$: first we find an explicit homoclinic, corresponding to the standard bubble
\begin{equation*}
u_\infty(r)=c\left(\frac{1}{1+r^2}\right)^{\frac{n-2\gamma}{2}}.
\end{equation*}
Indeed:

\begin{prop}[\cite{DelaTorre-Gonzalez}]\label{bubble}
The positive function
\begin{equation*}
w_\infty(t)=C(\cosh t)^{-\frac{n-2\gamma}{2}},\quad \text{ for}\quad C=
\left(\Lambda_{n,\gamma}\frac{\Gamma(\frac{n}{2}-\gamma)}{\Gamma(\frac{n}{2}+\gamma)}\right)^{-\frac{n-2\gamma}
{4\gamma}}>1,
\end{equation*}
is a smooth solution to \eqref{problem1111}.
\end{prop}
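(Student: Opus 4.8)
The plan is to deduce the statement from the conformal covariance of $P_\gamma$ recorded in \eqref{conformal-property}, together with the classical fact that the standard bubble solves the fractional Yamabe equation on all of $\mathbb{R}^n$. Recall from Section~\ref{section:conformal} that under the substitution $u=r^{-\frac{n-2\gamma}{2}}w$, $r=e^{-t}$, equation \eqref{problem111} on $\mathbb R^n\setminus\{0\}$ is equivalent to \eqref{problem1111}: by \eqref{conformal-property} and the identity $\big(r^{-\frac{n-2\gamma}{2}}\big)^{\frac{n+2\gamma}{n-2\gamma}}=r^{-\frac{n+2\gamma}{2}}$, all powers of $r$ cancel (this is where criticality enters), so $(-\Delta)^\gamma u=\Lambda_{n,\gamma}u^{\frac{n+2\gamma}{n-2\gamma}}$ becomes $P_\gamma^{(0)}w=\Lambda_{n,\gamma}w^{\frac{n+2\gamma}{n-2\gamma}}$. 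Hence it suffices to exhibit a positive solution of \eqref{problem111} of the form $u_\infty(r)=c\,(1+r^2)^{-\frac{n-2\gamma}{2}}$ and to read off the corresponding $w$.

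Next I would recall the well-known value of the fractional Laplacian on the Aubin--Talenti profile: for $\gamma\in(0,1)$ and $n>2\gamma$,
\[
(-\Delta)^\gamma\big((1+|x|^2)^{-\frac{n-2\gamma}{2}}\big)
=2^{2\gamma}\,\frac{\Gamma\big(\frac{n+2\gamma}{2}\big)}{\Gamma\big(\frac{n-2\gamma}{2}\big)}\,(1+|x|^2)^{-\frac{n+2\gamma}{2}},
\]
which may be quoted, e.g., from the extremal property of these functions for the fractional Hardy--Littlewood--Sobolev inequality (or obtained by a direct residue computation in the Caffarelli--Silvestre extension). Since $u_\infty^{\frac{n+2\gamma}{n-2\gamma}}=c^{\frac{n+2\gamma}{n-2\gamma}}(1+r^2)^{-\frac{n+2\gamma}{2}}$, the function $u_\infty$ solves \eqref{problem111} exactly when $c\,2^{2\gamma}\,\Gamma\big(\tfrac{n+2\gamma}{2}\big)/\Gamma\big(\tfrac{n-2\gamma}{2}\big)=\Lambda_{n,\gamma}\,c^{\frac{n+2\gamma}{n-2\gamma}}$, i.e., using $\frac{n+2\gamma}{n-2\gamma}-1=\frac{4\gamma}{n-2\gamma}$,
\[
c=\Big(2^{2\gamma}\,\Lambda_{n,\gamma}^{-1}\,\tfrac{\Gamma(\frac{n+2\gamma}{2})}{\Gamma(\frac{n-2\gamma}{2})}\Big)^{\frac{n-2\gamma}{4\gamma}}.
\]

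Finally I would transfer $u_\infty$ to the cylinder. With $r=e^{-t}$ one has $\frac{r}{1+r^2}=\frac{1}{e^t+e^{-t}}=\frac{1}{2\cosh t}$, so in the notation of \eqref{uw},
\[
w_\infty(t)=r^{\frac{n-2\gamma}{2}}u_\infty(r)=c\Big(\tfrac{r}{1+r^2}\Big)^{\frac{n-2\gamma}{2}}=c\,2^{-\frac{n-2\gamma}{2}}(\cosh t)^{-\frac{n-2\gamma}{2}},
\]
which is a smooth, positive, even function on $\mathbb R$ and, by the equivalence above, a solution of \eqref{problem1111}. It remains to check that $C:=c\,2^{-\frac{n-2\gamma}{2}}$ is the constant in the statement: writing $2^{-\frac{n-2\gamma}{2}}=(2^{2\gamma})^{-\frac{n-2\gamma}{4\gamma}}$, the factor $2^{2\gamma}$ inside $c$ cancels, giving $C=\big(\Lambda_{n,\gamma}^{-1}\,\Gamma(\tfrac{n+2\gamma}{2})/\Gamma(\tfrac{n-2\gamma}{2})\big)^{\frac{n-2\gamma}{4\gamma}}=\big(\Lambda_{n,\gamma}\,\Gamma(\tfrac{n}{2}-\gamma)/\Gamma(\tfrac{n}{2}+\gamma)\big)^{-\frac{n-2\gamma}{4\gamma}}$, as claimed. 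The inequality $C>1$ then follows from the Legendre duplication formula applied to $\Lambda_{n,\gamma}$ in \eqref{Hardy-constant} together with monotonicity of the digamma function, which yields $\Lambda_{n,\gamma}\,\Gamma(\tfrac n2-\gamma)/\Gamma(\tfrac n2+\gamma)<1$.

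Alternatively, one can verify the claim directly on the cylinder using Proposition~\ref{prop:symbol} with $Q_0=0$ and $m=0$ (so that $\sqrt{(\tfrac n2-1)^2+\mu_0}=\tfrac n2-1$, $A_0=\tfrac{n+2\gamma}{4}$, $B_0=\tfrac{n-2\gamma}{4}$), combined with the classical Fourier identity $\int_{\mathbb R}e^{-i\xi t}(\cosh t)^{-\nu}\,dt=\tfrac{2^{\nu-1}}{\Gamma(\nu)}\,\Gamma(\tfrac{\nu+i\xi}{2})\Gamma(\tfrac{\nu-i\xi}{2})$: applying $\tilde\Theta^{(0)}_\gamma$ to $f=(\cosh t)^{-\frac{n-2\gamma}{2}}$, the Gamma factors with argument $\tfrac{n-2\gamma}{4}$ cancel against the denominator of the symbol, leaving $P_\gamma^{(0)}f=\tfrac{\Gamma(\frac{n+2\gamma}{2})}{\Gamma(\frac{n-2\gamma}{2})}(\cosh t)^{-\frac{n+2\gamma}{2}}$, and one fixes $C$ exactly as above. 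Either way the only real obstacle is bookkeeping rather than ideas: one must cite (not reprove here) the value of $(-\Delta)^\gamma$ on the Aubin--Talenti profile — equivalently the Fourier transform of $(\cosh t)^{-\nu}$ — and then track the powers of $2$ carefully so that the normalizing constant emerges precisely in the stated form.
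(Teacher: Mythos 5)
Your proof is correct, and both of the routes you offer work. The first route (pull back the Aubin--Talenti bubble via the conformal covariance \eqref{conformal-property} and track the constant) is logically complete once one grants the identity $(-\Delta)^\gamma(1+|x|^2)^{-\frac{n-2\gamma}{2}}=2^{2\gamma}\,\tfrac{\Gamma(\frac n2+\gamma)}{\Gamma(\frac n2-\gamma)}(1+|x|^2)^{-\frac{n+2\gamma}{2}}$; your bookkeeping of the powers of $2$ through $\frac{r}{1+r^2}=\frac{1}{2\cosh t}$ and the cancellation of the $2^{2\gamma}$ inside $c$ against $2^{-\frac{n-2\gamma}{2}}=(2^{2\gamma})^{-\frac{n-2\gamma}{4\gamma}}$ is exactly right, and the argument for $C>1$ via the Legendre duplication formula applied to $\Lambda_{n,\gamma}$, reducing to the monotonicity of $\psi(x+\tfrac12)-\psi(x)$, is a clean way to see it. The second route, which is what the cited reference \cite{DelaTorre-Gonzalez} actually does, is even more self-contained within the paper's framework: one applies the explicit symbol $\Theta_\gamma^{(0)}(\xi)$ from Proposition~\ref{prop:symbol} to the Fourier transform $\widehat{(\cosh t)^{-\nu}}(\xi)\propto \frac{2^{\nu-1}}{\Gamma(\nu)}\Gamma\bigl(\tfrac{\nu+i\xi}{2}\bigr)\Gamma\bigl(\tfrac{\nu-i\xi}{2}\bigr)$, and with $\nu=\tfrac{n-2\gamma}{2}$ the $\Gamma(B_0\pm\tfrac{i\xi}{2})$ factors cancel against the denominator of the symbol, leaving $P_\gamma^{(0)}(\cosh t)^{-\frac{n-2\gamma}{2}}=\tfrac{\Gamma(\frac{n+2\gamma}{2})}{\Gamma(\frac{n-2\gamma}{2})}(\cosh t)^{-\frac{n+2\gamma}{2}}$, after which matching constants yields $C$. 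The cylinder computation avoids importing the $\mathbb R^n$ bubble identity and stays entirely within the Fourier-symbol machinery that the paper sets up; the pullback argument is slightly shorter if one already has the Euclidean identity at hand. Either is an acceptable proof.
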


Second, we have periodic solutions (these are known as \emph{Delaunay} solutions). The proof is  variational and we refer to \cite{DelaTorre-delPino-Gonzalez-Wei} for further details:

\begin{thm}[\cite{DelaTorre-delPino-Gonzalez-Wei}]
Let $n>2+2\gamma$. There exists $L_0$ (the minimal period) such that for any $L>L_0$, there exists a periodic solution $w_L=w_L(t)$ to \eqref{problem1111} satisfying $w_L(t+L)=w_L(t)$.
\end{thm}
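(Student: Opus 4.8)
The plan is to construct the periodic Delaunay solutions variationally, exploiting the autonomous structure of equation \eqref{problem1111} in the $t$-variable together with the extension characterization of $P_\gamma^{(0)}$ from Proposition \ref{prop:extension}. Fix a period $L>0$ and work on the cylinder $C_L=(\mathbb R/L\mathbb Z)\times(0,\rho_0)$; for an $L$-periodic function $w=w(t)$ let $W$ be its extension. The natural energy functional is
\begin{equation*}
J_L[W]=\frac{1}{2}\int_0^{\rho_0}\!\!\int_0^L \rho^{1-2\gamma}\big(e_1(\rho)(\partial_\rho W)^2+e_2(\rho)(\partial_t W)^2\big)\,dt\,d\rho
+\frac{\Lambda_{n,\gamma}}{2\tilde d_\gamma}\int_0^L w^2\,dt
-\frac{\Lambda_{n,\gamma}}{(p+1)\tilde d_\gamma}\int_0^L |w|^{p+1}\,dt,
\end{equation*}
whose Euler--Lagrange equation is exactly \eqref{problem1111} with $w=W|_{\rho=0}$. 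First I would set up the weighted Sobolev space $H^1_L$ of $L$-periodic extensions with the norm coming from the quadratic part, record the trace inequality $\|w\|_{L^{q}(0,L)}\lesssim\|W\|_{H^1_L}$ for $q$ up to the critical exponent $\frac{2n}{n-2\gamma}=p+1$ (this is the cylindrical analogue of the fractional Sobolev embedding, and follows from the extension), and note that the constant $w\equiv1$ is a critical point.

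The key step is a mountain-pass / constrained-minimization argument done relative to the constant solution. I would substitute $w=1+\phi$ and look for a nontrivial $L$-periodic $\phi$; the quadratic form of $J_L$ at $w\equiv1$ is governed by the linearized operator $P_\gamma^{(0)}-p\Lambda_{n,\gamma}$, whose symbol on the $k$-th Fourier mode $e^{2\pi i k t/L}$ is $\tilde\Theta^{(0)}_\gamma(2\pi k/L)-p\Lambda_{n,\gamma}$. Because $\tilde\Theta^{(0)}_\gamma(\xi)$ is even, real-analytic and strictly increasing in $|\xi|$ with $\tilde\Theta^{(0)}_\gamma(0)=\Lambda_{n,\gamma}<p\Lambda_{n,\gamma}$ (using $p>1$) and $\tilde\Theta^{(0)}_\gamma(\xi)\to\infty$, there is a unique $\xi_1>0$ with $\tilde\Theta^{(0)}_\gamma(\xi_1)=p\Lambda_{n,\gamma}$; set $L_0=2\pi/\xi_1$. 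For $L>L_0$ the first nonzero mode $k=\pm1$ is linearly unstable, so $w\equiv1$ is a strict local maximum of $J_L$ in that direction while $J_L$ stays coercive on the mean-zero complement of the unstable modes — equivalently, restricting to functions with prescribed average $\bar w$ and minimizing, or minimizing $J_L$ on the Nehari-type manifold, produces a nonconstant critical point. Concretely I would minimize $J_L$ over the set $\{W\in H^1_L:\int_0^L w\,dt = \bar w,\ w\ \text{not constant}\}$ for a suitable range of $\bar w$, or run a linking argument between $w\equiv1$ and large multiples of $\cos(2\pi t/L)$, using that for $L>L_0$ the energy decreases in the unstable direction, to get a critical value strictly below $J_L[1]$ (hence a nonconstant solution).

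To finish I would: (i) verify the Palais--Smale condition — the subcritical-looking obstruction is that $p+1$ is the \emph{critical} Sobolev exponent, so compactness can fail by bubbling; this is handled because we work on the compact cylinder $(\mathbb R/L\mathbb Z)\times\mathbb S^{n-1}$ restricted to radial (i.e. $\theta$-independent) functions, where the only possible concentration is at a point of the circle and can be excluded by comparing the mountain-pass level with the threshold $J_L[1]$ plus the energy of the standard bubble $w_\infty$, the latter being strictly positive; (ii) show the minimizer/critical point is positive, via the strong maximum principle for the degenerate-elliptic extension operator in \eqref{equation-extension} applied to $|w|$ together with regularity bootstrap from Proposition \ref{prop:symbol}; (iii) show it is genuinely $L$-periodic and nonconstant, which is where the choice $L>L_0$ is used — for $L\le L_0$ every periodic solution near the constant is forced to be constant because the linearized operator is positive on mean-zero functions, whereas for $L>L_0$ the constructed solution has energy $<J_L[1]$ and hence is nonconstant; finally smoothness of $w$ follows from the standard regularity theory for $P_\gamma$ as in Section \ref{section:conformal}. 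The main obstacle is item (i): ruling out loss of compactness at the critical exponent, i.e. establishing the energy gap that confines the min-max level below the first bubbling threshold; everything else is a routine adaptation of the Caffarelli--Silvestre-type variational machinery to the weighted operator \eqref{equation-extension}. For the details of this variational construction we refer to \cite{DelaTorre-delPino-Gonzalez-Wei}.
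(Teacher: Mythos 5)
Your sketch follows the same variational route as the cited paper \cite{DelaTorre-delPino-Gonzalez-Wei}: work on the quotient cylinder $(\mathbb{R}/L\mathbb{Z})\times\mathbb{S}^{n-1}$ restricted to $\theta$-independent functions, use the extension from Proposition \ref{prop:extension} to express the energy, identify the minimal period $L_0$ from the symbol $\Theta^{(0)}_\gamma$ of the linearized operator at $w\equiv 1$, and extract a nonconstant critical point via minimization. The identification of $L_0$ through the relation $\Theta^{(0)}_\gamma(2\pi/L_0)=p\,\Lambda_{n,\gamma}$, using $\Theta^{(0)}_\gamma(0)=\Lambda_{n,\gamma}$ and monotonicity of the symbol, is correct and is the same bifurcation threshold as in the reference. (The reference minimizes the $L$-periodic Sobolev quotient $\int w\,P_\gamma^{(0)}w\,dt\big/\big(\int|w|^{p+1}dt\big)^{2/(p+1)}$ and shows the minimizer is nonconstant for $L>L_0$, rather than running a mountain pass around $w\equiv1$, but this is a cosmetic difference within the same strategy.)

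Where your proposal goes wrong is item (i), the compactness discussion. You treat $p+1=\frac{2n}{n-2\gamma}$ as a genuinely critical exponent and propose a bubbling analysis and an energy-gap argument to recover Palais--Smale. This is unnecessary and would in fact be the wrong framing: because the functions are $\theta$-independent, the trace space for the extension on $(0,\rho_0)\times(\mathbb{R}/L\mathbb{Z})$ is a one-dimensional fractional Sobolev space $H^\gamma(\mathbb{R}/L\mathbb{Z})$, and the exponent $\frac{2n}{n-2\gamma}$ is \emph{strictly subcritical} for the one-dimensional embedding whenever $n>1$ (indeed $\frac{2n}{n-2\gamma}<\frac{2}{1-2\gamma}$ for $\gamma<\frac12$, and the embedding is into every $L^q$ when $\gamma\geq\frac12$). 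Hence the trace embedding $H^\gamma(\mathbb{R}/L\mathbb{Z})\hookrightarrow L^{p+1}(\mathbb{R}/L\mathbb{Z})$ is compact, minimizing sequences converge strongly, and no comparison with a bubbling threshold is needed. Recognizing this subcriticality is precisely what makes the construction in \cite{DelaTorre-delPino-Gonzalez-Wei} clean, so you should replace your bubbling discussion with this observation. Finally, you never explain where the hypothesis $n>2+2\gamma$ enters; your spectral computation producing $\xi_1$ only requires $p>1$, so you should indicate which part of the argument (in the reference it appears in the comparison of $\Lambda_\gamma(L)$ with the constant solution and its limit as $L\to\infty$) actually uses this dimensional restriction.
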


\subsection{The subcritical problem}\label{subsection:subcritical}

Now we consider the subcritical problem
\begin{equation*}
p\in\Big(\frac{n}{n-2\gamma},\frac{n+2\gamma}{n-2\gamma}\Big).
\end{equation*}
Let $u=u(r)$ be a radially symmetric solution to
\begin{equation}\label{problem}
(-\Delta)^\gamma u=Au^{p} \quad\text{in }\mathbb R^n,
\end{equation}
and set
\begin{equation*}
u=r^{-\frac{2\gamma}{p-1}}v,\quad r=e^{-t},
\end{equation*}
then \eqref{problem} is equivalent to
\begin{equation}\label{problem11}
\tilde P_\gamma^{(0)} v =Av^{p},
\end{equation}
for some $v=v(t)$, $t\in\mathbb R$.

\begin{thm}\cite{Ao-Chan-DelaTorre-Fontelos-Gonzalez-Wei}
Let $v=v(t)$ be a solution to \eqref{problem11} and set $V$ its extension from Proposition \ref{prop:divV*}.
Then, the Hamiltonian quantity
$H_\gamma[V](t)$ given in \eqref{Hamiltonian} is non-increasing in $t$.
\end{thm}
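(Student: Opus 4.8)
The plan is to mimic the computation that establishes conservation of $H_\gamma[W]$ in the critical case (Theorem \ref{thctthamiltonian}), keeping careful track of the extra first-order term $-Q_0 F(\rho)\,\partial_t V$ that appears in the extension equation \eqref{equation-extension} when $Q_0>0$. First I would differentiate $H_\gamma[V](t)$ in $t$. The bulk term $\frac12\int_0^{\rho_0}\rho^{1-2\gamma}\{-e_1'(\rho)(\partial_\rho V)^2+e_2'(\rho)(\partial_t V)^2\}\,d\rho$ differentiates to $\int_0^{\rho_0}\rho^{1-2\gamma}\{-e_1'(\rho)\,\partial_\rho V\,\partial_\rho\partial_t V+e_2'(\rho)\,\partial_t V\,\partial_{tt}V\}\,d\rho$. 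In the first summand I integrate by parts in $\rho$, moving $\partial_\rho$ off of $\partial_\rho\partial_t V$; this produces a boundary contribution at $\rho=0$ and $\rho=\rho_0$ together with an interior term $\int_0^{\rho_0}\partial_\rho(e_1'(\rho)\rho^{1-2\gamma}\partial_\rho V)\,\partial_t V\,d\rho$. Here the functions $e_l'$ must be chosen (as stated) so that $e_1'=e_1$ up to the precise constant making the Neumann identity work; the key algebraic point, already used in the critical case, is that $\partial_\rho(e_1\rho^{1-2\gamma}\partial_\rho V)=-e_2\rho^{1-2\gamma}\partial_{tt}V-Q_0F(\rho)\partial_t V$ by \eqref{equation-extension}, so the $e_2$-terms cancel the second summand above and one is left with an interior term $+Q_0\int_0^{\rho_0}F(\rho)(\partial_t V)^2\,d\rho$ (up to sign), which has a definite sign.

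Next I would handle the boundary terms. At $\rho=\rho_0$ (the "apparent singularity" corresponding to the origin), the prescribed vanishing/regularity of $V$ there kills the contribution, exactly as in Theorem \ref{thctthamiltonian}. At $\rho=0$ one uses the Neumann identity from Proposition \ref{prop:divV*}, namely $\tilde P_\gamma(v)=-\tilde d_\gamma\lim_{\rho\to0}\rho^{1-2\gamma}\partial_\rho V+A\,v$, combined with the equation \eqref{problem11}, $\tilde P_\gamma^{(0)}v=Av^p$, to convert $\lim_{\rho\to0}\rho^{1-2\gamma}\partial_\rho V$ into $\frac{1}{\tilde d_\gamma}(Av-Av^p)$; multiplying by $\partial_t V|_{\rho=0}=w'(t)$ (here $v=w$ on $M$) and using $\lim_{\rho\to0}e_l'=1$, this boundary term is exactly $-\frac{d}{dt}$ of the local potential part $\frac{\Lambda_{n,\gamma}}{\tilde d_\gamma}(-\frac12 v^2+\frac1{p+1}v^{p+1})$ — up to the fact that in the subcritical regime the coefficient is $A$ rather than $\Lambda_{n,\gamma}$, which only rescales that first line of \eqref{Hamiltonian}. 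Putting the pieces together, $\frac{d}{dt}H_\gamma[V](t)=-cQ_0\int_0^{\rho_0}F(\rho)(\partial_t V)^2\,d\rho$ for a positive constant $c$, and since $Q_0>0$ and $F\ge0$, this is $\le0$, giving monotonicity.

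The step I expect to be the main obstacle is the rigorous justification of the integration by parts at the endpoint $\rho=0$: the weight $\rho^{1-2\gamma}$ is degenerate (for $\gamma<1/2$) or singular (for $\gamma>1/2$), and $\partial_\rho V$ generically blows up like $\rho^{-1+2\gamma}$ times a constant plus a smoother remainder, so one must check that the product $\rho^{1-2\gamma}\partial_\rho V\,\partial_\rho\partial_t V$ has a finite, correctly-identified limit and that no hidden boundary term is dropped. This requires the fine asymptotic expansion of $V$ near $\rho=0$ — precisely the regularity theory underlying Proposition \ref{prop:divV*} — together with the expansions $e_l(\rho)=1+O(\rho^{2\gamma})$, $e_l'(\rho)=1+O(\rho^{2\gamma})$ to show the $O(\rho^{2\gamma})$ corrections contribute nothing in the limit. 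A secondary technical point is verifying that $F(\rho)$, whose explicit form comes from $\rho^{-(1+2\gamma)}(\tfrac{4R}{4+R^2})^2$ re-expressed in the $\rho$ variable, is genuinely integrable against $(\partial_t V)^2$ near both endpoints; this follows from the stated bound $F\ge0$ together with the decay of $\partial_t V$ inherited from the decay of $v'(t)$, but it should be spelled out. Once these asymptotic matters are settled, the identity is purely a bookkeeping exercise identical in structure to the critical case treated in \cite{Ao-Chan-DelaTorre-Fontelos-Gonzalez-Wei}.
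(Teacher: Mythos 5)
Your proposal is correct and follows essentially the approach implicit in the paper (cf.\ the proof of Lemma \ref{lemma:Wronskian1}, which Theorem \ref{thctthamiltonian} is patterned after): differentiate $H_\gamma[V]$ in $t$, integrate by parts in $\rho$ so that the extension equation \eqref{equation-extension} (with $e_l'=e_l$, not merely up to a constant) cancels the interior terms except for the drift $-Q_0\int_0^{\rho_0}F(\rho)(\partial_t V)^2\,d\rho\le 0$, and use the Neumann condition plus \eqref{problem11} (with the coefficient understood as $A_{n,p,\gamma}$, as you rightly flag) to match the $\rho\to0$ boundary term with the time-derivative of the potential part. The technical concerns you raise about the degenerate weight near $\rho=0$ and the $\rho_0$-endpoint are real but are exactly the ones handled by the regularity theory behind Proposition \ref{prop:divV*}.
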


The study of \eqref{problem} (or equivalently, \eqref{problem11}) greatly depends on its linearized equation around the radial singular solution $u_0(r)=r^{-\frac{2\gamma}{p-1}}$, which involves the Hardy operator
\begin{equation*}
\label{L-kappa}L_\kappa:=(-\Delta)^\gamma-\frac{\kappa}{r^{2\gamma}},\quad \kappa=pA.
\end{equation*}
To study stability we need to understand the value of such $\kappa$. Note that the solution of \eqref{eq:stability-threshold} is completely characterized in \cite{Luo-Wei-Zou}. In particular, on the interval $\left(\frac{n}{n-2\gamma},\frac{n+2\gamma}{n-2\gamma}\right)$, \eqref{eq:stability-threshold} holds if and only if $p\leq p_1$, where $p_1$ is the unique real root of the equation $pA=\Lambda_{n,\gamma}$ on this interval.\footnote{
Remark that $A$ is a quotient of Gamma functions depending on $p$, while the Hardy constant depends only on $n$ and $\gamma$.
}
\begin{defi}\label{defi:stable}
We say that one is in the \emph{stable case} if
\[
p\in\left(\tfrac{n}{n-2\gamma},p_1\right],
\]
where the singular solution $r^{-\frac{2\gamma}{p-1}}$ is \emph{stable}. Similarly, one is in the \emph{unstable case} if
\[
p\in\left(p_1,\tfrac{n+2\gamma}{n-2\gamma}\right),
\]
where $r^{-\frac{2\gamma}{p-1}}$ is \emph{unstable}.
\end{defi}

\medskip

Existence of a fast decaying solution has been proved, in the stable case, in \cite{Ao-Chan-Gonzalez-Wei} and, in the unstable case, in \cite{Ao-Chan-DelaTorre-Fontelos-Gonzalez-Wei}. We summarize these results in the following theorem:

\begin{thm}[\cite{Ao-Chan-DelaTorre-Fontelos-Gonzalez-Wei}]\label{existence}
For any $\epsilon\in(0,\infty)$ there exists a fast-decaying, radially symmetric, entire singular solution $u_\epsilon$ of \eqref{problem} such that
\begin{equation*}
u_\epsilon(r)
=(1+o(1))
\begin{cases}
r^{-\frac{2\gamma}{p-1}}&\quad\text{as}~r\to0,\\
\epsilon r^{-(n-2\gamma)}&\quad\text{as}~r\to\infty.
\end{cases}
\end{equation*}
\end{thm}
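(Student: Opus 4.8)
The plan is to construct $u_\epsilon$ by a gluing argument in the logarithmic variable $t=-\log r$, matching the singular solution $u_0(r)=r^{-2\gamma/(p-1)}$ near $r=0$ with the decay profile $\epsilon r^{-(n-2\gamma)}$ near $r=\infty$. After the change of variables $u=r^{-2\gamma/(p-1)}v$, the equation becomes $\tilde P_\gamma^{(0)} v = A v^p$, and we seek a positive solution $v=v(t)$ that converges to the constant $1$ as $t\to+\infty$ (i.e. $r\to 0$) and decays exponentially like $c_\epsilon e^{-(n-2\gamma-\frac{2\gamma}{p-1})|t|}$ as $t\to-\infty$ (i.e. $r\to\infty$), so that $u_\epsilon\sim \epsilon r^{-(n-2\gamma)}$. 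Equivalently, one can work directly with the extension problem of Proposition \ref{prop:divV*}, which turns the non-local equation into a degenerate elliptic PDE on $X^*$, and use the Hamiltonian from the subcritical theorem above as a Lyapunov function to control the trajectory.

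The key steps, in order, would be: (i) Linear analysis: study the operator $L_\kappa=(-\Delta)^\gamma - \kappa r^{-2\gamma}$ with $\kappa = pA$, and in particular its indicial roots, which govern the admissible asymptotics at $r\to0$ and $r\to\infty$. One needs that in both the stable and unstable regimes the indicial roots allow the behaviors $r^{-2\gamma/(p-1)}$ at the origin and $r^{-(n-2\gamma)}$ at infinity, and one needs invertibility of $L_\kappa$ (equivalently of $\tilde P_\gamma^{(0)}$ linearized at $v\equiv 1$) in suitably weighted Hölder or $L^2$ spaces between these roots — this is exactly the Green's-function/variation-of-constants machinery referenced from \cite{Ao-Chan-DelaTorre-Fontelos-Gonzalez-Wei}. (ii) Construction of an approximate solution $\bar v_\epsilon$: interpolate between $v\equiv1$ for $t\ge t_\epsilon$ and a rescaled fast-decaying tail for $t\le t_\epsilon$, where $t_\epsilon\to+\infty$ encodes the parameter $\epsilon$; compute that the error $E_\epsilon := \tilde P_\gamma^{(0)}\bar v_\epsilon - A\bar v_\epsilon^p$ is small in the chosen weighted norm, with smallness quantified by a power of $\epsilon$. (iii) Fixed-point/implicit-function step: write $v=\bar v_\epsilon+\phi$, reduce to $L_*\phi = -E_\epsilon + Q(\phi)$ where $Q$ collects quadratic-and-higher terms, invert $L_*$ using the linear theory of step (i) (taking care of the non-locality, i.e. the coupling in the ODE system of Corollary \ref{cor:ODEsystem}), and solve by contraction mapping in a small ball. (iv) Positivity and asymptotics: a maximum-principle or barrier argument (using monotonicity of the Hamiltonian $H_\gamma[V]$ to exclude sign changes and to pin down the limits at $t\to\pm\infty$) to upgrade $v$ to a positive entire solution with the stated precise asymptotics; translating back via $u=r^{-2\gamma/(p-1)}v$ gives $u_\epsilon$.

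The main obstacle I expect is step (i) combined with the non-locality in step (iii): unlike the local case, $\tilde P_\gamma^{(0)}$ does not admit a clean two-dimensional solution space for its kernel, so the "variation of constants" produces an infinite sum over indicial roots, and one must show the resulting inverse operator is bounded on the weighted space uniformly in $\epsilon$, with the weight chosen so that neither the origin-asymptotics $r^{-2\gamma/(p-1)}$ nor the decay-asymptotics $r^{-(n-2\gamma)}$ coincides with an indicial root (otherwise logarithmic terms appear and the scheme degenerates). In the unstable case ($p>p_1$) there is the extra subtlety that $L_\kappa$ is no longer nonnegative, so one cannot use a coercivity/energy argument directly; instead one must locate the relevant indicial roots on the correct side of the weight and argue invertibility via the explicit symbol $\tilde\Theta^{(m)}_\gamma$, which is where the proof in \cite{Ao-Chan-DelaTorre-Fontelos-Gonzalez-Wei} does the real work. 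A secondary difficulty is making the matching at $t=t_\epsilon$ compatible with the non-local operator, since $\tilde P_\gamma^{(0)}$ sees the whole profile at once; one handles this by cutting off in a way adapted to the kernel decay rates listed in Proposition \ref{prop:symbol} so that the tails of the convolution against the "wrong" region are controlled.
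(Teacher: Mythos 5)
The paper under review does \emph{not} prove Theorem~\ref{existence}; it explicitly attributes the stable case to \cite{Ao-Chan-Gonzalez-Wei} and the unstable case to \cite{Ao-Chan-DelaTorre-Fontelos-Gonzalez-Wei}, and then simply sets $u_*:=u_1$ as a given building block for the rest of the paper. So there is no in-paper proof to compare against; what follows is an assessment of your outline against the referenced construction.

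Your overall architecture is consistent with the approach of those references: pass to the $t=-\log r$ variable so that the two target asymptotics become exponential rates at $t\to\pm\infty$; develop the linear theory for $L_\kappa$ (indicial roots, Green's function, and most importantly injectivity/surjectivity in doubly-weighted spaces whose weights sit strictly between indicial roots at both ends); build an approximate solution transitioning between the constant $v\equiv 1$ at $t\to+\infty$ and the fast-decaying exponential $e^{(\frac{n-2\gamma}{2}-Q_0)t}$ at $t\to-\infty$; and close with a contraction mapping. You also put your finger correctly on the two genuine difficulties: the infinite set of indicial roots replacing the usual two-dimensional solution space, and the loss of coercivity of $L_\kappa$ in the unstable regime, which forces the invertibility argument to go through the explicit Fourier symbol and the structure of its complex poles rather than an energy estimate.

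Two caveats. First, the $\epsilon$-dependence via a family of transition points $t_\epsilon$ is redundant: the equation is invariant under the scaling $u\mapsto\lambda^{\frac{2\gamma}{p-1}}u(\lambda\cdot)$, which preserves the near-origin profile $r^{-\frac{2\gamma}{p-1}}$ and rescales the coefficient of $r^{-(n-2\gamma)}$ at infinity by $\lambda^{\frac{2\gamma}{p-1}-(n-2\gamma)}$; since this exponent is negative for $p>\frac{n}{n-2\gamma}$, a single fast-decaying solution generates the whole one-parameter family $\{u_\epsilon\}_{\epsilon>0}$ for free. Second, step (iv) as written overstates the role of the monotone Hamiltonian: in the subcritical setting the Hamiltonian inequality of Section~\ref{section:Hamiltonian} is a rigidity tool (useful for excluding certain limit behaviors or proving uniqueness), not a Lyapunov function that one integrates to produce a trajectory. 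The precise asymptotics $u_\epsilon=(1+o(1))r^{-\frac{2\gamma}{p-1}}$ and $=(1+o(1))\epsilon r^{-(n-2\gamma)}$ come out of the linear theory --- essentially the Frobenius-type expansion of Proposition~\ref{prop:behavior-indicial} (as the paper itself later demonstrates in Proposition~\ref{prop:decay-barw}) --- and positivity is propagated through the fixed-point iteration, not recovered at the end by a barrier argument against the Hamiltonian.
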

For simplicity, denote by $u_*$ 
this fast decaying solution for $\epsilon=1$.
More precise asymptotics will be given in Propositions \ref{prop:decay-barw} and \ref{prop:unstable+}.

We remark that this  radially symmetric fast decaying solution can be used as the building block to construct the approximate solution in a gluing procedure. See \cite{Chan-DelaTorre} for a construction of a slow-growing solution (which serves the same purpose) at the threshold exponent $p=\frac{n}{n-2\gamma}$ for the case $\gamma=1$.

\section{Hardy type operators with fractional Laplacian}\label{section:Hardy}

Fix a constant $\kappa\in\mathbb R$.  Here we give a formula for the Green's function for the Hardy type operator in $\mathbb R^n$
\begin{equation*}\label{Hardy-operator}
 L_\kappa u:=(-\Delta)^{\gamma}u-\frac{\kappa}{r^{2\gamma}}u.
\end{equation*}
In the light of Section \ref{section:conformal}, it is useful to use conformal geometry to rewrite the fractional Laplacian on $\mathbb R^n$ in terms of the conformal fractional Laplacian $P_\gamma$ on the cylinder $M=\mathbb R\times\mathbb S^{n-1}$. Indeed,
from the conformal property \eqref{conformal-property}, setting
$$u=r^{-\frac{n-2\gamma}{2}}w, \quad w=w(t,\theta) \quad\text{for}\quad r={e^{-t}},$$ we have
\begin{equation*}\label{intertwining}
r^{\frac{n+2\gamma}{2}}L_\kappa u=P_\gamma w-\kappa w=:\mathcal Lw.
\end{equation*}
Our aim is to study invertibility properties for the equation
\begin{equation}\label{equation20}
\mathcal L w=h,\quad w=w(t,\theta),\quad t\in\mathbb R,\theta\in\mathbb S^{n-1}.
\end{equation}

Now consider the  projection of equation \eqref{equation20} over spherical harmonics: if we decompose
\begin{equation*}
w(t,\theta)=\sum_{m=0} w_m(t)E_m(\theta),\quad h(t,\theta)=\sum_{m=0} h_m(t)E_m(\theta),
\end{equation*}
then for $m=0,1\ldots$, $w_m=w_m(t)$ is a solution to
\begin{equation}\label{equation-introduction-m}
 {\mathcal L}_m w_m:=P_\gamma^{(m)} w_m-\kappa w_m=h_m\quad\text{on }\mathbb R.
\end{equation}
Recall Proposition \ref{prop:symbol} (taking into account that $Q_0=0$). Then, in Fourier variables, equation \eqref{equation-introduction-m} simply becomes
\begin{equation*}
(\Theta_\gamma^{(m)}(\xi)-\kappa)\hat w_m=\hat h_m,
\end{equation*}
where
\begin{equation}\label{symbol-isolated}
{\Theta}^{(m)}_{\gamma}(\xi)=2^{2\gamma}\frac{\Gamma \big(A_m
+\frac{1}{2}\xi i\big)\Gamma \big(A_m
-\frac{1}{2}\xi i\big)}
{\Gamma \big(B_m
+\frac{1}{2}\xi i\big)\Gamma \big(B_m
-\frac{1}{2}\xi i\big)}.
\end{equation}
The behavior of the equation depends on the zeroes of the symbol $\Theta^{(m)}_\gamma(\xi)-\kappa$. In any case, we can formally write
\begin{equation*}\label{w_m:Fourier}
w_m(t)=\int_{\mathbb R} \frac{1}{\Theta_\gamma^{(m)}(\xi)-\kappa} \hat h_m(\xi) e^{i\xi t}\,d\xi=\int_{\mathbb R}  \mathcal G_m(t-t')h_m(t')\,dt',
\end{equation*}
where the Green's function for the problem is given by
\begin{equation*}
\mathcal G_m(t)=\int_{\mathbb R} e^{i\xi t} \frac{1}{\Theta_\gamma^{(m)}(\xi)-\kappa}\,d\xi.
\end{equation*}
This statement is made rigorous in \cite{Ao-Chan-DelaTorre-Fontelos-Gonzalez-Wei} (see Theorem \ref{thm:Hardy-potential} below). First, observe that the symbol \eqref{symbol-isolated}
 can be extended meromorphically to the complex plane; this extension will be denoted simply by
\begin{equation*}
\Theta_m(z)
:=2^{2\gamma}\frac{\Gamma\big(A_m+\frac{1}{2}zi\big)\Gamma\big(A_m-\frac{1}{2}zi\big)}
{\Gamma\big(B_m+\frac{1}{2}zi\big)\Gamma\big(B_m-\frac{1}{2}zi\big)}, \quad z\in \mathbb C.
\end{equation*}

\begin{remark}
It is interesting to observe that $\Theta_m(z)=\Theta_m(-z),$
and that,  for $\xi\in\mathbb R$,
\begin{equation*}\label{symbol-limit}
\Theta_m(\xi)\asymp |m+\xi i|^{2\gamma},\quad \text{as}\quad |\xi|\to\infty,
\end{equation*}
and this limit is uniform in $m$. This also shows that, for fixed $m$, the behavior at infinity is the same as the one for the standard fractional Laplacian $(-\Delta)^\gamma$.
\end{remark}

There are several settings depending on the value of $\kappa$. Let us start with the stable case.

\begin{thm}[\cite{Ao-Chan-DelaTorre-Fontelos-Gonzalez-Wei}]\label{thm:poles}
Let $0\leq \kappa<\Lambda_{n,\gamma}$ and fix a non-negative integer $m$. 
Then the function $$\frac{1}{\Theta_m(z)-\kappa}$$ is meromorphic in $z\in\mathbb C$. More precisely,
its poles are located at points of the form $\tau_j\pm i\sigma_j$ and $-\tau_j\pm i\sigma_j$, where $\sigma_j>\sigma_0>0$ for $j=1,\ldots$, and $\tau_j\geq0$ for $j=0,1,\ldots$. In addition, $\tau_0=0$, and $\tau_j=0$ for all $j$ large enough. For such $j$, $\{\sigma_j\}$ is
 a strictly increasing sequence with no accumulation points.
\end{thm}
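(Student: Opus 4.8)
The plan is to analyze the meromorphic function $\Theta_m(z)-\kappa$ directly, locating its zeros in the complex plane. First I would recall that $\Theta_m(z)$ is built from the quotient of Gamma functions $\Gamma(A_m+\tfrac12 zi)\Gamma(A_m-\tfrac12 zi)/[\Gamma(B_m+\tfrac12 zi)\Gamma(B_m-\tfrac12 zi)]$, so it is entire except at the poles of the numerator Gammas, namely where $A_m\pm\tfrac12 zi$ is a non-positive integer; conversely $1/\Theta_m(z)$ has its poles where $B_m\pm\tfrac12 zi\in-\mathbb N_0$. Thus $1/(\Theta_m(z)-\kappa)$ is meromorphic, and its poles are exactly the zeros of $\Theta_m(z)-\kappa$ together with the poles of $\Theta_m$ (where the denominator $\Theta_m-\kappa$ becomes infinite, hence its reciprocal vanishes --- so those are actually zeros of the reciprocal, not poles; the genuine poles are the zeros of $\Theta_m-\kappa$). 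I would emphasize the symmetry $\Theta_m(z)=\Theta_m(-z)$ and the reality property $\overline{\Theta_m(z)}=\Theta_m(\bar z)$ (since $A_m,B_m$ are real), which together force the zero set to be symmetric under $z\mapsto -z$ and $z\mapsto\bar z$; this is what gives the quadruples $\pm\tau_j\pm i\sigma_j$.

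Next I would pin down where the zeros lie. Restricted to the real axis, $\Theta_m(\xi)$ is real, positive, even, with $\Theta_m(0)=2^{2\gamma}(\Gamma(A_m)/\Gamma(B_m))^2$; for $m=0$ this equals $\Lambda_{n,\gamma}$ by the explicit Gamma-function identity \eqref{Hardy-constant}, and for $m\geq1$ one checks $\Theta_m(0)>\Lambda_{n,\gamma}$ since $A_m-B_m=\gamma$ is fixed while $A_m+B_m$ increases with $\mu_m$. Also $\Theta_m(\xi)\asymp|m+i\xi|^{2\gamma}\to\infty$ as $|\xi|\to\infty$, and $\Theta_m$ is increasing in $|\xi|$ on the real line (this monotonicity, or at least the fact that $\Theta_m(\xi)>\Theta_m(0)$ for $\xi\neq0$, is the key analytic input and should be extracted from the digamma-function expansion of $\frac{d}{d\xi}\log\Theta_m$). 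Consequently, when $0\leq\kappa<\Lambda_{n,\gamma}\leq\Theta_m(0)$, the equation $\Theta_m(\xi)=\kappa$ has \emph{no} real solutions, so all zeros have $\sigma_j\neq0$; and when $\kappa=0$ there are no zeros at all off the pole structure. To locate the purely imaginary zeros $z=i\sigma$, substitute: $\Theta_m(i\sigma)=2^{2\gamma}\Gamma(A_m-\tfrac{\sigma}{2})\Gamma(A_m+\tfrac{\sigma}{2})/[\Gamma(B_m-\tfrac{\sigma}{2})\Gamma(B_m+\tfrac{\sigma}{2})]$, a real-valued function of $\sigma$ that blows up as $\sigma\uparrow 2A_m$ (pole of $\Gamma(A_m-\tfrac\sigma2)$) and whose behavior between consecutive poles $2(A_m+k)$ and $2(A_m-k)$-type points produces, by the intermediate value theorem, one solution $\sigma_j$ of $\Theta_m(i\sigma)=\kappa$ in each gap for $\sigma$ large, giving the strictly increasing unbounded sequence $\{\sigma_j\}$ with no accumulation point (standard, since Gamma poles are simple and spaced by $2$). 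The claim $\tau_0=0$ says the smallest zero is purely imaginary: this comes from the fact that on the imaginary segment $(0,2B_m)$ the function $\sigma\mapsto\Theta_m(i\sigma)$ runs monotonically from $\Theta_m(0)>\kappa$ down through $\kappa$, yielding $\sigma_0$ with $\tau_0=0$; and the uniform bound $\sigma_j>\sigma_0$ is built in.

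The remaining structural assertion --- that $\tau_j=0$ for all large $j$, i.e. only finitely many zeros stray off the imaginary axis --- I would prove via a large-$|z|$ estimate: using $\Theta_m(z)\asymp|m+iz|^{2\gamma}$ uniformly, together with Stirling asymptotics for the Gamma quotient along rays, one shows $|\Theta_m(z)|\to\infty$ as $|z|\to\infty$ \emph{unless} $z$ stays within bounded distance of the imaginary axis (where the numerator Gammas can be near their poles). Hence for $|\re z|$ bounded below and $|z|$ large, $\Theta_m(z)-\kappa\neq0$; combined with a Rouché-type count on a large box, this confines all but finitely many zeros to a thin neighborhood of $i\mathbb R$, and a finer analysis there (again Stirling, now tracking that off-axis the modulus of the Gamma quotient strictly exceeds its on-axis value $\kappa$) forces those zeros to sit exactly on the axis. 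I expect \textbf{this last step --- ruling out off-axis zeros at large height --- to be the main obstacle}, since it requires precise, uniform-in-$m$ control of the Gamma-quotient $\Theta_m(z)$ in the complex plane rather than just on $\mathbb R$ or $i\mathbb R$; the reality/symmetry reductions and the intermediate-value arguments for the imaginary zeros are comparatively routine once the monotonicity of $\Theta_m$ along the axes is in hand.
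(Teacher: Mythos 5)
Your plan of attack --- treating the poles as zeros of the meromorphic Gamma-quotient $\Theta_m(z)-\kappa$, exploiting the evenness $\Theta_m(-z)=\Theta_m(z)$ and reality $\overline{\Theta_m(z)}=\Theta_m(\bar z)$ to get the quadruple symmetry, and then running intermediate-value arguments along $i\mathbb R$ --- is the natural one and is essentially correct in outline. The monotonicity $\Theta_m(\xi)>\Theta_m(0)\ge\Lambda_{n,\gamma}$ on the real axis indeed follows cleanly from the product formula $|\Gamma(a+ib)|^2=|\Gamma(a)|^2\prod_{k\ge 0}\bigl(1+\tfrac{b^2}{(a+k)^2}\bigr)^{-1}$ (equivalently the digamma expansion you invoke), and the decrease of $\sigma\mapsto\Theta_m(i\sigma)$ on $(0,2B_m)$ from $\Theta_m(0)$ down to $0$ follows from the concavity of $\psi$. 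So the existence of the purely imaginary zero $i\sigma_0\in(0,2iB_m)$ and the absence of real zeros are in good shape.

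There are, however, two genuine gaps. First, the assertion $\tau_0=0$ together with $\sigma_j>\sigma_0$ for $j\ge1$ means that the zero \emph{closest to the real axis} is purely imaginary, and strictly so. Your IVT argument produces \emph{a} purely imaginary zero at some $\sigma_0\in(0,2B_m)$, but nothing you write rules out a genuinely off-axis zero $\tau+i\sigma$ with $0<\sigma<\sigma_0$ and $\tau\neq 0$. Knowing $\Theta_m>\kappa$ on the two coordinate axes does not control $\Theta_m$ in the interior of the strip $\{|\mathrm{Im}\,z|<\sigma_0\}$; some additional input (an argument-principle count of zeros in a thin horizontal rectangle, or a lower bound on $|\Theta_m(\tau+i\sigma)-\kappa|$ over that strip) is required, and you treat this as if it were automatic (``the uniform bound $\sigma_j>\sigma_0$ is built in''), which it is not.

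Second, the step you yourself flag as the main obstacle --- ruling out off-axis zeros at large height --- is not only hard but your proposed mechanism is dubious. Writing $z=\tau+i\sigma$ and using the reflection formula, $\Theta_m(z)$ factors into a Gamma quotient growing like $(\sigma/2)^{2\gamma}$ times the sine ratio $\sin(\pi\zeta)/\sin(\pi\zeta+\pi\gamma)$ with $\zeta=(B_m-\sigma/2)+i\tau/2$. For $|\tau|\ge\delta>0$ one indeed gets $|\Theta_m(z)|\asymp\sigma^{2\gamma}\to\infty$ uniformly, so such zeros are excluded. But the problematic regime is $0<|\tau|<\delta$ with $\sigma\to\infty$: there the sine ratio is of order $\kappa\sigma^{-2\gamma}$ with a small, slowly varying imaginary part, and the equation $\tan(\pi\zeta)=\epsilon\sin(\pi\gamma)/(1-\epsilon\cos(\pi\gamma))$ with $\epsilon\approx\kappa\,2^{-2\gamma}(\sigma/2)^{-2\gamma}$ real only forces $\zeta$ to be close to real, not exactly real. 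The statement ``off-axis the modulus of the Gamma quotient strictly exceeds its on-axis value $\kappa$'' is not a meaningful comparison as written (the on-axis value oscillates through all of $(0,\infty)$), and even heuristically $|\sin(\pi u+i\pi v)|^2=\sinh^2(\pi v)+\sin^2(\pi u)$ shows that moving off-axis can either raise or lower the sine ratio's modulus depending on $u$. To close this step you would need, e.g., a Rouch\'e/argument-principle count in each box $[-\delta,\delta]\times[\sigma_j-1,\sigma_j+1]$ showing exactly one zero, which by the $z\mapsto-\bar z$ symmetry must then sit on $i\mathbb R$; that is a different and more quantitative argument than the monotonicity-plus-Stirling sketch you outline. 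Relatedly, your ``one solution in each gap'' claim for the imaginary zeros for large $\sigma$ needs a monotonicity argument on each branch between a zero and a pole of $\Theta_m(i\sigma)$, which is not the same digamma estimate as on $(0,2B_m)$ and should be spelled out.
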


Now we go back to problem \eqref{equation-introduction-m}. From Proposition \ref{thm:poles} one immediately has:

\begin{cor}\label{remark:homogeneous}
For any fixed $m$, all solutions of the homogeneous problem ${\mathcal L}_m w=0$ are of the form
\begin{equation*}
\begin{split}
w_{h}(t)&=C_0^-e^{-\sigma_0 t}+C_0^+e^{\sigma_0 t}+\sum_{j=1}^\infty  e^{-\sigma_j t}
[C_j^-\cos({\tau_j t})+C_j'^-\sin({\tau_j t})]\\
&+\sum_{j=1}^\infty  e^{+\sigma_j t}
[C_j^+\cos({\tau_j t})+C_j'^+\sin({\tau_j t})]
\end{split}
\end{equation*}
for some real constants $C_j^-,C_j^+,C_j'^-,C_j'^+$, $j=0,1,\ldots$.
\end{cor}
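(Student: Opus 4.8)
The plan is to read off the homogeneous solutions directly from the pole structure given in Theorem \ref{thm:poles}, via the inverse Fourier transform / residue calculus. First I would recall that a function $w_m(t)$ solves the homogeneous equation $\mathcal L_m w_m = 0$ precisely when, in the sense of (tempered) distributions or appropriate weighted spaces, its Fourier transform $\widehat{w_m}(\xi)$ is supported on the zero set of $\Theta_m(\xi)-\kappa$. Since $\Theta_m(\xi)-\kappa$ has no \emph{real} zeros when $0\le\kappa<\Lambda_{n,\gamma}$ (the symbol is real and bounded below by $\Theta_m(0) > \Lambda_{n,\gamma} > \kappa$ when $m=0$, and even larger for $m\ge 1$ — this uses the fractional Hardy inequality), there are no bounded oscillatory solutions, and every solution in the relevant class must come from deforming the inversion contour and picking up residues at the \emph{complex} poles $z = \pm\tau_j \pm i\sigma_j$ of $1/(\Theta_m(z)-\kappa)$.

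The key computational step is the residue expansion: writing any solution of $\mathcal L_m w_m = h_m$ as $w_m(t) = \int_{\mathbb R}\mathcal G_m(t-t')h_m(t')\,dt'$ with $\mathcal G_m(t) = \int_{\mathbb R}e^{i\xi t}(\Theta_m(\xi)-\kappa)^{-1}\,d\xi$, one shifts the contour into the upper (for $t>0$) or lower (for $t<0$) half-plane and evaluates residues. A simple pole at $z = \tau_j + i\sigma_j$ (with $\tau_j,\sigma_j$ real, $\sigma_j>0$) contributes a term proportional to $e^{i(\tau_j+i\sigma_j)t} = e^{-\sigma_j t}e^{i\tau_j t}$, whose real part is the combination $e^{-\sigma_j t}[\,\cdot\cos(\tau_j t) + \cdot\sin(\tau_j t)\,]$. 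Poles come in conjugate pairs $\tau_j\pm i\sigma_j$ (because $\Theta_m$ has real coefficients, $\overline{\Theta_m(z)} = \Theta_m(\bar z)$) and in $\pm\tau_j$ pairs (because $\Theta_m(z)=\Theta_m(-z)$), which is exactly the symmetry that lets us package the four real constants $C_j^{\pm}, C_j'^{\pm}$ per index $j$: the $-\sigma_j$ family (decaying as $t\to+\infty$) from upper half-plane poles, and the $+\sigma_j$ family from lower half-plane poles. The special index $j=0$ has $\tau_0=0$, giving the pure exponentials $e^{\pm\sigma_0 t}$ with no oscillation; and since $\tau_j=0$ for all large $j$, the sum is genuinely of the stated form with only finitely many oscillatory terms.

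To turn this into a clean statement, I would argue as follows: (i) the functions listed are all solutions — each $e^{\pm\sigma_j t}\cos(\tau_j t)$ etc.\ has Fourier transform a delta (or derivative of delta) at the corresponding complex frequency, which is annihilated by multiplication by $\Theta_m(z)-\kappa$ since that function vanishes there; more carefully, one checks directly from the convolution-kernel representation of $P_\gamma^{(m)}$ in Proposition \ref{prop:symbol} that these exponentials are eigenfunctions with the right eigenvalue, using that $\Theta_m(\mp i\sigma_j \pm \tau_j) = \kappa$; (ii) conversely, any solution is a (locally uniformly convergent) superposition of these — this follows from the Green's function expansion together with the fact that the poles listed in Theorem \ref{thm:poles} are \emph{all} the poles, so the general homogeneous solution is parametrized by the residue data, i.e.\ by the constants $C_j^{\pm}, C_j'^{\pm}$.

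The main obstacle I anticipate is step (ii) — the completeness/convergence part. Asserting that \emph{every} solution is captured requires specifying the function class (some weighted $L^2$ or polynomially-bounded space) and then justifying the contour-shift argument: one must control the decay of $1/(\Theta_m(z)-\kappa)$ along horizontal segments going to $\pm\infty$ in $\re z$ (this uses the asymptotics $\Theta_m(\xi)\asymp|m+\xi i|^{2\gamma}$ from the Remark, extended off the real axis), verify that the arc contributions vanish, and confirm that the resulting series over residues converges — which is where the fact that $\{\sigma_j\}$ is strictly increasing with no accumulation point, from Theorem \ref{thm:poles}, is essential. Since the statement is phrased as ``all solutions are of the form'' without pinning down the space, I expect the intended proof to be the formal residue computation sketched above, with the rigorous functional-analytic underpinning deferred to \cite{Ao-Chan-DelaTorre-Fontelos-Gonzalez-Wei}; accordingly I would present the residue expansion in detail and invoke Theorem \ref{thm:poles} for the structural claims about $\{\tau_j,\sigma_j\}$.
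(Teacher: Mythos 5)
Your proposal matches the paper's intent: the paper states the corollary follows ``immediately'' from Theorem~\ref{thm:poles}, and your residue-calculus reading of the pole structure $\pm\tau_j\pm i\sigma_j$ (each pole contributing an eigenfunction $e^{\pm\sigma_j t}e^{\pm i\tau_j t}$, packaged into the four real constants via the symmetries $\Theta_m(z)=\Theta_m(-z)$ and $\overline{\Theta_m(z)}=\Theta_m(\bar z)$) is exactly the reasoning being compressed. The only slip is cosmetic: for $m=0$ one has $\Theta_0(0)=\Lambda_{n,\gamma}$ rather than $\Theta_0(0)>\Lambda_{n,\gamma}$, but this still gives $\Theta_0(0)>\kappa$, so the absence of real zeros holds as you claim.
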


In the next section, we will give a variation of constants formula to construct a particular solution to \eqref{equation-introduction-m}. As in the usual ODE case, one uses the solutions of the homogeneous problem as building blocks.

\subsection{The variation of constants formula}\label{subsection:variation}

Before we state our main theorem, let us recall a small technical lemma:

\begin{lemma}[\cite{Ao-Chan-DelaTorre-Fontelos-Gonzalez-Wei}]\label{lem:convolution}
Suppose
\[
f_1(t)=O(e^{-a|t|})
	\text{ as } |t|\to\infty
\quad \text{ and } \quad
f_2(t)=\begin{cases}
O(e^{-a_+ t})
	& \text{ as } t\to+\infty\\
O(e^{-a_-|t|})
	& \text{ as } t\to-\infty.
\end{cases}
\]
for $a>0$, $a+a_+>0$, $a+a_->0$, $a\neq a_+$, $a\neq a_-$.\footnote{When $a=a_+$, the upper bound is worsened to $O(te^{-at})$. A similar bound holds when $a=a_-$.} Then
\[
f_1\ast f_2(t)=\begin{cases}
O(e^{-\min\{a,a_+\} t})
	& \text{ as } t\to+\infty\\
O(e^{-\min\{a,a_-\}|t|})
	& \text{ as } t\to-\infty.
\end{cases}
\]
\end{lemma}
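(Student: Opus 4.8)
The plan is to prove the convolution bound by splitting the integral $f_1 \ast f_2(t) = \int_{\mathbb R} f_1(t-s) f_2(s)\,ds$ according to whether $s$ lies near $+\infty$, near $-\infty$, or in the region where $t-s$ is large. By symmetry in the two tails of $f_1$, it suffices to carry out the estimate as $t \to +\infty$; the case $t \to -\infty$ is entirely analogous after reflecting. So fix $t$ large and positive, and write $f_1 \ast f_2(t) = I_1 + I_2 + I_3$, where $I_1$ integrates over $s \le 0$, $I_2$ over $0 \le s \le t$, and $I_3$ over $s \ge t$; we absorb the multiplicative constants in the hypotheses into the $O(\cdot)$ throughout.

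First I would handle $I_3$, where $s \ge t > 0$, so that $s \to +\infty$ and also $t - s \le 0$, hence $|t-s| = s - t$. Here $|f_1(t-s) f_2(s)| \lesssim e^{-a(s-t)} e^{-a_+ s} = e^{at} e^{-(a+a_+)s}$, and since $a + a_+ > 0$ the $s$-integral over $[t,\infty)$ converges and equals $\frac{1}{a+a_+} e^{-(a+a_+)t}$ up to a constant, giving $I_3 = O(e^{at} \cdot e^{-(a+a_+)t}) = O(e^{-a_+ t})$. For $I_1$, where $s \le 0$ so $s \to -\infty$ and $t - s \ge t > 0$, we have $|f_1(t-s) f_2(s)| \lesssim e^{-a(t-s)} e^{-a_-|s|} = e^{-at} e^{as} e^{a_- s} = e^{-at} e^{(a+a_-)s}$ for $s \le 0$; since $a + a_- > 0$ the integral over $(-\infty,0]$ converges to a constant, so $I_1 = O(e^{-at})$. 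The main term is $I_2$: on $[0,t]$ both arguments can be large, and we bound $|f_1(t-s) f_2(s)| \lesssim e^{-a(t-s)} e^{-a_+ s} = e^{-at} e^{(a - a_+)s}$. Integrating $e^{(a-a_+)s}$ over $[0,t]$ gives, up to constants, $\frac{1}{a-a_+}(e^{(a-a_+)t} - 1)$ when $a \ne a_+$, whence $I_2 = O(e^{-at}) \cdot O(e^{(a-a_+)t} + 1) = O(e^{-a_+ t} + e^{-at}) = O(e^{-\min\{a,a_+\}t})$. Combining, $f_1 \ast f_2(t) = O(e^{-a_+ t}) + O(e^{-at}) + O(e^{-\min\{a,a_+\}t}) = O(e^{-\min\{a,a_+\}t})$ as $t \to +\infty$, which is the claim.

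The only real subtlety — the ``obstacle'' — is the resonant case $a = a_+$ (and symmetrically $a = a_-$): then $\int_0^t e^{(a-a_+)s}\,ds = t$, so $I_2 = O(t e^{-at})$, which is exactly the worsened bound flagged in the footnote; in the generic hypothesis $a \ne a_+$, $a \ne a_-$ this does not arise and the geometric integrals produce clean exponential bounds. I would also note at the outset that the displayed hypotheses guarantee $f_1 \ast f_2$ is well-defined: the convergence of each of $I_1$, $I_2$, $I_3$ for every fixed $t$ is precisely what the conditions $a + a_+ > 0$ and $a + a_- > 0$ (together with $a > 0$) secure, so no separate integrability argument is needed. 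Everything else is the elementary bookkeeping of splitting the line into three pieces and estimating each geometric integral, so beyond the resonance remark there is nothing delicate.
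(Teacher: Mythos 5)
Your argument is correct, and it is the natural one: split $\int_{\mathbb R} f_1(t-s)f_2(s)\,ds$ at $s=0$ and $s=t$, bound each piece by a geometric integral, and note that resonance $a=a_+$ (resp.\ $a=a_-$) degrades the middle piece to $O(te^{-at})$, exactly as the footnote records. The paper itself states this lemma without proof, attributing it to the reference \cite{Ao-Chan-DelaTorre-Fontelos-Gonzalez-Wei}, so there is no in-text proof to compare against; your three-region decomposition is the standard route and there is no reason to think the original differs in substance.

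One small caveat worth flagging, since you raise the point yourself: the displayed hypotheses give only \emph{asymptotic} bounds on $f_1$, $f_2$, so strictly speaking you also need $f_1$, $f_2$ locally bounded (or at least locally integrable) to promote $|f_1(\tau)|=O(e^{-a|\tau|})$ as $|\tau|\to\infty$ to a global estimate $|f_1(\tau)|\le Ce^{-a|\tau|}$ valid on all of $\mathbb R$, which is what your bounds on $I_1,I_2,I_3$ actually use (e.g.\ near $s=t$ in $I_2$ and $I_3$). In the paper's applications $f_1$ is the Green's function $\mathcal G_m$ (smooth away from $0$, with an integrable singularity of order $|t|^{-1+2\gamma}$ there) and $f_2$ is a continuous right-hand side, so this is harmless, but your claim that ``no separate integrability argument is needed'' is a slight overstatement of what the displayed hypotheses alone deliver.
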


 From now on, once $m=0,1,\ldots$ has been fixed, we will drop the subindex $m$ in the notation if there is no risk of confusion. Thus, given $h=h(t)$, we consider the problem
\begin{equation}\label{problem-m}
\mathcal L_m w=h, \quad w=w(t).
\end{equation}

The variation of constants formula is one of the main results  in \cite{Ao-Chan-DelaTorre-Fontelos-Gonzalez-Wei}. Our version here is a minor restatement of the original result, to account for clarity. In addition, the proof has been simplified, so we give the complete arguments for statement \emph{b.} below.

\begin{thm}[\cite{Ao-Chan-DelaTorre-Fontelos-Gonzalez-Wei}]\label{thm:Hardy-potential}
Let $0\leq \kappa<\Lambda_{n,\gamma}$ and fix a non-negative integer $m$. 
Assume that the right hand side $h$ in \eqref{problem-m} satisfies
\begin{equation}\label{decay-h}
h(t)=\begin{cases}
O(e^{-\delta t}) &\text{as } t\to+\infty,\\
O(e^{\delta_0 t}) &\text{as } t\to-\infty,
\end{cases}
\end{equation}
for some real constants $\delta,\delta_0>-\sigma_0$. It holds:
\begin{itemize}
\item[\emph{a.}] A particular solution of \eqref{equation-introduction-m} can be written as
\begin{equation}\label{Green0}
w_p(t)=\int_{\mathbb R}  {\mathcal G}_m(t-t')h(t')\,dt',
\end{equation}
where
\begin{equation*}\label{Green1}
{\mathcal G}_m(t)=c_0e^{-\sigma_0 |t|}+\sum_{j=1}^\infty  e^{-\sigma_j |t|} [c_j\cos(\tau_j |t|)
+c'_j\sin(\tau_j |t|)],
\end{equation*}
for some precise real constants $c_j,c'_j$ depending on $\kappa,n,\gamma$.  Moreover, ${\mathcal G}_m$ is an even $\mathcal C^{\infty}$ function when $t\neq 0$.

\item[\emph{b.}] Suppose $\delta>\sigma_J$ for some $J=0,1,\dots$. Then the particular solution given by \eqref{Green0} satisfies
\begin{equation}\label{eq:wp-new}\begin{split}
w_p(t)
&=
	c_0 C_0 e^{-\sigma_0 t}
	+\sum_{j=1}^{J}
		c_j
		\left(
			C_j^1 \cos(\tau_j t)
			+C_j^2 \sin(\tau_j t)
		\right)
		e^{-\sigma_j t}
	+O(e^{-\min\{\delta,\sigma_{J+1}\}t})
\end{split}\end{equation}
as $t\to+\infty$, and
\[
w_p(t)=O(e^{-\min\{\delta_0,\sigma_0\}|t|})
\]
as $t\to-\infty$, where
\begin{equation}\label{constantD0}
C_0=
	\int_{\mathbb{R}}
		e^{\sigma_0 t'}
		h(t')
	\,dt',
\end{equation}
and for each $j=1,2,\dots$,
\[
C_j^1=
	\int_{\mathbb{R}}
		e^{\sigma_j t'}
		\cos(\tau_j t')
		h(t')
	\,dt',
\quad \text{ and } \quad
C_j^2=
	\int_{\mathbb{R}}
		e^{\sigma_j t'}
		\sin(\tau_j t')
		h(t')
	\,dt'.
\]
\end{itemize}
\end{thm}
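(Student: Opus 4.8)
The plan is to start from the convolution representation \eqref{Green0} with the explicit kernel $\mathcal{G}_m$ from part \emph{a.}, and to isolate the contributions of the ``slowly decaying'' exponentials $e^{-\sigma_j|t|}$ (and their trigonometric companions) for $j=0,1,\dots,J$ by splitting each term of the kernel according to the sign of $t-t'$. Concretely, for $t$ large and positive, and for each fixed $j\le J$, I would write $e^{-\sigma_j|t-t'|}\cos(\tau_j|t-t'|)$ as $e^{-\sigma_j(t-t')}\cos(\tau_j(t-t'))$ on the region $t'<t$ and as $e^{\sigma_j(t-t')}\cos(\tau_j(t-t'))$ on the region $t'>t$. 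The first piece, integrated against $h(t')$ over all of $\mathbb{R}$, produces exactly the claimed main terms: expanding $\cos(\tau_j(t-t'))=\cos(\tau_j t)\cos(\tau_j t')+\sin(\tau_j t)\sin(\tau_j t')$ and factoring out the $t$-dependence gives the coefficients $C_j^1, C_j^2$ in \eqref{constantD0} and below (and $C_0$ for $j=0$). The remaining pieces — the integral of the first piece over $t'>t$, plus the integral of the second piece over $t'>t$ — are error terms that must be shown to be $O(e^{-\min\{\delta,\sigma_{J+1}\}t})$.

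The estimate of these error terms is where Lemma \ref{lem:convolution} does the work. The ``tail'' integral $\int_t^\infty e^{-\sigma_j(t-t')}(\cdots)h(t')\,dt'$ is governed by the decay of $h$ at $+\infty$, namely $O(e^{-\delta t'})$ with $\delta>\sigma_j$, so it is $O(e^{-\delta t})$ — here one uses $\delta>\sigma_j$ precisely to make the exponential $e^{\sigma_j t'}$ integrable against $e^{-\delta t'}$ near $+\infty$. The ``wrong-sign'' integral $\int_t^\infty e^{\sigma_j(t-t')}(\cdots)h(t')\,dt'$ is even better: $e^{-\sigma_j t'}$ is harmless and one gets $O(e^{-\min\{\delta,\sigma_j\}t}) = O(e^{-\sigma_j t})$ at worst, but this is absorbed into the main term for $j\le J$ and bounded by the larger $\sigma_{J+1}$-type contributions for $j \ge J+1$. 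For the part of the kernel with $j\ge J+1$, I would not split but simply apply Lemma \ref{lem:convolution} directly: the kernel piece decays like $e^{-\sigma_{J+1}|t|}$, $h$ decays like $e^{-\delta t}$ as $t\to+\infty$ and $e^{\delta_0 t}$ as $t\to -\infty$, with $\delta,\delta_0>-\sigma_0$ ensuring the hypotheses $a+a_\pm>0$ hold, yielding the convolution bound $O(e^{-\min\{\sigma_{J+1},\delta\}t})$ as $t\to+\infty$. One has to be slightly careful about the borderline case $\delta=\sigma_j$ for some $j$, handled by the footnote to Lemma \ref{lem:convolution} (an extra polynomial factor, still absorbed into $O(e^{-\min\{\delta,\sigma_{J+1}\}t})$ after shrinking the exponent infinitesimally, or one simply assumes $\delta\neq\sigma_j$ without loss). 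Summability of the series $\sum_j |c_j| e^{-\sigma_j|t|}$ — needed to interchange sum and integral and to control the infinite tail — follows from the growth of $\sigma_j$ and the decay of the coefficients $c_j, c_j'$ established in part \emph{a.}; I would invoke this rather than re-prove it.

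For the behavior as $t\to-\infty$, the argument is symmetric but simpler because there are no main terms to extract: every kernel piece $e^{-\sigma_j|t-t'|}(\cdots)$ has $\sigma_j\ge\sigma_0$, and convolving with $h$ (which is $O(e^{\delta_0 t})$ as $t\to -\infty$, $O(e^{-\delta t})$ as $t\to+\infty$) via Lemma \ref{lem:convolution} gives $O(e^{-\min\{\sigma_0,\delta_0\}|t|})$; summing over $j$ using the same summability only improves the exponent, so the bound $O(e^{-\min\{\delta_0,\sigma_0\}|t|})$ holds. I expect the main obstacle to be bookkeeping rather than a genuine difficulty: one must organize the split of the kernel, the region decomposition of each integral, and the application of Lemma \ref{lem:convolution} uniformly in $j$ so that the infinite sum of error terms is itself $O(e^{-\min\{\delta,\sigma_{J+1}\}t})$ — this requires that the implied constants from Lemma \ref{lem:convolution} be summable against $|c_j|$, which in turn rests on the spectral gap $\sigma_{j+1}-\sigma_j$ not degenerating and on the decay rate of $c_j$ from part \emph{a.}
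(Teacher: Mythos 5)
Your strategy mirrors the paper's proof exactly: split the kernel into the first $J+1$ terms plus the tail $\mathcal G^J$, subtract-and-add $\int_t^\infty$ so that $\int_{-\infty}^t$ becomes $\int_{\mathbb R}-\int_t^\infty$, expand $\cos(\tau_j(t-t'))$ by the angle-sum formula to factor out the $t$-dependence and read off $C_0,C_j^1,C_j^2$, and control $\mathcal G^J\ast h$ in one shot by Lemma~\ref{lem:convolution}. The $t\to-\infty$ estimate and the choice to defer the summability/decay of $c_j,c_j'$ to part \emph{a.} likewise coincide with the paper.

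There is one genuine slip in the sketch. You estimate the ``wrong-sign'' piece $\int_t^\infty e^{\sigma_j(t-t')}(\cdots)h(t')\,dt'$ as $O(e^{-\min\{\delta,\sigma_j\}t})=O(e^{-\sigma_j t})$ ``at worst'' and say it is ``absorbed into the main term.'' That reasoning does not close: the main term is the \emph{specific} function $c_j(C_j^1\cos\tau_j t+C_j^2\sin\tau_j t)e^{-\sigma_j t}$, and an unspecified $O(e^{-\sigma_j t})$ remainder can neither be folded into it nor into the stated $O(e^{-\min\{\delta,\sigma_{J+1}\}t})$ error, since $\sigma_j<\min\{\delta,\sigma_{J+1}\}$ for $j\le J$. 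The fix is that this one-sided integral decays strictly better than the two-sided min of Lemma~\ref{lem:convolution}: since $h(t')=O(e^{-\delta t'})$ and $\sigma_j+\delta>0$, one has $e^{\sigma_j t}\int_t^\infty e^{-\sigma_j t'}e^{-\delta t'}\,dt'=O(e^{-\delta t})$, exactly as in the paper's displays \eqref{eq:wp-est-2}--\eqref{eq:wp-est-3}; the same comment applies to the ``tail'' piece you treated correctly. A smaller point: your worry about uniform-in-$j$ constants when summing the errors is unnecessary here, because only the finitely many terms $j\le J$ are split, while the whole infinite tail $\mathcal G^J\ast h$ is estimated once via $\mathcal G^J=O(e^{-\sigma_{J+1}|t|})$. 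With the wrong-sign estimate corrected, your argument is the paper's proof.
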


\begin{proof}
The formula for the Green's function $\mathcal{G}_m$ in \emph{a.} follows directly from \cite{Ao-Chan-DelaTorre-Fontelos-Gonzalez-Wei}. We will prove \emph{b.}, building on the first part. Without loss of generality we fix $m=0$ and suppress the subscript.

Note that $\mathcal{G}=O(e^{-\sigma_0|t|})$ as $t\to\pm\infty$. Then,  since $\delta,\delta_0>-\sigma_0$, Lemma \ref{lem:convolution} implies that,  as $t\to-\infty$,
\[
w_p(t)=O(e^{-\min\{\delta_0,\sigma_0\}|t|}).
\]

As for $t\to+\infty$, let us write
\[
\mathcal{G}(t)
=
	c_0 e^{-\sigma_0|t|}
	+\sum_{j=1}^{J}  e^{-\sigma_j |t|} [c_j\cos(\tau_j |t|)+c'_j\sin(\tau_j |t|)]
	+\mathcal{G}^J(t),
\]
where
\[
\mathcal{G}^J(t)
=
	\sum_{j=J+1}^{\infty}  e^{-\sigma_j |t|} [c_j\cos(\tau_j |t|)+c'_j\sin(\tau_j |t|)].
\]
The particular solution in \eqref{Green0} is then given by
\[
w_p
=
	W_0
	+\sum_{j=1}^{J}[W_j+W_j']
	+\mathcal{G}^J \ast h,
\]
for
\[\begin{split}
W_0(t)
&=
	c_0\int_{\mathbb{R}}
		e^{-\sigma_0|t-t'|}
		h(t')
	\,dt',\\
W_j(t)+W_j'(t)
&=
	\int_{\mathbb{R}}
		e^{-\sigma_j|t-t'|}
		[c_j\cos(\tau_j|t-t'|)+c'_j\sin(\tau_j|t-t'|)]
		h(t')
	\,dt'.
\end{split}\]
By Lemma \ref{lem:convolution}, using the facts that $\mathcal{G}^J=O(e^{-\sigma_{J+1}|t|})$ as $t\to\pm\infty$, $\delta>\sigma_J>-\sigma_{J+1}$ and $\delta_0>-\sigma_0>-\sigma_{J+1}$, we have
\begin{equation}\label{eq:wp-est-1}
\mathcal{G}^J \ast h(t)
=O(e^{-\min\{\delta,\sigma_{J+1}\}|t|})
	\quad \text{ as } t\to+\infty.
\end{equation}
Next we turn to the terms $W_0$ and $W_j$, $j=1,\dots,J$. Their estimates are the same in spirit but that for $W_0$ is simpler since $\tau_0=0$. Using that $\delta>\sigma_J>\sigma_0$ and $\delta_0>-\sigma_0$, we have $e^{\sigma_0 \cdot} h \in L^1(\mathbb{R})$ and we can write
\begin{equation}\label{eq:wp-est-2}\begin{split}
W_0(t)
&=
	c_0\int_{-\infty}^{t}
		e^{-\sigma_0 t}
		e^{\sigma_0 t'}
		h(t')
	\,dt'
	+c_0\int_{t}^{+\infty}
		e^{\sigma_0 t}
		e^{-\sigma_0 t'}
		h(t')
	\,dt'\\
&=
	c_0 e^{-\sigma_0 t}
	\left\{
		\int_{-\infty}^{+\infty}
		-\int_{t}^{+\infty}
	\right\}
		e^{\sigma_0 t'}
		h(t')
	\,dt'
	+c_0 e^{\sigma_0 t}
	\int_{t}^{+\infty}
		e^{-\sigma_0 t'}
		h(t')
	\,dt'\\
&=
	c_0 C_0 e^{-\sigma_0 t}
	+O\left(
		e^{-\sigma_0 t}
		\int_{t}^{+\infty}
			e^{\sigma_0 t'}
			e^{-\delta t'}
		\,dt'
	\right)
	+O\left(
		e^{\sigma_0 t}
		\int_{t}^{+\infty}
			e^{-\sigma_0 t'}
			e^{-\delta t'}
		\,dt'
	\right)\\
&=
	c_0 C_0 e^{-\sigma_0 t}
	+O(e^{-\delta t}),
\end{split}\end{equation}
as $t\to+\infty$. Similarly, for all $j=1,\dots,J$, we have $e^{\sigma_j \cdot} h \in L^1(\mathbb{R})$ in view of $\delta>\sigma_J \geq \sigma_j$ and $\delta_0>-\sigma_0$. We can therefore compute
\begin{equation}\label{eq:wp-est-3}\begin{split}
W_j(t)
&=
	c_j\int_{-\infty}^{t}
		e^{-\sigma_j t}
		e^{\sigma_j t'}
		\left(
			\cos(\tau_j t) \cos(\tau_j t')
			+\sin(\tau_j t) \sin(\tau_j t')
		\right)
		h(t')
	\,dt'\\
&\quad\;
	+c_j\int_{t}^{+\infty}
		e^{\sigma_j t}
		e^{-\sigma_j t'}
		\cos(\tau_j(t'-t))
		h(t')
	\,dt'\\
&=
	c_j e^{-\sigma_j t} \cos(\tau_j t)
	\left(
		\int_{-\infty}^{+\infty}
		-\int_{t}^{+\infty}
	\right)
		e^{\sigma_j t'}
		\cos(\tau_j t')
		h(t')
	\,dt'\\
&\quad\;
	+c_j e^{-\sigma_j t} \sin(\tau_j t)
	\left(
		\int_{-\infty}^{+\infty}
		-\int_{t}^{+\infty}
	\right)
		e^{\sigma_j t'}
		\sin(\tau_j t')
		h(t')
	\,dt'\\
&\quad\;
	+c_j e^{\sigma_j t}
	\int_{t}^{+\infty}	
		e^{-\sigma_j t'}
		\cos(\tau_j(t'-t))
		h(t')
	\,dt'\\
&=
	c_j C_j^1 e^{-\sigma_j t} \cos(\tau_j t)
	+O\left(
		e^{-\sigma_j t}
		\int_{t}^{+\infty}
			e^{\sigma_j t'}
			e^{-\delta t'}
		\,dt'
	\right)\\
&\quad\;
	+c_j C_j^2 e^{-\sigma_j t} \sin(\tau_j t)
	+O\left(
		e^{-\sigma_j t}
		\int_{t}^{+\infty}
			e^{\sigma_j t'}
			e^{-\delta t'}
		\,dt'
	\right)\\
&\quad\;
	+O\left(
		e^{\sigma_j t}
		\int_{t}^{+\infty}
			e^{-\sigma_j t'}
			e^{-\delta t'}
		\,dt'
	\right)\\
&=
	c_j
	\left(
		C_j^1 \cos(\tau_j t)
		+C_j^2 \sin(\tau_j t)
	\right)
	e^{-\sigma_j t}
	+O(e^{-\delta t}),
\end{split}\end{equation}
as $t\to+\infty$, and similarly for $W_j'$.
 Putting together \eqref{eq:wp-est-1}, \eqref{eq:wp-est-2} and \eqref{eq:wp-est-3} yields \eqref{eq:wp-new}.
\end{proof}

\smallskip

We also look at the case when $\kappa$ leaves the stability regime. In order to simplify the presentation, we only consider the projection $m=0$ and the equation
\begin{equation}\label{equation-radial}
{\mathcal L}_0 w=h,\quad w=w(t).
\end{equation}
Moreover, we assume that only the first pole leaves the stability regime, which happens if $\Lambda_{n,\gamma}<\kappa<\Lambda'_{n,\gamma}$  for some constant $\Lambda'_{n,\gamma}>\Lambda_{n,\gamma}$. Then, in contrast to Theorem \ref{thm:poles}, we will have two additional real poles $\tau_0$ and $-\tau_0$.
\begin{prop}\label{prop:unstable}
Let $\Lambda_{n,\gamma}<\kappa<\Lambda'_{n,\gamma}$. 
Assume the decay condition \eqref{decay-h} for $h$ as in Theorem \ref{thm:Hardy-potential}.
It holds:
\begin{itemize}
\item[\textit{i.}]
If $\delta,\delta_0> 0$, then
a particular solution of \eqref{equation-radial} can be written as
\begin{equation*}\label{v0}
w_0(t)=\int_{\mathbb R}  {\mathcal G_0}(t-t')h(t')\,dt',
\end{equation*}
where
\begin{equation*}
\mathcal G_0(t)=
c_0\sin({\tau_0t})\chi_{(-\infty,0)}(t)
+\sum_{j=1}^\infty e^{-\sigma_j |t|} [c_j\cos(\tau_j t)+c_j'\sin(\tau_j |t|)]
\end{equation*}
for some constants $c_j,c_j'$, $j=0,1,\ldots$. Moreover, $\mathcal G_0$ is a $\mathcal C^{\infty}$ function when $t\neq 0$.

\item[\textit{ii.}] The analogous statements to  Theorem \ref{thm:Hardy-potential}, {\em b.},  and Corollary \ref{remark:homogeneous} hold.
\end{itemize}
\end{prop}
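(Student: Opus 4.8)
The plan is to run the machinery of Theorem~\ref{thm:Hardy-potential} and Corollary~\ref{remark:homogeneous}, tracking the single structural change as $\kappa$ crosses $\Lambda_{n,\gamma}$. Since $\Theta_0$ attains its minimum over $\mathbb{R}$ at $\xi=0$ with $\Theta_0(0)=\Lambda_{n,\gamma}$ (the fractional Hardy inequality), the pair of poles $\pm i\sigma_0$ of $(\Theta_0(z)-\kappa)^{-1}$ migrates to the origin as $\kappa\uparrow\Lambda_{n,\gamma}$ and, for $\Lambda_{n,\gamma}<\kappa<\Lambda'_{n,\gamma}$, re-emerges as a pair of simple \emph{real} poles $\pm\tau_0$ with $\Theta_0(\pm\tau_0)=\kappa$; here $\Lambda'_{n,\gamma}$ is by definition the first value of $\kappa$ at which the next pair of poles reaches $\mathbb{R}$, so all the other poles $\pm\tau_j\pm i\sigma_j$ ($j\geq1$) still have $\sigma_j>0$. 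This is the analogue of Theorem~\ref{thm:poles}, and from it the homogeneous-solution part of~ii.\ follows verbatim as in Corollary~\ref{remark:homogeneous}: every solution of $\mathcal{L}_0 w=0$ now reads
\[
w_h(t)=a_0\cos(\tau_0 t)+b_0\sin(\tau_0 t)+\sum_{j\geq1}e^{-\sigma_j t}(a_j\cos(\tau_j t)+\tilde a_j\sin(\tau_j t))+\sum_{j\geq1}e^{\sigma_j t}(b_j\cos(\tau_j t)+\tilde b_j\sin(\tau_j t)),
\]
i.e.\ the old $e^{\pm\sigma_0 t}$ modes become the bounded modes $\cos(\tau_0 t),\sin(\tau_0 t)$.

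For part~i.\ I would construct $\mathcal{G}_0$ as in the stable case, as the inverse Fourier transform of $(\Theta_0(\xi)-\kappa)^{-1}$ evaluated by contour shift and residues, but along the contour that follows $\mathbb{R}$ with small semicircular detours into the \emph{upper} half-plane around $z=\pm\tau_0$. Closing upward for $t>0$ leaves $\pm\tau_0$ out and yields only $\sum_{j\geq1}e^{-\sigma_j t}[c_j\cos(\tau_j t)+c_j'\sin(\tau_j t)]$; closing downward for $t<0$ encloses them too, and since $\Theta_0(-z)=\Theta_0(z)$ gives $\Res_{-\tau_0}(\Theta_0-\kappa)^{-1}=-\Res_{\tau_0}(\Theta_0-\kappa)^{-1}$ while $\Theta_0$ is real and positive on $\mathbb{R}$ (so $\Res_{\tau_0}(\Theta_0-\kappa)^{-1}=1/\Theta_0'(\tau_0)\in\mathbb{R}$), the two real poles together contribute the single term $c_0\sin(\tau_0 t)$ with $c_0\in\mathbb{R}$. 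This reproduces the stated $\mathcal{G}_0$, whose smoothness away from $t=0$ is inherited from the stable case. That $\mathcal{L}_0\mathcal{G}_0=\delta_0$ is checked as in \cite{Ao-Chan-DelaTorre-Fontelos-Gonzalez-Wei}---near the new poles it is the same residue computation---so $\mathcal{L}_0(\mathcal{G}_0\ast h)=h$, the convolution being absolutely convergent exactly because $\delta,\delta_0>0$ control $h$ against the bounded piece $c_0\sin(\tau_0\cdot)\chi_{(-\infty,0)}$.

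The asymptotic expansion in~ii.\ mimics the proof of Theorem~\ref{thm:Hardy-potential}~b.: split $\mathcal{G}_0=c_0\sin(\tau_0\cdot)\chi_{(-\infty,0)}+\sum_{j=1}^{J}e^{-\sigma_j|\cdot|}[c_j\cos(\tau_j|\cdot|)+c_j'\sin(\tau_j|\cdot|)]+\mathcal{G}_0^J$ with $\mathcal{G}_0^J=O(e^{-\sigma_{J+1}|t|})$, convolve each summand with $h$, and estimate $\mathcal{G}_0^J\ast h$ and the finitely many decaying terms with Lemma~\ref{lem:convolution} and the split-at-$t$ computation of \eqref{eq:wp-est-2}--\eqref{eq:wp-est-3}. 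The term $c_0\sin(\tau_0\cdot)\chi_{(-\infty,0)}\ast h$ is treated by hand: writing $\int_{\mathbb{R}}=\int_{-\infty}^{t}+\int_{t}^{+\infty}$ and expanding $\sin(\tau_0(t-t'))$ into products of sines and cosines, one sees it contributes only $O(e^{-\delta t})$ as $t\to+\infty$ and, as $t\to-\infty$, the bounded oscillation $c_0(\sin(\tau_0 t)\int_{\mathbb{R}}\cos(\tau_0 t')h(t')\,dt'-\cos(\tau_0 t)\int_{\mathbb{R}}\sin(\tau_0 t')h(t')\,dt')+O(e^{\delta_0 t})$; this last step is where $\delta_0>0$ is used, to kill $\int_{-\infty}^{t}$.

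The main obstacle is precisely the non-decaying term $c_0\sin(\tau_0 t)\chi_{(-\infty,0)}(t)$: Lemma~\ref{lem:convolution} assumes exponential decay of both factors and so does not cover it, so both the convergence of $\mathcal{G}_0\ast h$ and the extraction of its asymptotic contribution must be done directly, which is where the strengthened hypothesis $\delta,\delta_0>0$ enters. A secondary point, since $\mathcal{L}_0$ now has the bounded kernel elements $\cos(\tau_0 t),\sin(\tau_0 t)$ and hence no unique Green's function, is to justify that the detour-above-$\pm\tau_0$ prescription is the one giving the stated one-sided form of $\mathcal{G}_0$ while still satisfying $\mathcal{L}_0\mathcal{G}_0=\delta_0$; but away from $\pm\tau_0$ nothing changes from \cite{Ao-Chan-DelaTorre-Fontelos-Gonzalez-Wei}, so this is just local residue bookkeeping.
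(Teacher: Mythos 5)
Your proposal is correct and follows the natural route: the paper defers the proof of this proposition to \cite{Ao-Chan-DelaTorre-Fontelos-Gonzalez-Wei}, but its structure (meromorphic extension of the symbol, contour shift by residues, convolution estimates via Lemma~\ref{lem:convolution}) is exactly what the paper uses for the stable case (Theorem~\ref{thm:Hardy-potential} and its proof of part~\emph{b.}), and you adapt it in precisely the way the paper intends. Your identification of the two genuinely new points --- the migration of $\pm i\sigma_0$ onto the real axis as $\pm\tau_0$, and the resulting non-even, one-sided bounded piece of $\mathcal{G}_0$, which forces a by-hand treatment of $c_0\sin(\tau_0\cdot)\chi_{(-\infty,0)}\ast h$ and makes $\delta,\delta_0>0$ essential --- is correct, as is the residue bookkeeping showing the two real poles combine into a single real $\sin(\tau_0 t)$ term.

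One small caveat worth flagging: with the detour \emph{above} $\pm\tau_0$ the kernel oscillates on $t<0$, so $\mathcal{G}_0\ast h$ carries the bounded oscillation as $t\to-\infty$ and decays as $t\to+\infty$, exactly as you compute. However, the paper's subsequent applications (Propositions~\ref{prop:unstable+} and~\ref{two-dimensional}) require a particular solution that \emph{decays} as $t\to-\infty$ and exhibits the bounded oscillation $a_0^1\cos(\tau_0 t)+a_0^2\sin(\tau_0 t)$ as $t\to+\infty$, with coefficients $\int\cos(\tau_0 t')h\,dt'$, $\int\sin(\tau_0 t')h\,dt'$ --- this corresponds to the opposite detour and an oscillating piece $c_0\sin(\tau_0 t)\chi_{(0,\infty)}(t)$. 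Either the cutoff in the displayed formula for $\mathcal{G}_0$ carries a sign slip, or the intended ``analogue of Theorem~\ref{thm:Hardy-potential}\,b.'' in part~\emph{ii.}\ refers to an expansion at $t\to-\infty$ and the application then adds a suitable multiple of $\cos(\tau_0 t),\sin(\tau_0 t)$ to pass from one normalization to the other. Your construction is internally consistent with the stated $\chi_{(-\infty,0)}$; just be aware that the two conventions differ by a kernel element $a\cos(\tau_0 t)+b\sin(\tau_0 t)$, and that the choice of detour is exactly the choice of which half-line carries the bounded oscillation.
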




As we have mentioned,
Theorem \ref{thm:Hardy-potential} can be interpreted in terms of the variation of constants method. This in turn allows the reformulation of the non-local problem \eqref{problem-m} into an infinite system of second order ODE's. Since the theory is particularly nice when all the $\tau_j$ are zero, we present it separately from the general case, in which complex notations are used.


\begin{cor}\label{cor:ODEsystem}
Take $w$ as in \eqref{Green0} from Theorem \ref{thm:Hardy-potential}. In the special case that $\tau_j=0$ for all $j$, then
\begin{equation}\label{w-sum}
w(t)=\sum_{j=0}^\infty c_j w_j(t),
\end{equation}
where
\begin{equation*}
w_j(t):=\int_{\mathbb R}e^{-\sigma_j |t-t'|}h(t')\,dt'.
\end{equation*}
Moreover, $w_j$ is a particular solution to the second order ODE
 \begin{equation}\label{ODE-j}
w_j''(t)-\sigma_j^2w_j(t)=-2\sigma_j h(t).
\end{equation}
\end{cor}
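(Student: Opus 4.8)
The plan is to deduce Corollary \ref{cor:ODEsystem} directly from the structure of the Green's function in Theorem \ref{thm:Hardy-potential}(a), using the elementary observation that the kernel $e^{-\sigma|t|}$ is, up to a constant, the Green's function of the one-dimensional operator $\partial_{tt}-\sigma^2$. First I would record the decomposition: when $\tau_j=0$ for all $j$, the formula for $\mathcal{G}_m$ reduces to $\mathcal{G}_m(t)=\sum_{j=0}^\infty c_j e^{-\sigma_j|t|}$, since the sine and cosine terms collapse ($\cos(\tau_j|t|)=1$, $\sin(\tau_j|t|)=0$), and by convention we absorb the $\sin$ coefficients $c_j'$ (which multiply $\sin 0 = 0$). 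Substituting into \eqref{Green0} and interchanging sum and integral — justified by the decay $\sigma_j\to\infty$, the summability of the $c_j$ coming from part (a), and the decay hypothesis \eqref{decay-h} on $h$ together with $\delta,\delta_0>-\sigma_0$, which via Lemma \ref{lem:convolution} makes each convolution finite and the series of convolutions absolutely convergent — gives precisely \eqref{w-sum} with $w_j(t)=\int_{\mathbb R}e^{-\sigma_j|t-t'|}h(t')\,dt'$.

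Next I would verify \eqref{ODE-j}. The cleanest route is a direct computation: split $w_j(t)=\int_{-\infty}^t e^{-\sigma_j(t-t')}h(t')\,dt'+\int_t^{+\infty}e^{-\sigma_j(t'-t)}h(t')\,dt'$, differentiate under the integral sign (the boundary terms at $t'=t$ cancel since both integrands equal $h(t)$ there), obtaining
\[
w_j'(t)=-\sigma_j\int_{-\infty}^t e^{-\sigma_j(t-t')}h(t')\,dt'+\sigma_j\int_t^{+\infty}e^{-\sigma_j(t'-t)}h(t')\,dt'.
\]
Differentiating once more, the boundary terms now contribute $-\sigma_j h(t)-\sigma_j h(t)=-2\sigma_j h(t)$, while the integral terms reassemble to $\sigma_j^2 w_j(t)$; hence $w_j''-\sigma_j^2 w_j=-2\sigma_j h$. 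Equivalently, one may simply note that $\tfrac{1}{2\sigma_j}e^{-\sigma_j|t|}$ is the fundamental solution of $-\partial_{tt}+\sigma_j^2$ on $\mathbb R$, i.e. $(-\partial_{tt}+\sigma_j^2)\big(\tfrac{1}{2\sigma_j}e^{-\sigma_j|t|}\big)=\delta_0$, so that $w_j=2\sigma_j\big(\tfrac{1}{2\sigma_j}e^{-\sigma_j|\cdot|}\big)\ast h$ solves $(-\partial_{tt}+\sigma_j^2)w_j=2\sigma_j h$, which is \eqref{ODE-j}.

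I would present the distributional computation as the primary argument and the pointwise one as the verification, since the former makes transparent why the factor $2\sigma_j$ appears. The only genuine points requiring care — and the main (mild) obstacle — are the analytic justifications: that the series $\sum_j |c_j|$ (or $\sum_j |c_j|/\sigma_j$, as needed) converges so that term-by-term manipulation is legitimate, and that each $w_j$ is twice differentiable with the differentiation-under-the-integral valid. Both follow from Theorem \ref{thm:Hardy-potential}(a) (which already asserts $\mathcal{G}_m\in\mathcal C^\infty$ away from $0$ and gives the explicit constants) and from Lemma \ref{lem:convolution} applied with $a=\sigma_j$; since these ingredients are quoted from \cite{Ao-Chan-DelaTorre-Fontelos-Gonzalez-Wei}, the corollary's proof is essentially a short, self-contained unwinding of definitions, and I would keep it to a single short paragraph in the final text.
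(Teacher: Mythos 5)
Your proof is correct and follows the same (essentially unique, and in the paper, unstated) route: when $\tau_j=0$ the Green's function from Theorem~\ref{thm:Hardy-potential}\emph{a.}~collapses to $\sum_j c_j e^{-\sigma_j|t|}$, and each $e^{-\sigma_j|\cdot|}\ast h$ is a particular solution of $\partial_{tt}-\sigma_j^2$ applied to $-2\sigma_j h$ because $\tfrac{1}{2\sigma_j}e^{-\sigma_j|t|}$ is the fundamental solution of $-\partial_{tt}+\sigma_j^2$. The paper states the corollary without any proof, so your write-up is in fact more detailed than the source; the analytic justifications you flag (absolute convergence of the series and differentiation under the integral, both via Lemma~\ref{lem:convolution} and the decay hypothesis on $h$) are exactly the right points to make explicit.
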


In general, such $w$ can be written as a real part of a series whose terms solve a complex-valued second order ODE.

\begin{cor}\label{cor:ODEsystem-complex}
Given $w$ as in \eqref{Green0}, we define the complex-valued functions $w_j:\mathbb{R}\to\mathbb{C}$ by
\[
w_j=e^{-(\sigma_j+i\tau_j)|\cdot|}
    \ast h.
\]
They satisfy the second order ODE
\begin{equation*}\label{eq:w_j-complex}
-\dfrac{1}{\sigma_j+i\tau_j}w_j''
+(\sigma_j+i\tau_j)w_j
=2h,
\end{equation*}
and the original (real-valued) function $w$ can be still recovered by 
\begin{equation*}\label{eq:w-sum-complex}
w(t)
=\mathrm{Re\,}\sum_{j=0}^{\infty}
	c_jw_j(t).
\end{equation*}
\end{cor}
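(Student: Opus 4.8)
The plan is to prove the two assertions of the corollary separately, both resting on the elementary observation that, for any $\lambda\in\mathbb{C}$ with $\operatorname{Re}\lambda>0$, the kernel $G_\lambda(t):=e^{-\lambda|t|}$ is twice the fundamental solution of the constant-coefficient operator $P_\lambda:=-\tfrac{1}{\lambda}\partial_{tt}+\lambda$.

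\textbf{The ODE.} I would first verify that $P_\lambda G_\lambda=2\delta_0$ in $\mathcal{D}'(\mathbb{R})$. For $t\neq0$ one has $G_\lambda''(t)=\lambda^2 G_\lambda(t)$, so $P_\lambda G_\lambda=0$ away from the origin; at $t=0$ the derivative jumps by $G_\lambda'(0^+)-G_\lambda'(0^-)=-2\lambda$, hence $G_\lambda''=\lambda^2 G_\lambda-2\lambda\,\delta_0$ distributionally and $P_\lambda G_\lambda=-\tfrac1\lambda(\lambda^2 G_\lambda-2\lambda\delta_0)+\lambda G_\lambda=2\delta_0$. Applying this with $\lambda=\sigma_j+i\tau_j$ and convolving with $h$ gives $P_\lambda w_j=(P_\lambda G_\lambda)\ast h=2\,\delta_0\ast h=2h$, which is exactly the stated second order ODE. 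The only point requiring care is the legitimacy of $w_j=G_\lambda\ast h$ and of moving $P_\lambda$ inside the convolution: since $|G_\lambda(t)|=e^{-\sigma_j|t|}$ with $\sigma_j\geq\sigma_0>0$, and $h$ satisfies \eqref{decay-h} with $\delta,\delta_0>-\sigma_0\geq-\sigma_j$, the integrand $G_\lambda(t-t')h(t')$ is absolutely integrable in $t'$, locally uniformly in $t$, so $w_j$ is well defined and of class $C^1$, and $w_j''=\lambda^2 w_j-2\lambda h$ holds in the distributional sense (classically when $h$ is continuous).

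\textbf{The recovery formula.} For the second assertion I would simply take real parts in the Green's representation \eqref{Green0}. Since $h$ is real-valued and, for real $s$, $\operatorname{Re}\!\big(c\,e^{-(\sigma_j+i\tau_j)|s|}\big)=e^{-\sigma_j|s|}\big((\operatorname{Re}c)\cos(\tau_j|s|)+(\operatorname{Im}c)\sin(\tau_j|s|)\big)$, it suffices to promote the constant $c_j$ of the corollary to the complex number $c_j+ic_j'$ built from the real coefficients $c_j,c_j'$ of Theorem \ref{thm:Hardy-potential}\,a.\ (in particular $c_0$ stays real, as $\tau_0=0$). With this choice, $\operatorname{Re}\!\big(c_j w_j(t)\big)=\int_{\mathbb{R}}e^{-\sigma_j|t-t'|}\big(c_j\cos(\tau_j|t-t'|)+c_j'\sin(\tau_j|t-t'|)\big)h(t')\,dt'$, and summing over $j\geq0$ — interchanging the sum with the integral, which is justified by the absolute, locally uniform convergence of the series defining $\mathcal{G}_m$ recorded in Theorem \ref{thm:Hardy-potential}\,a.\ together with the decay of $h$ — reproduces $\int_{\mathbb{R}}\mathcal{G}_m(t-t')h(t')\,dt'=w(t)$. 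This yields $w=\operatorname{Re}\sum_{j\geq0}c_j w_j$; the special case $\tau_j\equiv0$ recovers Corollary \ref{cor:ODEsystem} with the ODE \eqref{ODE-j}.

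I do not expect a genuine obstacle here: the entire content is the identification of $e^{-\lambda|t|}$ with the fundamental solution of $-\tfrac1\lambda\partial_{tt}+\lambda$ (with the jump condition producing the factor $2$) and the bookkeeping of real and imaginary parts. The only mild technicality is the convergence bookkeeping above — absolute integrability of each convolution and absolute, locally uniform summability of the series — all of which follow from the hypothesis $\delta,\delta_0>-\sigma_0$ and the location and growth of $\{\sigma_j,\tau_j\}$ and summability of $\{c_j,c_j'\}$ already established in Theorems \ref{thm:poles} and \ref{thm:Hardy-potential}.
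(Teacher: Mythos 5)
Your argument is correct and is exactly the proof the paper leaves implicit (the corollary is stated without one): identify $e^{-\lambda|t|}$ as twice the fundamental solution of $-\lambda^{-1}\partial_{tt}+\lambda$ via the jump of $G_\lambda'$ at the origin, convolve with $h$, and then take real parts of the termwise complex exponentials to reconstitute the real kernel $\mathcal G_m$. One detail you rightly make explicit, which the paper glosses over, is that the coefficients $c_j$ in the recovery formula $w=\mathrm{Re}\sum_j c_jw_j$ must be read as the complex combinations $c_j+ic_j'$ of the two real constants appearing in $\mathcal G_m$ from Theorem \ref{thm:Hardy-potential}(a); read literally with real $c_j$, the sine terms would be lost. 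The convergence bookkeeping you invoke (absolute integrability of each convolution for $\delta,\delta_0>-\sigma_0$, and summability of the series for $\mathcal G_m$) is indeed what justifies the interchange of sum and integral and the distributional computation $w_j''=\lambda^2 w_j-2\lambda h$.
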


Another interesting fact is that, for $\kappa=0$, equation \eqref{Green0} is simply the expansion of the Riesz potential for the fractional Laplacian. Indeed, let us recall the following  (this is a classical formula; see, for instance, \cite{Calvez-Carrillo-Hoffmann} and the references therein):

\begin{prop}\label{prop:Riesz}
Assume that $u$ is the Riesz potential of a compactly supported radial density $\tilde h=\tilde h(r)$ in $\mathbb R^n$. It is always possible to write (up to multiplicative constant)
\begin{equation}\label{Riesz}
u=|x|^{2\gamma-n}\ast \tilde h=r^{2\gamma-n}\int_{\eta=0}^{r}\vartheta_{n,\gamma}
\left(\frac{\eta}{r}\right)\tilde h(\eta)\eta^{n-1}d\eta+
\int_{\eta=r}^{\infty}\eta^{2\gamma-n}\vartheta_{n,\gamma}\left(\frac{r}{\eta}\right)\tilde h(\eta)\eta^{n-1}d\eta,
\end{equation}
where
\[
\vartheta_{n,\gamma}(z)=
d_{n}\cdot\Hyperg\left(-\gamma+\tfrac{n}{2},1-\gamma;\tfrac{n}{2};z^{2}\right).
\]
\end{prop}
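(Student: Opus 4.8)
The plan is to reduce everything to the classical formula for the Riesz kernel and then reorganize the convolution integral in polar coordinates, using the known ``addition-type'' identity that expresses the spherical average of $|x-y|^{2\gamma-n}$ as a hypergeometric function. First I would recall that the Riesz potential of a density $\tilde h$ is, up to the usual normalizing constant $c_{n,\gamma}=\frac{\Gamma(\frac{n}{2}-\gamma)}{4^\gamma \pi^{n/2}\Gamma(\gamma)}$, given by $u(x)=\int_{\mathbb R^n}|x-y|^{2\gamma-n}\tilde h(y)\,dy$; since $\tilde h$ is radial, $\tilde h(y)=\tilde h(|y|)$, I would write $y=\eta\omega$ with $\eta=|y|>0$, $\omega\in\mathbb S^{n-1}$, so that $u(r)=\int_0^\infty \tilde h(\eta)\eta^{n-1}\Big(\int_{\mathbb S^{n-1}}|r e_1-\eta\omega|^{2\gamma-n}\,d\omega\Big)\,d\eta$, where $e_1$ is any fixed unit vector (legitimate since $u$ is radial).

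The key step is to evaluate the inner spherical integral $\Phi(r,\eta):=\int_{\mathbb S^{n-1}}|re_1-\eta\omega|^{2\gamma-n}\,d\omega$. Writing $|re_1-\eta\omega|^2=r^2+\eta^2-2r\eta\cos\psi$ with $\psi$ the angle between $e_1$ and $\omega$, and integrating out the remaining $(n-2)$-sphere, one gets $\Phi(r,\eta)=|\mathbb S^{n-2}|\int_0^\pi (r^2+\eta^2-2r\eta\cos\psi)^{\gamma-n/2}(\sin\psi)^{n-2}\,d\psi$. This is a standard integral: after the substitution $s=\cos\psi$ it is a Gegenbauer/Euler-type integral whose value is a Gauss hypergeometric function. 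Concretely, for $\eta<r$ one factors out $r^{2\gamma-n}$ to obtain $\Phi(r,\eta)=c_n\, r^{2\gamma-n}\,\Hyperg\!\big(\tfrac{n}{2}-\gamma,\tfrac{1}{2};\tfrac{n}{2};(\eta/r)^2\big)$ after using a quadratic transformation, or — in the form the statement uses — $\Phi(r,\eta)=c_n\,r^{2\gamma-n}\,\Hyperg\!\big(-\gamma+\tfrac n2,1-\gamma;\tfrac n2;(\eta/r)^2\big)$; the two are identified through the Euler transformation $\Hyperg(a,b;c;z)=(1-z)^{c-a-b}\Hyperg(c-a,c-b;c;z)$ together with the evaluation of the elementary prefactor. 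By the symmetry $\Phi(r,\eta)=\Phi(\eta,r)$ under swapping $r\leftrightarrow\eta$, the case $\eta>r$ gives instead $\Phi(r,\eta)=c_n\,\eta^{2\gamma-n}\,\Hyperg\!\big(-\gamma+\tfrac n2,1-\gamma;\tfrac n2;(r/\eta)^2\big)$. Splitting the $\eta$-integral at $\eta=r$ and inserting these two expressions yields exactly \eqref{Riesz} with $\vartheta_{n,\gamma}(z)=d_n\,\Hyperg(-\gamma+\tfrac n2,1-\gamma;\tfrac n2;z^2)$, the constant $d_n$ absorbing $|\mathbb S^{n-2}|$, the hypergeometric normalization, and the Riesz constant $c_{n,\gamma}$ (hence the ``up to multiplicative constant'' caveat).

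The main obstacle is the bookkeeping in the hypergeometric evaluation of $\Phi(r,\eta)$: getting the parameters $(-\gamma+\tfrac n2,\,1-\gamma;\,\tfrac n2)$ exactly right requires choosing the correct quadratic/Euler transformation among several equivalent forms of the Euler integral $\int_0^\pi (r^2+\eta^2-2r\eta\cos\psi)^{\gamma-n/2}(\sin\psi)^{n-2}\,d\psi$, and then checking that the prefactor powers of $r$ and $\eta$ collapse cleanly so that the two pieces share a single profile function $\vartheta_{n,\gamma}$. Convergence is not a serious issue: near $\eta=r$ the integrand $|re_1-\eta\omega|^{2\gamma-n}$ is locally integrable on $\mathbb S^{n-1}$ because $2\gamma-n>-(n-1)$, and compact support of $\tilde h$ handles the behavior at $\eta=0$ and $\eta=\infty$. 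Once $\Phi$ is identified, the rest is a direct substitution, so I would present the spherical-integral computation as the heart of the argument and relegate the constant-chasing to a remark that the identity holds up to an explicit multiplicative constant depending only on $n$ and $\gamma$.
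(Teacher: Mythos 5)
The paper does not prove this proposition: it is quoted as a classical formula, with a pointer to the reference \cite{Calvez-Carrillo-Hoffmann}. Your proposal supplies the standard derivation and the structure is correct: pass to polar coordinates, compute the spherical average $\Phi(r,\eta)=\int_{\mathbb S^{n-1}}|re_1-\eta\omega|^{2\gamma-n}\,d\omega$, identify it (after rescaling by $r^{2\gamma-n}$ or $\eta^{2\gamma-n}$, whichever is larger) with a Gauss hypergeometric function in $(\eta/r)^2$ or $(r/\eta)^2$, invoke the obvious symmetry $\Phi(r,\eta)=\Phi(\eta,r)$, and split the radial integral at $\eta=r$. I checked the parameter identification by Taylor expansion; the classical Gegenbauer-type formula
\begin{equation*}
\int_0^\pi \bigl(1-2z\cos\psi+z^2\bigr)^{\gamma-\frac{n}{2}}(\sin\psi)^{n-2}\,d\psi
= B\!\left(\tfrac{n-1}{2},\tfrac12\right)\,\Hyperg\!\left(\tfrac{n}{2}-\gamma,\,1-\gamma;\,\tfrac{n}{2};\,z^2\right),\quad |z|<1,
\end{equation*}
gives the stated parameters $(-\gamma+\tfrac n2,\,1-\gamma;\,\tfrac n2)$ \emph{directly} (note that the weight in the $s=\cos\psi$ variable is $(1-s^2)^{(n-3)/2}$, not $(1-s^2)^{(n-2)/2}$; this is a common source of off-by-$\tfrac12$ errors in the hypergeometric parameters). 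Your side remark that one first obtains $\Hyperg(\tfrac n2-\gamma,\tfrac12;\tfrac n2;z^2)$ and then passes to the stated form via the Euler transformation is not right: applying $\Hyperg(a,b;c;z)=(1-z)^{c-a-b}\Hyperg(c-a,c-b;c;z)$ to $\Hyperg(\tfrac n2-\gamma,\tfrac12;\tfrac n2;z^2)$ produces $(1-z^2)^{\gamma-\frac12}\Hyperg(\gamma,\tfrac{n-1}{2};\tfrac n2;z^2)$, not the target, and the two forms are in fact not equal. That step should simply be dropped; the correct parameters come out of the Euler/Gegenbauer integral in one shot. With that correction the argument is complete and correct, and the multiplicative constant $d_n$ indeed depends only on $n$ (the $\gamma$-dependent Riesz normalization sits outside, justifying the ``up to multiplicative constant'' caveat).
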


In order to relate to our setting, first we need to shift the information from $r=\infty$ to the origin, so we set $t=\log r$ (note the sign change with respect to the above!). If we denote $u=r^{-\frac{n-2\gamma}{2}}w$, $\tilde h=r^{-\frac{n+2\gamma}{2}}h$, and take the Taylor expansion of the Hypergeometric function in \eqref{Riesz},  we obtain an expansion of the type given in \eqref{w-sum}, since in this case we have
\begin{equation*}\label{inditial:zero}
\sigma_j=\dfrac{n-2\gamma}{2}+2j, \quad \tau_j=0,\quad j=0,1,\ldots
\end{equation*}
Thus we can interpret  \eqref{w-sum} as the generalization of \eqref{Riesz} in the presence of a potential term with $\kappa\neq 0$.

\subsection{Frobenius theorem}\label{subsection:Frobenius}

 In the following, we will concentrate just on radial solutions (which correspond to the $m=0$ projection above), but the same arguments would work for any $m$. In particular, we study the kernel of the fractional Laplacian operator with a radially symmetric Hardy-type potential, which is given by the non-local ODE
\begin{equation}\label{equation:kernel-phi}
L \phi=(-\Delta)^\gamma \phi-\frac{\mathcal V(r)}{r^{2\gamma}}\phi=0,\quad \phi=\phi(r),
\end{equation}
We have shown that this equation is equivalent to
\begin{equation}\label{equation:kernel}
\mathcal L_0 w=P_\gamma^{(0)}w-\mathcal V(t)w=0,\quad w=w(t),
\end{equation}
where we have denoted $\phi=r^{-\frac{n-2\gamma}{2}}w$, $r=e^{-t}$.

The arguments here are based on an iteration scheme from  \cite{Ao-Chan-DelaTorre-Fontelos-Gonzalez-Wei} (Sections 6 and 7). However, the restatement of Theorem \ref{thm:Hardy-potential} that we have presented here makes the proofs more transparent, so we give here full details for convenience of the reader.

 Assume, for simplicity, that we are in the stable case, this is, in the setting of Theorem \ref{thm:Hardy-potential}. In the unstable case, we have similar results by applying Proposition \ref{prop:unstable}.

We fix any radially symmetric, smooth potential $\mathcal V(t)$ with the asymptotic behavior
\begin{equation}\label{potential1}
\mathcal V(t)=
\begin{cases}
\kappa+O(e^{-qt}),
 & \text{if } t\to+\infty, \\
O(e^{q_1 t}),
 & \text{if } t\to -\infty,
\end{cases}
\end{equation}
 for some $q,q_1>0$, and such that  $0\leq \kappa<\Lambda_{n,\gamma}$.

The indicial roots for problem \eqref{equation:kernel} as $t\to +\infty$ are calculated by looking at the limit problem
\begin{equation*}
P_\gamma^{(0)}w-\kappa w=0,\quad w=w(t),
\end{equation*}
which are given in Theorem \ref{thm:poles}. Indeed, these are of the form
 \begin{equation*}
 \{\sigma_j \pm i\tau_j\}, \, \{-\sigma_j \pm i\tau_j\},\quad \tau_0=0,\,\tau_j=0 \text{ for }j \text{ large enough}.
 \end{equation*}

The results from the previous section imply that the behavior of solutions to \eqref{equation:kernel} are governed by the indicial roots of the problem. This is a Frobenius type theorem for a non-local equation.

\begin{prop}[\cite{Ao-Chan-DelaTorre-Fontelos-Gonzalez-Wei}]\label{prop:behavior-indicial}
Fix any potential $\mathcal V(t)$ as above. Let  $w=w(t)$ be any solution to \eqref{equation:kernel}
satisfying that
$w=O(e^{-\alpha_0 |t|})$ as $|t|\to\infty$ for some $\alpha_0>-\sigma_0$. Then there exists a non-negative integer $j$ such that either
\[
w(t)=(a_j+o(1)) e^{-\sigma_j t}
	\quad \text{ as }
t\to+\infty,
\]
for some real number $a_j\neq 0$, or
\[
w(t)=
	\left(
		a_j^1 \cos(\tau_j t)
		+ a_j^2 \sin(\tau_j t)
		+ o(1)
	\right)
	e^{-\sigma_j t},
\]
for some real numbers $a_j^1, a_j^2$ not vanishing simultaneously.
\end{prop}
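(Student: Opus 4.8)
The plan is to treat \eqref{equation:kernel} as a perturbation of the constant-coefficient Hardy equation studied in Theorems \ref{thm:poles} and \ref{thm:Hardy-potential}. Writing $P_\gamma^{(0)}w-\mathcal V(t)w=(P_\gamma^{(0)}w-\kappa w)-(\mathcal V(t)-\kappa)w$, the equation becomes $\mathcal L_\kappa w=h$, where $\mathcal L_\kappa:=P_\gamma^{(0)}-\kappa$ and $h(t):=(\mathcal V(t)-\kappa)w(t)$. By \eqref{potential1}, $\mathcal V-\kappa=O(e^{-qt})$ as $t\to+\infty$ and $\mathcal V-\kappa$ stays bounded as $t\to-\infty$, so the a priori decay $w=O(e^{-\alpha_0|t|})$ already forces $h(t)=O(e^{-(q+\alpha_0)t})$ as $t\to+\infty$ and $h(t)=O(e^{\alpha_0 t})$ as $t\to-\infty$. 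Since $q>0$ and $\alpha_0>-\sigma_0$, the exponents $\delta=q+\alpha_0$ and $\delta_0=\alpha_0$ both exceed $-\sigma_0$, which is precisely the hypothesis \eqref{decay-h} needed to apply Theorem \ref{thm:Hardy-potential}.

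The first step is to identify $w$ with the Green's potential of $h$. The function $w_p:=\mathcal G_0\ast h$ from Theorem \ref{thm:Hardy-potential}\,a solves $\mathcal L_\kappa w_p=h$ and satisfies $w_p=O(e^{-\beta|t|})$ for some $\beta>-\sigma_0$, so $w-w_p$ solves the homogeneous equation $\mathcal L_\kappa(\cdot)=0$ with decay $O(e^{-\beta'|t|})$, $\beta'>-\sigma_0$. By Corollary \ref{remark:homogeneous} every such homogeneous solution is a superposition of the functions $e^{\pm\sigma_j t}\cos(\tau_j t)$ and $e^{\pm\sigma_j t}\sin(\tau_j t)$, each of which grows at least like $e^{\sigma_0|t|}$ on one of the two ends unless its coefficient vanishes; hence $w-w_p\equiv0$, i.e. $w=\mathcal G_0\ast h$, and the refined expansion of Theorem \ref{thm:Hardy-potential}\,b is available for $w$.

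The second step is a bootstrap. From $h=O(e^{-(q+\alpha_0)t})$, Lemma \ref{lem:convolution} applied to $w=\mathcal G_0\ast h$ (with $\mathcal G_0=O(e^{-\sigma_0|t|})$) gives $w=O(e^{-\min\{\sigma_0,\,q+\alpha_0\}t})$; iterating, and absorbing the logarithmic loss at an exact resonance by shrinking the exponent infinitesimally, improves this to $w=O(e^{-\sigma_0 t})$, hence $h=O(e^{-(q+\sigma_0)t})$. Theorem \ref{thm:Hardy-potential}\,b with $J=0$ then yields $w(t)=c_0C_0\,e^{-\sigma_0 t}+O(e^{-\min\{q+\sigma_0,\sigma_1\}t})$ with $C_0=\int_{\mathbb R}e^{\sigma_0 t'}h(t')\,dt'$. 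If $C_0\neq0$ one concludes with $j=0$ (and $a_0=c_0C_0\neq0$, as $c_0\neq0$); if $C_0=0$ the remainder is strictly below $e^{-\sigma_0 t}$, and since the coefficients $C_j,C_j^1,C_j^2$ are integrals of the now \emph{fixed} function $h$ (so $C_0=0$ persists), one bootstraps again to $w=O(e^{-\sigma_1 t})$, applies Theorem \ref{thm:Hardy-potential}\,b with $J=1$, and so on. At the first index $j$ with nonvanishing leading coefficient — $C_j\neq0$ when $\tau_j=0$, giving the first alternative with $a_j=c_jC_j$, or $(C_j^1,C_j^2)\neq(0,0)$ when $\tau_j\neq0$, giving the second with $a_j^\ell=c_jC_j^\ell$ — that term dominates all subsequent ones because $\sigma_{j+1}>\sigma_j$ and $q>0$, which is exactly the assertion.

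The hard part is showing that this iteration actually terminates, i.e. that a solution $w\not\equiv0$ cannot have all the leading coefficients vanishing (in which case $w$ would decay faster than $e^{-\sigma_j t}$ for every $j$). I would settle this on the Fourier side: the coefficients $C_j,C_j^1,C_j^2$ are, up to nonzero factors, the values of $\widehat h$ at the poles $\pm\tau_j\pm i\sigma_j$ of $(\Theta_0(z)-\kappa)^{-1}$, so their simultaneous vanishing makes $\widehat w=\widehat h/(\Theta_0-\kappa)$ holomorphic across every horizontal line reached by $\widehat h$; feeding the resulting super-exponential decay of $w$ back into $h=(\mathcal V-\kappa)w$ and iterating makes $w$ decay faster than any exponential as $t\to+\infty$, after which a Paley--Wiener/unique-continuation argument for $\mathcal L_\kappa$ forces $w\equiv0$, a contradiction. (Equivalently, one restricts the statement to $w\not\equiv0$ and invokes unique continuation directly.) The unstable case $\Lambda_{n,\gamma}<\kappa<\Lambda'_{n,\gamma}$ is handled identically, using Proposition \ref{prop:unstable} in place of Theorem \ref{thm:Hardy-potential}.
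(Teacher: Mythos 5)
Your proposal follows the paper's proof essentially step for step: rewrite the equation as $\mathcal L_\kappa w=h$ with $h=(\mathcal V-\kappa)w$, identify $w$ with the Green's potential $\mathcal G_0\ast h$ (since adding any kernel element would violate the a priori decay), bootstrap the decay of $w$ at $+\infty$ through Lemma~\ref{lem:convolution}, and read off the leading coefficient at each level $J$ from Theorem~\ref{thm:Hardy-potential}\,b, iterating whenever the coefficients at level $J$ vanish. The only point of divergence is the termination step: you sketch a Paley--Wiener argument (holomorphy of $\widehat w$ across all horizontal lines $\Rightarrow$ super-exponential decay $\Rightarrow$ $w\equiv 0$), but the last implication is not a pure Fourier fact for a fractional operator with a potential -- it is precisely the strong unique continuation property, which the paper imports from the monotonicity formula of Fall--Felli in the stable case and the Carleman estimates of R\"uland in the unstable case; you do mention this fallback parenthetically, and it is in fact the route the paper takes.
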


We remark that a similar conclusion holds at $-\infty$.

\begin{proof}
Write the equation satisfied by $w$ \eqref{equation:kernel} as
\begin{equation*}
P_\gamma^{(0)} w-\kappa w=(\mathcal V-\kappa)w=:h.
\end{equation*}
Note that, by \eqref{potential1},
\[\begin{split}
w(t)=\begin{cases}
O(e^{-\alpha_0 t})
	& \text{ as } t\to+\infty,\\
O(e^{-\alpha_0 |t|})
	& \text{ as } t\to-\infty,
\end{cases}
\qquad
\mathcal{V}(t)-\kappa=\begin{cases}
O(e^{-qt})
	& \text{ as } t\to+\infty,\\
O(1)
	& \text{ as } t\to-\infty.
\end{cases}
\end{split}\]
We follow closely the proof of Theorem \ref{thm:Hardy-potential} but, this time, since the right hand side $h$ depends on the solution $w$, we need to take that into account in the iteration scheme. By part {\emph a.}, a particular solution $w_p=\mathcal{G}\ast h$ satisfies
\[
w_p(t)=\begin{cases}
O(e^{-\min\{\alpha_0+q,\sigma_0\} t})
	& \text{ as } t\to+\infty,\\
O(e^{-\alpha_0 |t|})
	& \text{ as } t\to-\infty.
\end{cases}
\]
This is in fact the behavior of $w$, since the addition of any kernel element would create an exponential growth of order at least $\sigma_0$ which is not permitted by assumption. As a consequence, we obtain a better decay of $w$ as $t\to+\infty$, so we can iterate this argument to arrive at
\[
w(t)=\begin{cases}
O(e^{-\sigma_0 t})
	& \text{ as } t\to+\infty,\\
O(e^{-\alpha_0 |t|})
	& \text{ as } t\to-\infty.
\end{cases}
\]
Using Theorem \ref{thm:Hardy-potential} \emph{b.} with $J=0$, we have (in its notation)
\[
w(t)=\begin{cases}
	\displaystyle
	c_0 C_0
	e^{-\sigma_0 t}
	+O(e^{-\min\{\sigma_0+q,\sigma_1\} t})
		& \text{ as } t\to+\infty,\\
	O(e^{-\alpha_0|t|})
		& \text{ as } t\to-\infty.
\end{cases}
\]
When 
$C_0\neq0$, we have that
$$w(t)=(a_0+o(1)) e^{-\sigma_0 t}\quad
	\text{ as }\quad t\to +\infty,$$
for a non-zero constant $a_0=c_0 C_0$.

Otherwise, in the case $C_0=0$, we iterate this process to yield
\[
w(t)=\begin{cases}
O(e^{-\sigma_1 t})
	& \text{ as } t\to+\infty,\\
O(e^{-\alpha_0|t|})
	& \text{ as } t\to-\infty.
\end{cases}
\]
An application of Theorem \ref{thm:Hardy-potential} \emph{b.} with $J=1$ yields
\[
w(t)=\begin{cases}
c_1\left(
    C_1^1 \cos(\tau_1 t)
    +C_1^2 \sin(\tau_1 t)
\right)e^{-\sigma_1 t}
+O(e^{-\min\{\sigma_1+q,\sigma_2\}t})
    & \text{ as } t\to+\infty,\\
O(e^{-\alpha_0|t|})
    & \text{ as } t\to-\infty,
\end{cases}
\]
which has the exact (oscillating if $\tau_1>0$) behavior of order $e^{-\sigma_1 t}$ as $t\to+\infty$ unless both coefficients $C_1^1,C_1^2$ vanish, in which case the decay of $w$ is further improved through an iteration, i.e.
\[
w(t)=\begin{cases}
O(e^{-\sigma_2 t})
	& \text{ as } t\to+\infty,\\
O(e^{-\alpha_0|t|})
	& \text{ as } t\to-\infty.
\end{cases}
\]

We use induction on $J$, the number of additional isolated terms in $\mathcal{G}$. Depending on the vanishing properties of the coefficients $D_J^1,D_J^2$, this gives either an exact (signed or oscillating) behavior of $w$ as $e^{-\sigma_J t}$, or $w\equiv 0$ by unique continuation when no such $J$ exists. In the stable case, unique continuation was proved in \cite{Fall-Felli} using a monotonicity formula, while in the unstable case it follows from \cite{Ruland}, where Carleman estimates were the crucial ingredient.
\end{proof}

\section{Non-degeneracy}\label{section:non-degeneracy}

For the critical case $p=\frac{n+2\gamma}{n-2\gamma}$, non-degeneracy for the standard bubble $w_{\infty}$ (given in Proposition \ref{bubble}) has been considered separately by \cite{Frank:uniqueness,Chen-Frank-Weth} and \cite{dps}.

We thus restrict to the subcritical case $$p\in\left(\frac{n}{n-2\gamma},\frac{n+2\gamma}{n-2\gamma}\right).$$
Let $u_*$ be the model solution constructed in Theorem \ref{existence}, and set $L_*$ be the linearized operator for \eqref{problem} around this particular solution:
\begin{equation*}\label{eq:linearized-phi}
L_*\varphi=(-\Delta)^\gamma \varphi -\frac{\mathcal V_*}{r^{2\gamma}}\varphi,
\end{equation*}
for the potential $\mathcal V_*=r^{2\gamma}pA(u_*)^{p-1}$. Recall that $\mathcal V_*$ converges to the constant $\kappa:=pA$ as $r\to 0$.

In terms of the $t$ variable we can write as follows: set $r=e^{-t}$,
\begin{equation*}
\phi=r^{\frac{n-2\gamma}{2}}w,\quad u_*=r^{-\frac{2\gamma}{p-1}}v_*.
\end{equation*}
The linearized operator is now
\begin{equation*}
\mathcal L_* w:= r^{\frac{n+2\gamma}{2}}L_* \phi=P_\gamma^{(0)} w-\mathcal V_*(t)w,
\end{equation*}
for the potential
\begin{equation}\label{potential}
\mathcal V_*(t)=pA(v_*)^{p-1}=
\begin{cases}
\kappa+O(e^{-qt}),
 &\text{if } t\to+\infty, \\
O(e^{q_1 t}),
 & \text{if } t\to -\infty,
\end{cases}
\end{equation}
for some $q,q_1>0$.
\medskip

Now we look for radially symmetric solutions to
\begin{equation}\label{problem20}L_* \phi=0,\quad \phi=\phi(r)\end{equation}
which, in terms of the $t$ variable, is equivalent to
\begin{equation*}
P_\gamma^{(0)} w-\mathcal V_*(t)w=0, \quad w=w(t).
\end{equation*}
Recall that the potential $\mathcal V_*$ is given in \eqref{potential} and it is of the type considered in Proposition \ref{prop:behavior-indicial}, so we know that the asymptotic expansion of solutions are governed by the indicial roots of the problem. These are given by:

\begin{lemma}[\cite{Ao-Chan-DelaTorre-Fontelos-Gonzalez-Wei}]
\label{lem:num}
Consider the equation \eqref{equation:kernel} for the potential $\mathcal V_*$ as in \eqref{potential}. Then:
\begin{itemize}
\item As $t\to+\infty$, the indicial roots for  the problem $P_\gamma^{(0)}w-\kappa w=0$ are given by  sequences $\{\sigma_j\pm i\tau_j\}_{j=0}^\infty$, $\{-\sigma_j\pm i\tau_j\}_{j=0}^\infty$. Moreover, there exists $p_0$ such that for $p<p_0$, we are in the setting of Theorem \ref{thm:Hardy-potential} (stable case), while for $p>p_0$, we are in the setting of Proposition \ref{prop:unstable} (unstable case).
\item As $t\to-\infty$, the indicial roots for the $P_\gamma^{(0)}w=0$ are given by  sequences
      $\{\pm\varsigma_j\}_{j=0}^\infty$, where $\varsigma_j=\frac{n-2\gamma}{2}+j$.
\end{itemize}
\end{lemma}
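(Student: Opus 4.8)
The strategy is to reduce the computation of the indicial roots to finding the complex zeros of the Fourier symbols produced in Proposition~\ref{prop:symbol} and Theorem~\ref{thm:poles}, working separately at the two ends $t\to+\infty$ and $t\to-\infty$, since the potential $\mathcal V_*(t)$ has different limiting behavior there by \eqref{potential}.

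First, for the end $t\to+\infty$, I would argue that the indicial roots of \eqref{equation:kernel} with potential $\mathcal V_*$ coincide with those of the constant-coefficient limit equation $P_\gamma^{(0)}w-\kappa w=0$, $\kappa=pA$, because $\mathcal V_*(t)=\kappa+O(e^{-qt})$ and a Frobenius-type perturbation (as in Proposition~\ref{prop:behavior-indicial}) does not shift the leading exponents. The indicial roots of the limit equation are exactly the poles of $1/(\Theta_0(z)-\kappa)$, so by Theorem~\ref{thm:poles} they form the sequences $\{\sigma_j\pm i\tau_j\}$ and $\{-\sigma_j\pm i\tau_j\}$ described there (with $\tau_0=0$ and $\tau_j=0$ for large $j$), \emph{provided} $0\le\kappa<\Lambda_{n,\gamma}$. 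To locate the stable/unstable threshold I would use the Hardy-inequality characterization \eqref{eq:stability-threshold}: the condition $\kappa=pA\le\Lambda_{n,\gamma}$ is, by \cite{Luo-Wei-Zou} (cited in the discussion around Definition~\ref{defi:stable}), equivalent to $p\le p_1$ on the subcritical interval; setting $p_0:=p_1$ gives exactly the dichotomy in the statement — for $p<p_0$ we are in the regime of Theorem~\ref{thm:poles}/Theorem~\ref{thm:Hardy-potential}, and for $p>p_0$ the first pole crosses into the real axis, placing us in the setting of Proposition~\ref{prop:unstable}.

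Second, for the end $t\to-\infty$, the potential satisfies $\mathcal V_*(t)=O(e^{q_1 t})\to 0$, so the relevant limit equation is $P_\gamma^{(0)}w=0$, i.e.\ the symbol equation $\Theta_0(z)=0$. Since $\Theta_0$ is explicitly a quotient of Gamma functions with numerator $\Gamma(A_0+\tfrac12 zi)\Gamma(A_0-\tfrac12 zi)$ and denominator $\Gamma(B_0+\tfrac12 zi)\Gamma(B_0-\tfrac12 zi)$, where (for $m=0$, $\mu_0=0$) $A_0=\tfrac12+\tfrac\gamma2+\tfrac12(\tfrac n2-1)=\tfrac{n+2\gamma}{4}$ and $B_0=\tfrac{n-2\gamma}{4}$, the zeros of $\Theta_0$ are the poles of the denominator Gammas that are not cancelled by poles of the numerator Gammas. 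The poles of $\Gamma(B_0\pm\tfrac12 zi)$ occur at $B_0\pm\tfrac12 zi=-k$, $k=0,1,2,\dots$, i.e.\ at $z=\mp 2i(B_0+k)=\mp i\big(\tfrac{n-2\gamma}{2}+2k\big)$; a short check that the numerator has no pole there (since $A_0-B_0=\gamma\notin\mathbb Z$) shows these are genuine simple zeros of $\Theta_0$. Re-indexing, but noting that in the expansion \emph{both} the decaying and growing modes at $-\infty$ must be accounted for — i.e.\ one recovers a spacing of $1$ rather than $2$ once the two reflected families are interleaved, exactly as in the Riesz-potential computation recorded after Corollary~\ref{cor:ODEsystem-complex} where $\sigma_j=\tfrac{n-2\gamma}{2}+2j$ for a single family — yields the claimed indicial roots $\{\pm\varsigma_j\}$ with $\varsigma_j=\tfrac{n-2\gamma}{2}+j$.

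The main obstacle I anticipate is the bookkeeping in the second bullet: correctly identifying which poles of the Gamma quotient survive as zeros of $\Theta_0$, verifying they are simple, and — most delicately — reconciling the ``step $2$'' spacing of a naive single-sided count with the ``step $1$'' spacing in the final answer, which comes from the evenness $\Theta_0(z)=\Theta_0(-z)$ forcing symmetric pairs $\pm\varsigma_j$ and from the fact that the homogeneous solutions at $-\infty$ are built from \emph{both} $e^{\varsigma_j t}$ and $e^{-\varsigma_j t}$ modes. A secondary technical point is justifying rigorously that the exponentially small perturbations $O(e^{-qt})$ and $O(e^{q_1t})$ of the potential do not alter the indicial roots; this is precisely the content of the Frobenius iteration in the proof of Proposition~\ref{prop:behavior-indicial}, so it can be invoked rather than redone.
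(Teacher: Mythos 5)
Your treatment of the first bullet is sound and essentially matches what the original reference does: identify the indicial roots at $t\to+\infty$ with the poles of $1/(\Theta_0(z)-\kappa)$ from Theorem~\ref{thm:poles}, read off the homogeneous solutions via Corollary~\ref{remark:homogeneous}, and locate the threshold $p_0=p_1$ through the Hardy characterization \eqref{eq:stability-threshold} and the result of \cite{Luo-Wei-Zou} quoted around Definition~\ref{defi:stable}. The appeal to the Frobenius iteration of Proposition~\ref{prop:behavior-indicial} to absorb the exponentially small perturbation $\mathcal V_*-\kappa$ is also the right move.

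The second bullet contains a genuine gap. Your Gamma computation is correct: with $A_0=\tfrac{n+2\gamma}{4}$, $B_0=\tfrac{n-2\gamma}{4}$, the zeros of $\Theta_0$ sit at $z=\pm 2i(B_0+k)=\pm i\bigl(\tfrac{n-2\gamma}{2}+2k\bigr)$, $k=0,1,\dots$, and are not cancelled since $A_0-B_0=\gamma\notin\mathbb Z$. This gives indicial roots $\pm\bigl(\tfrac{n-2\gamma}{2}+2k\bigr)$, i.e.\ step~$2$, and agrees with the remark after Corollary~\ref{cor:ODEsystem-complex} that $\sigma_j=\tfrac{n-2\gamma}{2}+2j$ when $\kappa=0$. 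But your ``interleaving of the two reflected families'' step, which you use to reduce to step~$1$, does not do what you want: the families $\{\varsigma_j\}$ and $\{-\varsigma_j\}$ live in opposite half-lines, so their union is the symmetric set $\{\pm(\tfrac{n-2\gamma}{2}+2k)\}$ with the gap between $\tfrac{n-2\gamma}{2}$ and $\tfrac{n-2\gamma}{2}+2$ still unfilled. The step-$1$ sequence $\varsigma_j=\tfrac{n-2\gamma}{2}+j$ only appears if one takes the union over \emph{all} spherical-harmonic modes $m$: for mode $m$ one has $B_m=\tfrac{n-2\gamma}{4}+\tfrac m2$, hence zeros at $\pm i\bigl(\tfrac{n-2\gamma}{2}+m+2k\bigr)$, and the union over $m\ge 0$, $k\ge 0$ of $\tfrac{n-2\gamma}{2}+m+2k$ is exactly $\{\tfrac{n-2\gamma}{2}+j\}_{j\ge 0}$. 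As the Lemma is phrased in terms of $P_\gamma^{(0)}$ (the $m=0$ projection), the step-$1$ formula is either a harmless slip inherited from the all-modes statement in \cite{Ao-Chan-DelaTorre-Fontelos-Gonzalez-Wei}, or an abuse of notation; for the $m=0$ radial problem the correct spacing is $2$. The fix is to state this tension plainly and cite the mode-union mechanism, rather than to invoke an ``interleaving'' of $\pm$ families, which is not a valid step.
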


Assume again, without loss of generality, that we are in the stable case and that all the $\tau_j=0$, $j=0,1,\ldots$. \medskip

We know from Theorem \ref{existence} that  $u_*=(1+o(1))r^{-\frac{2\gamma}{p-1}}$ as $r\to 0$. Let us find the next term in the expansion, and show that it is given by the first indicial root. For this, set
$$\overline u=u_*-r^{-\frac{2\gamma}{p-1}},$$
which is a solution of
\begin{equation}\label{equation30}
(-\Delta)^\gamma \overline u=A\left[(u_*)^p-r^{-p\frac{2\gamma}{p-1}}\right].
\end{equation}
Instead of the fractional Laplacian $(-\Delta)^\gamma$ we prefer to use the shifted operator \eqref{conformal-property} and thus we set
\begin{equation}\label{eq:w-bar}
\overline w=r^{\frac{n-2\gamma}{2}}\overline u.
\end{equation}
Then:

\begin{prop}\label{prop:decay-barw} There exists $a>0$ such that
\begin{equation*}
\overline w(t)=(a+o(1)) e^{-\sigma_0 t}\quad\text{as}\quad t\to +\infty.
\end{equation*}
\end{prop}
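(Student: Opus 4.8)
The plan is to apply the Frobenius-type result, Proposition \ref{prop:behavior-indicial}, to the function $\overline w$ defined in \eqref{eq:w-bar}, so the real work is to verify its hypotheses and to rule out the degenerate cases that would force a slower decay rate (or vanishing). First I would rewrite \eqref{equation30} in the $t$ variable using the conformal property \eqref{conformal-property}: multiplying by $r^{\frac{n+2\gamma}{2}}$ and setting $r=e^{-t}$ turns it into
\begin{equation*}
P_\gamma^{(0)}\overline w = A r^{\frac{n+2\gamma}{2}}\left[(u_*)^p - r^{-p\frac{2\gamma}{p-1}}\right].
\end{equation*}
Since $u_* = r^{-\frac{2\gamma}{p-1}}(1 + r^{\frac{n-2\gamma}{2}}\,\overline w\, r^{\frac{2\gamma}{p-1}}) = r^{-\frac{2\gamma}{p-1}}v_*$ with $v_* = 1 + o(1)$, I would Taylor-expand $(u_*)^p - r^{-p\frac{2\gamma}{p-1}} = r^{-p\frac{2\gamma}{p-1}}(v_*^p - 1) = r^{-p\frac{2\gamma}{p-1}}(p\,\overline v + \text{h.o.t.})$ where $\overline v = v_* - 1$. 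The key algebraic check is that the prefactor $r^{\frac{n+2\gamma}{2}} r^{-p\frac{2\gamma}{p-1}}$ combines with the powers of $r$ relating $\overline w$ and $\overline v$ so that the equation becomes $P_\gamma^{(0)}\overline w - \kappa \overline w = h$ with $\kappa = pA$ and $h$ built from the higher-order remainder; that is, the equation for $\overline w$ is exactly of the Hardy-potential form $\mathcal L_0 \overline w = (\mathcal V_* - \kappa)\overline w$ plus controllable nonlinear corrections, fitting the framework of Proposition \ref{prop:behavior-indicial} with potential $\mathcal V_*$ as in \eqref{potential}.

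Next I would establish the decay hypothesis $\overline w = O(e^{-\alpha_0|t|})$ for some $\alpha_0 > -\sigma_0$. As $t\to+\infty$ (i.e. $r\to 0$), Theorem \ref{existence} gives $u_* = (1+o(1))r^{-\frac{2\gamma}{p-1}}$, so $\overline u = o(1) r^{-\frac{2\gamma}{p-1}}$ and hence $\overline w = r^{\frac{n-2\gamma}{2}}\overline u = o(1) r^{\frac{n-2\gamma}{2} - \frac{2\gamma}{p-1}} = o(1) e^{-Q_0 t}$, which since $Q_0 = -\frac{n-2\gamma}{2} + \frac{2\gamma}{p-1} > 0$ means $\overline w \to 0$ (at least slowly) as $t\to+\infty$; in fact $\sigma_0 = Q_0$ here (the first indicial root of $P_\gamma^{(0)} - \kappa$ in the subcritical regime is $-\frac{n-2\gamma}{2}+\frac{2\gamma}{p-1}$), so this already gives $\overline w = o(e^{-\sigma_0 t})$ modulo care, and one needs the asymptotics of Theorem \ref{existence} only to get \emph{some} positive rate. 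As $t\to-\infty$ ($r\to\infty$), Theorem \ref{existence} gives $u_* = (1+o(1))r^{-(n-2\gamma)}$, so $\overline u = u_* - r^{-\frac{2\gamma}{p-1}}$ is dominated by whichever exponent is larger; since $p < \frac{n+2\gamma}{n-2\gamma}$ forces $\frac{2\gamma}{p-1} < n-2\gamma$, the term $-r^{-\frac{2\gamma}{p-1}}$ dominates, giving $\overline w = r^{\frac{n-2\gamma}{2}}\overline u \sim -r^{\frac{n-2\gamma}{2} - \frac{2\gamma}{p-1}} = -e^{Q_0 t} \to 0$ as $t\to-\infty$. Thus $\overline w$ decays at both ends with a positive rate, verifying the hypothesis of Proposition \ref{prop:behavior-indicial}.

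With the hypotheses in hand, Proposition \ref{prop:behavior-indicial} produces a non-negative integer $j$ with $\overline w(t) = (a_j + o(1))e^{-\sigma_j t}$ as $t\to+\infty$ (using $\tau_j = 0$ in the stable case, as assumed) for some $a_j \neq 0$, unless $\overline w \equiv 0$. The remaining task — and this is the main obstacle — is to show that $j = 0$, i.e. that the slowest-decay indicial behavior actually occurs and is not accidentally suppressed, and that $\overline w \not\equiv 0$. Non-vanishing is immediate: $\overline w \equiv 0$ would mean $u_* \equiv r^{-\frac{2\gamma}{p-1}}$, contradicting the decay $u_* \sim r^{-(n-2\gamma)}$ at infinity (the two power functions are distinct since $\frac{2\gamma}{p-1}\neq n-2\gamma$). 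To pin down $j = 0$, I would examine the constant $C_0$ from \eqref{constantD0} in Theorem \ref{thm:Hardy-potential}, namely $C_0 = \int_{\mathbb R} e^{\sigma_0 t'} h(t')\,dt'$ with $h = (\mathcal V_* - \kappa)\overline w + (\text{nonlinear remainder})$; the point is that $\mathcal V_* - \kappa \leq 0$ near $+\infty$ has a fixed sign (coming from the sign of the leading correction in the expansion of $v_*^{p-1}$, which in turn is tied to $\overline v$ having a definite sign since $u_*$ lies below the singular solution $u_0$, being a fast-decaying solution), and similarly the nonlinear remainder term has controllable sign, making the integrand $e^{\sigma_0 t'}h(t')$ of one sign — hence $C_0 \neq 0$. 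Establishing this sign information rigorously (i.e. $\overline u < 0$, equivalently $u_* < u_0 = r^{-\frac{2\gamma}{p-1}}$ everywhere) is the delicate point; it should follow from a maximum-principle / comparison argument for $L_\kappa$ using that $u_*$ is the fast-decaying solution and $u_0$ the singular one, or alternatively from the monotonicity of the Hamiltonian $H_\gamma$ along the $t$-flow established earlier. Once $C_0\neq0$ is secured, Theorem \ref{thm:Hardy-potential}(b) with $J=0$ gives directly $\overline w(t) = c_0 C_0 e^{-\sigma_0 t} + O(e^{-\min\{\sigma_0+q,\sigma_1\}t})$, so $a := c_0 C_0 > 0$ after checking the sign of $c_0$ (which is positive, being essentially the residue of $1/(\Theta_0(z)-\kappa)$ at the first pole), completing the proof.
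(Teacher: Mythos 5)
The main gap is in the crucial step, showing $C_0\neq0$. Your plan is to establish that $\overline u<0$ everywhere (equivalently $u_*<u_0=r^{-\frac{2\gamma}{p-1}}$) by a maximum-principle or Hamiltonian-monotonicity argument, and then infer a sign on the right-hand side. You explicitly flag this as the delicate point, and indeed it is left unproved; moreover it is far from automatic for a non-local operator with a Hardy potential (the comparison principle for $L_\kappa$ with $\kappa>0$ is not elementary, and the Hamiltonian monotonicity does not obviously yield pointwise ordering). The paper avoids this entirely with a convexity trick that you did not use: since both $u_*$ and $u_0$ solve $(-\Delta)^\gamma u=Au^p$, and since $\frac{\kappa}{r^{2\gamma}}\overline u=pAu_0^{p-1}\overline u$ (because $u_0^{p-1}=r^{-2\gamma}$), one has
\[
L_\kappa\overline u=A\bigl[u_*^p-u_0^p-pu_0^{p-1}(u_*-u_0)\bigr]
=Ar^{-\frac{2\gamma p}{p-1}}\bigl[(1+\overline v)^p-1-p\overline v\bigr],
\]
which is $\ge 0$ for \emph{any} sign of $\overline v$, by convexity of $s\mapsto(1+s)^p$ for $p>1$, with equality iff $\overline v=0$. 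Thus $h=r^{\frac{n+2\gamma}{2}}\,L_\kappa\overline u>0$ (since $\overline v\not\equiv 0$) and $C_0=\int e^{\sigma_0 t'}h(t')\,dt'>0$ immediately, with no sign information on $\overline u$ needed. In short, the right decomposition is to linearize around the singular solution $u_0$, so the whole linear term is absorbed into the Hardy potential and the remainder is a definite-sign Taylor error; your decomposition $h=(\mathcal V_*-\kappa)\overline w+(\text{nonlinear remainder})$, based on linearizing around $u_*$, produces competing terms of the same order and no free sign.

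Two smaller inaccuracies worth noting. First, the identification $\sigma_0=Q_0$ is not correct: the indicial root $\sigma_0$ solves $\Theta_0(i\sigma_0)=\kappa=pA$, whereas $\Theta_0(iQ_0)=A$; since $p>1$ these are different values and in general $\sigma_0\neq Q_0$. Second, in your decay estimate at $t\to+\infty$ there is a sign slip: $r^{\frac{n-2\gamma}{2}-\frac{2\gamma}{p-1}}=r^{-Q_0}=e^{Q_0 t}$, not $e^{-Q_0 t}$, so the crude bound $\overline u=o(1)r^{-\frac{2\gamma}{p-1}}$ from Theorem \ref{existence} only gives $\overline w=o(e^{Q_0 t})$, which is a growth bound, not decay. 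To invoke Proposition \ref{prop:behavior-indicial} one needs the finer decay of $\overline w$ coming from the construction of $u_*$ (the paper also asserts rather than derives this decay), so this part should at least be flagged rather than presented as a consequence of the stated $(1+o(1))$ asymptotics alone.
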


\begin{proof}
Equation \eqref{equation30} implies, for  $\overline v=r^{\frac{2\gamma}{p-1}}\overline u$, that for $\kappa=pA$,
\begin{equation*}
(-\Delta)^\gamma \overline u-\frac{\kappa}{r^{2\gamma}}\overline u=Ar^{-p\frac{2\gamma}{p-1}}\left[(1+\overline v)^p-1-p\overline v\right]=:\overline h > 0,
\end{equation*}
since $p>1$, unless $\overline v\equiv 0$.
Now set $\overline w=r^{\frac{n-2\gamma}{2}} \overline u$, then the above equation is equivalent to
\begin{equation}\label{RHS}
P_\gamma^{(0)} \overline w-\kappa\overline w=r^{\frac{n+2\gamma}{2}} \overline h=:h>0.
\end{equation}
Now,  since $\overline{w}(t)$ decays both as $t\to\pm\infty$, we can use Proposition \ref{prop:behavior-indicial}, taking into account that $a_0$  (or equivalently, $C_0$ from \eqref{constantD0}) cannot vanish due to  \eqref{RHS}.
\end{proof}

Now we go back to problem \eqref{problem20}. First,
since \eqref{problem} is invariant under rescaling, it is well known that
\begin{equation*}
\phi_*:=r\partial_r u_*+\tfrac{2\gamma}{p-1}u_*
\end{equation*}
belongs to the kernel of $L_*$. We will show that this is the only possibility (\emph{non-degeneracy} of $u_*$).

Set $w_*$ defined as $w_*=r^{\frac{n-2\gamma}{2}}\phi_*$. It is a radially symmetric, smooth  solution to $\mathcal L_* w=0$ that decays both as $t\to \pm\infty$. In particular, from Proposition \ref{prop:decay-barw} one has
\begin{equation*}
w_*(t)=(a+o(1)) e^{-\sigma_0 t}\quad\text{as}\quad t\to +\infty
\end{equation*}
for some $a\neq 0$.

\subsection{Wro\'{n}skians in the non-local setting}

We will show that $u_*$ is non-degenerate, this is, the kernel of $L_*$ consists on multiples of $\phi_*$. For simplicity, we will concentrate just on radial solutions (which correspond to the $m=0$ projection above), but the same argument would work for any $m$. Also, let us restrict to the stable case in order not to have a cumbersome notation. The main idea is to write down a quantity that would play the role of Wro\'{n}skian for a standard ODE. We provide two approaches: first, using the extension (Lemma \ref{lemma:Wronskian1}) and then working directly in $\mathbb R^n$ (Lemma \ref{lemma:Wronskian2}).

\begin{lemma}\label{lemma:Wronskian1}
Let $w_i=w_i(t)$, $i=1,2$, be two radially symmetric solutions to $\mathcal L_*w=0$ and set $W_i$, $i=1,2$, the corresponding extensions from Proposition \ref{prop:extension}.
Then, the Hamiltonian quantity
\begin{equation*}\label{Hamiltonian-tilde}
\begin{split}
\tilde H_\gamma(t):=\int_{0}^{\rho_0}  {\rho}^{1-2\gamma} e_2(\rho)[W_1\partial_t W_2-W_2\partial_t W_1]\,d\rho\\
\end{split}
\end{equation*}
 is constant along trajectories.
\end{lemma}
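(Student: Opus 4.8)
The plan is to mimic, at the level of the extension, the classical proof that the Wronskian of two solutions of a second-order linear ODE is constant. The key is that both $W_1$ and $W_2$ solve the \emph{same} degenerate elliptic equation \eqref{equation-extension} with $Q_0=0$, namely
\[
\partial_\rho\big(e_1(\rho)\rho^{1-2\gamma}\partial_\rho W_i\big)+e_2(\rho)\rho^{1-2\gamma}\partial_{tt}W_i=0
\quad\text{in }(0,\rho_0)\times\mathbb R,
\]
together with the Neumann condition that encodes $\mathcal L_* w_i=0$:
\[
-\tilde d_\gamma\lim_{\rho\to0}\rho^{1-2\gamma}\partial_\rho W_i+\Lambda_{n,\gamma}w_i=\mathcal V_*(t)\,w_i,
\]
i.e. $-\tilde d_\gamma\lim_{\rho\to0}\rho^{1-2\gamma}\partial_\rho W_i=(\mathcal V_*(t)-\Lambda_{n,\gamma})w_i$ (the constant $A_{n,p,\gamma}$ bookkeeping from Proposition \ref{prop:extension} is absorbed into $\mathcal V_*$ in the notation of Section \ref{section:non-degeneracy}).

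First I would compute $\frac{d}{dt}\tilde H_\gamma(t)$ by differentiating under the integral sign:
\[
\frac{d}{dt}\tilde H_\gamma(t)=\int_0^{\rho_0}\rho^{1-2\gamma}e_2(\rho)\big[W_1\,\partial_{tt}W_2-W_2\,\partial_{tt}W_1\big]\,d\rho.
\]
Then I would use the interior equation to replace $e_2(\rho)\rho^{1-2\gamma}\partial_{tt}W_i$ by $-\partial_\rho\big(e_1(\rho)\rho^{1-2\gamma}\partial_\rho W_i\big)$, obtaining
\[
\frac{d}{dt}\tilde H_\gamma(t)=-\int_0^{\rho_0}\Big[W_1\,\partial_\rho\big(e_1\rho^{1-2\gamma}\partial_\rho W_2\big)-W_2\,\partial_\rho\big(e_1\rho^{1-2\gamma}\partial_\rho W_1\big)\Big]\,d\rho.
\]
Integrating by parts in $\rho$, the interior terms $\pm e_1\rho^{1-2\gamma}\partial_\rho W_1\partial_\rho W_2$ cancel, leaving only boundary contributions at $\rho=\rho_0$ and $\rho=0$:
\[
\frac{d}{dt}\tilde H_\gamma(t)=-\Big[e_1\rho^{1-2\gamma}\big(W_1\partial_\rho W_2-W_2\partial_\rho W_1\big)\Big]_{\rho=0}^{\rho=\rho_0}.
\]

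It remains to show both boundary terms vanish. At $\rho=0$: since $e_1(\rho)\to1$ and $W_i|_{\rho=0}=w_i$, the term is $-\lim_{\rho\to0}\rho^{1-2\gamma}\big(w_1\partial_\rho W_2-w_2\partial_\rho W_1\big)=-\tilde d_\gamma^{-1}\big(w_1(\mathcal V_*-\Lambda_{n,\gamma})w_2-w_2(\mathcal V_*-\Lambda_{n,\gamma})w_1\big)=0$ by the Neumann conditions, the point being that the \emph{same} potential $\mathcal V_*(t)$ multiplies both, so the bracket is antisymmetric and cancels. At $\rho=\rho_0$: as noted in Section \ref{section:conformal}, the endpoint $\rho=\rho_0$ (i.e. $R=2$) behaves like the origin in polar coordinates, so $\rho^{1-2\gamma}\partial_\rho W_i\to 0$ there because the solutions $\phi_i$ (hence $w_i$, $W_i$) are regular at the corresponding interior point of $\mathbb R^n$; more precisely one invokes the decay/regularity of $W_i$ built into Proposition \ref{prop:extension} to see the flux vanishes. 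Hence $\frac{d}{dt}\tilde H_\gamma\equiv0$.

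The main obstacle I anticipate is the rigorous justification of the two boundary evaluations: differentiation under the integral sign and the integration by parts require integrability of $\rho^{1-2\gamma}$-weighted derivatives up to $\rho=0$, which is exactly the kind of weighted estimate that the extension construction in \cite{Ao-Chan-DelaTorre-Fontelos-Gonzalez-Wei} provides but which must be cited carefully; and the vanishing of the flux at $\rho_0$ relies on the (only implicitly stated) fact that the apparent singularity there is removable for regular solutions. Everything else is the routine "Wronskian is constant" computation, with the crucial structural input being that $e_2(\rho)$ appears as the weight in $\tilde H_\gamma$ precisely so that it matches the coefficient of $\partial_{tt}W$ in the extension equation, making the interior terms cancel.
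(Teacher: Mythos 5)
Your proof takes essentially the same route as the paper's: differentiate under the integral, substitute the extension equation for $e_2\rho^{1-2\gamma}\partial_{tt}W_i$, integrate by parts in $\rho$, and observe that the surviving flux at $\rho=0$ becomes $\tfrac{-1}{\tilde d_\gamma}\big(w_1 P_\gamma^{(0)}w_2-w_2 P_\gamma^{(0)}w_1\big)$, which is antisymmetric and vanishes because both $w_i$ satisfy the same equation $P_\gamma^{(0)}w_i=\mathcal V_*(t)w_i$. You are somewhat more explicit than the paper about discarding the boundary flux at $\rho=\rho_0$ and about the weighted integrability needed to justify the manipulations, which is a sound instinct but does not change the substance of the argument.
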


\begin{proof}
The proof is the same as the one from Theorem \ref{thctthamiltonian}.
 however, we present it for completeness. By straightforward calculation, using \eqref{equation-extension} for the second equality and integrating by parts in the third equality,
\begin{equation*}
\begin{split}
\partial_t&\int_{0}^{\rho_0}  {\rho}^{1-2\gamma} e_2(\rho) [W_1\partial_t W_2-W_2\partial_t W_1]\,d\rho\\
&=\int_{0}^{\rho_0}  {\rho}^{1-2\gamma} e_2(\rho) [W_1\partial_{tt} W_2-W_2\partial_{tt} W_1]\,d\rho\\
&= \int_{0}^{\rho_0}  \big(-W_1\partial_\rho(e_1(\rho)\rho^{1-2\gamma} \partial_{\rho} W_2)-W_2\partial_\rho(e_1(\rho)\rho^{1-2\gamma} \partial_{\rho} W_1\big)\,d\rho\\
&= \lim_{\rho\to 0}e_1(\rho)\big(w_1\rho^{1-2\gamma} \partial_{\rho} w_2-w_2\rho^{1-2\gamma} \partial_{\rho} w_1\big)\\
&=\frac{-1}{d_\gamma^*} \big ( w_1P_\gamma^{(0)} w_2-w_2P_\gamma^{(0)} w_1\big)=0
\end{split}
\end{equation*}
since both $w_1$ and $w_2$ satisfy the same equation $\mathcal L_* w=0$.
\end{proof}

Now define the Wro\'{n}skian of two solutions for the ODE \eqref{ODE-j}
\begin{equation*}
\mathcal W_j[w,\tilde w]=w_j\tilde w_j'-w_j'\tilde w_j,
\end{equation*}
and its weighted sum in $j=0,1,\ldots$
\begin{equation}\label{Wronskian}
\mathcal W[w,\tilde w]=\sum_{j=0}^{\infty}
	\dfrac{c_j}{\sigma_j}\mathcal W_j[w,\tilde w],
\end{equation}
for the constants given in Theorem \ref{thm:Hardy-potential}.

\begin{lemma}\label{lemma:Wronskian2}
Let $w,\tilde w$ be two radially symmetric solutions of $
\mathcal L_*w=0$. Then the Wro\'{n}skian quantity from \eqref{Wronskian} satisfies
\[
\mathcal{W}[w,\tilde w]'=0.
\]
\end{lemma}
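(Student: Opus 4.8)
The plan is to mimic the classical fact that for a second-order linear ODE the Wronskian of two solutions has derivative controlled by the coefficient of the first-derivative term; here the ODE \eqref{ODE-j} has \emph{no} first-derivative term, so each $\mathcal W_j$ should be constant, and hence so should the weighted sum. Concretely, writing $\phi=r^{-\frac{n-2\gamma}{2}}w$ and similarly for $\tilde w$, and setting $h:=\mathcal V_*(t)w$, $\tilde h:=\mathcal V_*(t)\tilde w$, the hypothesis $\mathcal L_*w=0$ means $P_\gamma^{(0)}w-\kappa w=(\mathcal V_*-\kappa)w$; but to put things in the exact form of Corollary \ref{cor:ODEsystem} it is cleaner to regard $w$ as the solution of $\mathcal L_0 w=h$ with $h=\mathcal V_* w$ (the full potential absorbed into the right-hand side), so that by Corollary \ref{cor:ODEsystem} (recall we assume all $\tau_j=0$) we have the decomposition $w=\sum_{j\ge0}c_j w_j$ with $w_j=e^{-\sigma_j|\cdot|}\ast h$ solving $w_j''-\sigma_j^2 w_j=-2\sigma_j h$, and likewise $\tilde w=\sum_j c_j\tilde w_j$ with $\tilde w_j''-\sigma_j^2\tilde w_j=-2\sigma_j\tilde h$, $\tilde h=\mathcal V_*\tilde w$.

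First I would compute, for each fixed $j$,
\[
\mathcal W_j[w,\tilde w]'=(w_j\tilde w_j'-w_j'\tilde w_j)'=w_j\tilde w_j''-w_j''\tilde w_j
=w_j(\sigma_j^2\tilde w_j-2\sigma_j\tilde h)-(\sigma_j^2 w_j-2\sigma_j h)\tilde w_j
=2\sigma_j(h\tilde w_j-w_j\tilde h).
\]
Dividing by $\sigma_j$ and summing against $c_j$ gives
\[
\mathcal W[w,\tilde w]'=\sum_{j=0}^\infty \frac{c_j}{\sigma_j}\mathcal W_j[w,\tilde w]'
=2\sum_{j=0}^\infty c_j\big(h\,\tilde w_j-w_j\,\tilde h\big)
=2\Big(h\sum_j c_j\tilde w_j-\tilde h\sum_j c_j w_j\Big)
=2\big(h\tilde w-\tilde h w\big).
\]
Now substitute $h=\mathcal V_*w$ and $\tilde h=\mathcal V_*\tilde w$: the bracket becomes $\mathcal V_* w\tilde w-\mathcal V_*\tilde w w=0$. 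Hence $\mathcal W[w,\tilde w]'\equiv0$, which is the claim.

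The step I expect to be delicate is the interchange of the differentiation/summation in $\sum_j\frac{c_j}{\sigma_j}\mathcal W_j'$ and, more importantly, the rearrangement $\sum_j c_j(h\tilde w_j-w_j\tilde h)=h\sum_jc_j\tilde w_j-\tilde h\sum_jc_jw_j$; both require that the series $\sum_jc_jw_j$, $\sum_jc_jw_j'$ (and their tilde counterparts) converge locally uniformly, together with enough decay of $h,\tilde h$ — this is exactly the content of the convergence statements behind Theorem \ref{thm:Hardy-potential} and Corollary \ref{cor:ODEsystem}, using that $\sigma_j$ is strictly increasing with no accumulation point (Theorem \ref{thm:poles}) and that $\mathcal V_*$ satisfies the decay \eqref{potential} so that $h=\mathcal V_*w$, $\tilde h=\mathcal V_*\tilde w$ inherit admissible decay from the Frobenius expansion of $w,\tilde w$ in Proposition \ref{prop:behavior-indicial}. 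A parallel, extension-based justification is already available in Lemma \ref{lemma:Wronskian1}, and one can remark that $\mathcal W[w,\tilde w]$ coincides (up to the normalizing constants) with $\tilde H_\gamma$ there; if one prefers, the whole statement can alternatively be deduced from Lemma \ref{lemma:Wronskian1} by expanding $W_i$ in the $\rho$-variable, but the direct ODE computation above is the most transparent and I would present that as the main argument, relegating the convergence bookkeeping to a reference to the cited results.
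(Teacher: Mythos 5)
Your proof is correct and follows exactly the same route as the paper's: decompose $w,\tilde w$ via Corollary \ref{cor:ODEsystem} with right-hand side $h=\mathcal V_*w$, compute $\mathcal W_j'=w_j\tilde w_j''-w_j''\tilde w_j$ from the ODE \eqref{ODE-j}, divide by $\sigma_j$, weight by $c_j$, and sum so that the series telescopes back to $w$ and $\tilde w$ and the potential cancels. The extra care you flag about interchanging sum and derivative is a reasonable caveat, but the paper's proof proceeds at the same formal level.
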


\begin{proof}
Just recall the ODE system formulation \eqref{w-sum} and \eqref{ODE-j}, consider the corresponding Wro\'{n}skian sequence and sum in $j$.
Since $w$ and $\tilde w$ both satisfy
\[
w_j''-\sigma_j^2 w_j = -2\sigma_j \mathcal{V}_*(t)w,
\]
 we see that
\[\begin{split}
\mathcal W_j[w,\tilde w]'
&=w_j\tilde w_j''-w_j''\tilde w_j
=-2\sigma_j \mathcal{V}_*(t)(w_j\tilde w-\tilde w_jw),
\end{split}\]
and the conclusion follows.
\end{proof}

\begin{prop}\label{prop:other-solutions}
Any other radially symmetric solution to $\mathcal L_* w=0$ that decays both at $\pm \infty$ must be a multiple of $w_*$.
\end{prop}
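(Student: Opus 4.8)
The plan is to use the Wro\'{n}skian quantity $\mathcal W[w,\tilde w]$ from \eqref{Wronskian} together with the asymptotic analysis of Proposition \ref{prop:behavior-indicial} applied at both ends $t\to\pm\infty$. First I would take $\tilde w = w_*$, a solution decaying at both ends, and let $w$ be any radially symmetric solution of $\mathcal L_* w = 0$ that also decays at both $\pm\infty$. By Lemma \ref{lemma:Wronskian2}, the quantity $\mathcal W[w,w_*]$ is constant in $t$; call this constant $c$. The strategy is to show $c=0$ by evaluating the limit of $\mathcal W[w,w_*](t)$ as $t\to+\infty$ (or $t\to-\infty$) using the decay of both solutions, and then to deduce from $c=0$ that $w$ and $w_*$ are proportional.

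For the first part, recall from Corollary \ref{cor:ODEsystem} (in the simplified case $\tau_j=0$) that each projected component $w_j = e^{-\sigma_j|\cdot|}\ast h$ decays like $e^{-\sigma_0|t|}$ at worst, so each Wro\'{n}skian term $\mathcal W_j[w,w_*] = w_j (w_*)_j' - w_j' (w_*)_j$ decays at least like $e^{-2\sigma_0|t|}$ as $|t|\to\infty$; summing the weighted series (whose convergence follows from the decay of the coefficients $c_j/\sigma_j$ and the exponential decay of the terms, as in Theorem \ref{thm:Hardy-potential}) gives $\mathcal W[w,w_*](t)\to 0$. Hence $c=0$, i.e. $\mathcal W[w,w_*]\equiv 0$.

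The second part is to conclude that $\mathcal W[w,w_*]\equiv 0$ forces $w$ to be a multiple of $w_*$. Here I would argue componentwise: the vanishing of the \emph{weighted sum} $\sum_j (c_j/\sigma_j)\mathcal W_j$ is weaker than the vanishing of each $\mathcal W_j$, so one cannot immediately conclude proportionality of each $w_j$. Instead, the right approach is to use the Frobenius expansion of Proposition \ref{prop:behavior-indicial}: both $w$ and $w_*$ have leading behavior $e^{-\sigma_0 t}$ as $t\to+\infty$ (with $w_* = (a+o(1))e^{-\sigma_0 t}$, $a\neq 0$, from Proposition \ref{prop:decay-barw}, and $w = (b+o(1))e^{-\sigma_0 t}$ for some $b$; if $b=0$ then $w$ decays faster and one examines the next indicial root). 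Form $\psi := w - (b/a) w_*$, which solves $\mathcal L_* \psi = 0$, decays at $-\infty$, and has strictly faster decay than $e^{-\sigma_0 t}$ at $+\infty$. One then shows $\psi\equiv 0$: by Proposition \ref{prop:behavior-indicial} either $\psi$ has a nonzero leading term $e^{-\sigma_j t}$ at some indicial root $\sigma_j>\sigma_0$, or $\psi\equiv 0$ by unique continuation. To rule out the former, I would again invoke the Wro\'{n}skian, now noting that $\mathcal W[\psi, w_*] = \mathcal W[w,w_*] - (b/a)\mathcal W[w_*,w_*] = 0$, and analyze this identity using the now-known leading asymptotics of $\psi$ at $+\infty$ and of $w_*$ at $-\infty$ (via the indicial roots $\varsigma_j = \frac{n-2\gamma}{2}+j$ from Lemma \ref{lem:num}) to force the leading coefficient of $\psi$ to vanish, contradicting minimality of $j$; iterating (or a cleaner global argument) then yields $\psi\equiv 0$, hence $w = (b/a)w_*$.

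The main obstacle I expect is precisely this second step: passing from the vanishing of the \emph{single scalar} quantity $\mathcal W[w,\tilde w]$ to the \emph{proportionality} of the two solutions. For a genuine second-order ODE, vanishing of the ordinary Wro\'{n}skian of two solutions immediately gives linear dependence, but here the coupling of the infinitely many components through the nonlocal term $\mathcal V_*(t)w$ means $\mathcal W$ is a weighted sum and its vanishing does not decouple. The resolution must exploit the rigidity of the Frobenius expansion (only countably many possible decay rates, each with finitely many free coefficients) together with unique continuation for $L_*$ — available in the stable case from \cite{Fall-Felli} and in the unstable case from \cite{Ruland}. A subsidiary technical point is justifying term-by-term differentiation and the limit interchange in the infinite series defining $\mathcal W$, which should follow from the uniform exponential bounds on the $w_j$ and the summability of $c_j/\sigma_j$.
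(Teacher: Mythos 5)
Your proposal follows essentially the same route as the paper: use Lemma \ref{lemma:Wronskian2} to get $\mathcal W[w,w_*]\equiv 0$, then combine the Frobenius expansion of Proposition \ref{prop:behavior-indicial} with the vanishing of the Wro\'{n}skian to match the asymptotics of $w$ and $w_*$ at $+\infty$ order by order, and finish with unique continuation. The only difference is presentational: the paper compares the asymptotic expansions of $w$ and $w_*$ directly (first matching the leading indicial roots, then rescaling, then matching higher-order coefficients inductively), whereas you form the difference $\psi=w-(b/a)w_*$ and show its leading Frobenius coefficient at each step must vanish. These are the same inductive argument in two different packagings, and both land on the same unique continuation step. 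One small clean-up: your parenthetical invocation of the asymptotics of $w_*$ at $-\infty$ via the roots $\varsigma_j$ is a red herring --- the whole matching is carried out at $+\infty$, exactly as the paper does, since that is where $w_*$ has the sharp leading behavior $e^{-\sigma_0 t}$ from Proposition \ref{prop:decay-barw}; the $-\infty$ roots of $P_\gamma^{(0)}$ only enter the two-sided decay hypothesis and play no role in the coefficient-matching. You also correctly flag the genuine subtlety that $\mathcal W$ is a weighted sum over $j$ rather than the naive Wro\'{n}skian $w\tilde w'-w'\tilde w$, so vanishing does not give componentwise proportionality; the paper resolves this exactly as you do, by passing to the scalar Frobenius expansion rather than trying to decouple the system.
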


\begin{proof}
The proof is standard as is based on the previous Hamiltonian identities, applied to $w$ and $w_*$. Let us write the proof using Lemma \ref{lemma:Wronskian2}, for completeness. It holds
\[
\mathcal W[w,w_*](t)\equiv \lim_{t\to+\infty}\mathcal W[w,w_*](t)=0.
\]
We claim that $w$ and $w_*$ must have the same asymptotic expansion as $t\to+\infty$. By Proposition \ref{prop:behavior-indicial}, the leading order terms of $w$ and $w_*$ as $t\to+\infty$ must be an exponential with some indicial root, i.e.
\[
w(t)=a_{j_0}(1+o(1)) e^{-\sigma_{j_0} t},
	\quad
w_*(t)=a_{j_0^*}(1+o(1)) e^{-\sigma_{j_0^*} t},
\]
for some non-negative integers $j_0,j_0^*$ and non-zero $a_{j_0}, a_{j_0^*}$. Note that these expressions ``can be differentiated'' in the sense that
\[
w'(t)=a_{j_0}(-\sigma_{j_0}+o(1)) e^{-\sigma_{j_0} t},
	\quad
w_*'(t)=a_{j_0^*}(-\sigma_{j_0^*}+o(1)) e^{-\sigma_{j_0^*} t}.
\]
Since
\[\begin{split}
\mathcal W[w,w_*](t)
&=
	a_{j_0} a_{j_0^*}
	(\sigma_{j_0}-\sigma_{j_0^*}+o(1))
	e^{-(\sigma_{j_0}+\sigma_{j_0^*})t}
\quad \text{ as } t \to +\infty,
\end{split}\]
we obtain that $\sigma_{j_0}=\sigma_{j_0^*}$. From the bilinearity of $\mathcal W[w,w_*]$, we can assume by rescaling that $a_{j_0}=a_{j_0^*}=1$.

We now look at the next order. We suppose
\[
w(t)
=
	e^{-\sigma_{j_0} t}
	+a_\alpha(1+o(1))e^{-\alpha t},
		\quad
w_*(t)
=
	e^{-\sigma_{j_0^*} t}
	+a_{\alpha^*}(1+o(1))e^{-\alpha^* t},
\]
for some complex numbers $\alpha,\alpha^*$ with $\mathrm{Re\,} \alpha,\mathrm{Re\,}\alpha^*\geq j_0$ and non-zero real numbers $a_{\alpha}, a_{\alpha^*}$.
A direct computation of the Wro\'{n}skian yields
\[\begin{split}
\mathcal W[w,w_*](t)
&=
	a_{\alpha}(\alpha-\sigma_{j_0})
	(1+o(1))
	e^{-(\sigma_{j_0}+\alpha)t}\\
&\quad\;
	-a_{\alpha^*}(\alpha^*-\sigma_{j_0})
	(1+o(1))
	e^{-(\sigma_{j_0}+\alpha^*)t},
\quad \text{ as } t\to+\infty.
\end{split}\]
In order that $\mathcal W[w,w_*]\equiv0$, the next order exponents $\alpha$ and $\alpha^*$ must be matched and the same expression also tells us that $a_{\alpha}=a_{\alpha^*}$.

Inductively we obtain that, once $w$ and $w_*$ are rescaled to match the leading order, they have the same asymptotic expansion up to any order, as $t\to+\infty$. Unique continuation, as applied in the above results, yields the result.
\end{proof}

\subsection{The unstable case}

Let us explain the modifications that are needed in the above for the unstable case. We  consider only radially symmetric solutions (the $m=0$ mode).

First we recall some facts on the indicial roots as $t\to +\infty$. It holds that $\sigma_0=0$, $\tau_0\neq 0$. We also know that $\sigma_j>0$ for all 
$j\geq 1$,
and $\tau_j=0$ for all $j\geq J$, for some $J$ large enough. A more precise estimate for $J$ would be desirable, but it would be too technical.

\begin{prop}\label{prop:unstable+}
Let 
$\kappa$ be as in Proposition \ref{prop:unstable} and $\overline{w}$ be as defined by \eqref{eq:w-bar}. As $t\to +\infty$, either
\begin{itemize}
\item[\emph{i}.] $\overline w=a_0^1 \cos(\tau_0 t)+a_0^2 \sin(\tau_0 t)+o(1)$, where $a_0^1$, $a_0^2$ do not vanish simultaneously; or
\item[\emph{ii}.] for some
    positive integer $j_0\leq J-1$
$$\overline w =\left[ a_{j_0}^1 \cos(\tau_{j_0} t)+a_{j_0}^2 \sin(\tau_{j_0} t)+o(1)\right]e^{-\sigma_{j_0}t},$$
where $a_{j_0}^1$, $a_{j_0}^2$ do not vanish at the same time; or
\item[\emph{iii}.] There exists $a_J>0$ such that
$$\overline w =(a_J+o(1)) e^{-\sigma_J t}.$$
\end{itemize}
\end{prop}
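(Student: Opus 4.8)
The plan is to mimic the proof of Proposition \ref{prop:decay-barw}, but now with $\kappa$ in the unstable regime $\Lambda_{n,\gamma}<\kappa<\Lambda'_{n,\gamma}$, so that Proposition \ref{prop:unstable} replaces Theorem \ref{thm:Hardy-potential}. Recall that $\overline w=r^{\frac{n-2\gamma}{2}}\overline u$ with $\overline u=u_*-r^{-\frac{2\gamma}{p-1}}$, and that by exactly the computation in Proposition \ref{prop:decay-barw} one has, writing $\overline v=r^{\frac{2\gamma}{p-1}}\overline u$,
\[
P_\gamma^{(0)}\overline w-\kappa\overline w
= r^{\frac{n+2\gamma}{2}}\cdot A r^{-p\frac{2\gamma}{p-1}}\left[(1+\overline v)^p-1-p\overline v\right]
=:h>0,
\]
the positivity being a consequence of strict convexity of $s\mapsto(1+s)^p$ for $p>1$ (unless $\overline v\equiv 0$, which is excluded since $u_*$ is not the pure power). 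Moreover $\overline w$ decays at both $\pm\infty$ (from Theorem \ref{existence} and the known asymptotics), so we are in the situation of the analogue of Proposition \ref{prop:behavior-indicial} for the unstable case, referenced in Proposition \ref{prop:unstable}\,\emph{ii}.

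First I would record the indicial roots as $t\to+\infty$ for $P_\gamma^{(0)}w-\kappa w=0$ in this regime: as noted just before the statement, $\sigma_0=0$ with $\tau_0\neq 0$, then $\sigma_j>0$ for $j\geq 1$, and $\tau_j=0$ for $j\geq J$. Then I would run the Frobenius iteration of Proposition \ref{prop:behavior-indicial}: starting from $\overline w=O(e^{-\alpha_0|t|})$, convolve with the Green's function $\mathcal G_0$ of Proposition \ref{prop:unstable}\,\emph{i.}, use Lemma \ref{lem:convolution} together with $\mathcal V_*-\kappa=O(e^{-qt})$ to gain decay, and iterate to peel off, in order, the indicial terms $e^{-\sigma_{j}t}$ (oscillating with frequency $\tau_j$ when $\tau_j\neq 0$). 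At each stage the coefficient that is split off is an integral of the form $\int_{\mathbb R}e^{\sigma_j t'}\cos(\tau_j t')h\,dt'$ (and the $\sin$ analogue), exactly as in Theorem \ref{thm:Hardy-potential}\,\emph{b.} The iteration must terminate at $j=J$: beyond that point all $\tau_j=0$ and the exponents $\sigma_j$ are genuinely increasing without accumulation, and if \emph{every} coefficient up to and including $j=J$ vanished then $\overline w$ would decay faster than every indicial rate, forcing $\overline w\equiv 0$ by unique continuation (Fall--Felli in the stable case, R\"uland via Carleman estimates in the unstable case, as cited in Proposition \ref{prop:behavior-indicial}), contradicting $\overline v\not\equiv 0$. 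This yields the trichotomy: the leading surviving term is either the $j=0$ oscillation (case \emph{i}, with $\sigma_0=0$), an intermediate oscillating-decaying term for some $1\leq j_0\leq J-1$ (case \emph{ii}), or the pure decay $e^{-\sigma_J t}$ (case \emph{iii}).

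Two points deserve care. The first is the sign/nonvanishing claim $a_J>0$ in case \emph{iii}: since $h>0$ strictly and $\sigma_J$ is a \emph{real} simple indicial root with $\tau_J=0$, the relevant coefficient is $\int_{\mathbb R}e^{\sigma_J t'}h(t')\,dt'>0$ (the same mechanism that forces $a_0\neq 0$ in Proposition \ref{prop:decay-barw}), so once this is the first non-vanishing coefficient it is automatically positive; by contrast in cases \emph{i} and \emph{ii} the coefficients are $\int e^{\sigma_j t'}\cos(\tau_j t')h$ and $\int e^{\sigma_j t'}\sin(\tau_j t')h$, which need not be simultaneously zero but carry no sign. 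The second, and the step I expect to be the main obstacle, is bookkeeping the exponential weights through the iteration when $\sigma_0=0$: with a genuinely non-decaying mode present, one must check that applying Lemma \ref{lem:convolution} (and its footnote, for the borderline $a=a_\pm$ case) never produces an uncontrolled $t\,e^{-\sigma_j t}$ term that would spoil the clean asymptotics, and that the decay condition $\delta,\delta_0>0$ required in Proposition \ref{prop:unstable}\,\emph{i.} is genuinely met by $h$ at each stage — which it is, since $h$ inherits the decay of $\overline w$ multiplied by $e^{-qt}$ at $+\infty$ and $\overline w$ decays at $-\infty$. Modulo this careful weight-tracking, the argument is a direct transcription of the proof of Proposition \ref{prop:behavior-indicial} into the unstable setting.
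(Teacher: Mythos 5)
Your argument matches the paper's proof: derive $h=P_\gamma^{(0)}\overline w-\kappa\overline w>0$ exactly as in Proposition \ref{prop:decay-barw}, then iterate the Frobenius scheme of Proposition \ref{prop:behavior-indicial} using the unstable Green's function from Proposition \ref{prop:unstable}, with the iteration terminating at $j=J$ because the coefficient $\int_{\mathbb R} e^{\sigma_J t'}h(t')\,dt'$ is strictly positive --- this is precisely the paper's appeal to \eqref{RHS}. The detour through unique continuation that you offer as an alternative termination mechanism is unnecessary (the positivity of $h$ already forces the $j=J$ coefficient to be nonzero, so the iteration can never pass that level), but it does not weaken the proof.
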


\begin{proof}
We follow the ideas in Proposition \ref{prop:behavior-indicial}. Note that
if the integrals
\begin{equation*}
\int_{\mathbb R} \cos(\tau_0 t)h(t)\,dt,\quad \int_{\mathbb R} \sin(\tau_0 t)h(t)\,dt
\end{equation*}
do not vanish simultaneously, we have \emph{i}., while if both are zero, then we need to go to \emph{ii}. But this process must stop at $J$ since $a_J$ is not zero again by \eqref{RHS}, and we have \emph{iii.}
\end{proof}

Note that a similar result has been obtained by \cite{Harada} for $\gamma=1/2$, in the setting of supercritical and subcritical solutions with respect to the Joseph-Lundgren exponent $p_{JL}$ (corresponding to the stable and unstable cases here respectively).

\medskip

\begin{prop}\label{two-dimensional}
 Let $\kappa$ be as in Proposition \ref{prop:unstable}.
The space of radially symmetric solutions  $\phi$ to problem \eqref{equation:kernel-phi}
that have a bound of the form $|\phi(r)|\leq Cr^{-\frac{n-2\gamma}{2}}$ is at most two-dimensional.
\end{prop}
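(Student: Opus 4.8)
The plan is to convert the bound $|\phi(r)|\leq Cr^{-\frac{n-2\gamma}{2}}$ into the statement that the corresponding $w=r^{\frac{n-2\gamma}{2}}\phi$ is bounded (hence $w=O(e^{-\alpha_0|t|})$ with $\alpha_0=0>-\sigma_0$, since $\sigma_0=0$ forces us to be careful: in the unstable case $\sigma_0=0$, so $-\sigma_0=0$ and the hypothesis $\alpha_0>-\sigma_0$ of Proposition \ref{prop:behavior-indicial} is \emph{not} satisfied with equality; I would instead invoke the unstable analogue, Proposition \ref{prop:unstable} part \textit{ii.}, together with the asymptotics in Proposition \ref{prop:unstable+}). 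The key point is that a bounded solution $w$ of $\mathcal L_* w=0$ is governed, as $t\to+\infty$, by the indicial roots with nonnegative real part, and the only root with $\sigma_j=0$ is the pair $\pm i\tau_0$; every other admissible root has $\sigma_j>0$ and gives decaying contributions. So the ``borderline'' behavior of any bounded solution is of the form $a_0^1\cos(\tau_0 t)+a_0^2\sin(\tau_0 t)+o(1)$ as $t\to+\infty$.

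First I would set up the Wro\'nskian machinery. Using Corollary \ref{cor:ODEsystem-complex} (or Corollary \ref{cor:ODEsystem} when all $\tau_j=0$), write any solution of $\mathcal L_*w=0$ as the realization of the series $\sum c_j w_j$ where each $w_j$ solves a second-order constant-coefficient ODE, and form the conserved quantity $\mathcal W[w,\tilde w]$ from \eqref{Wronskian}; by Lemma \ref{lemma:Wronskian2} (whose proof goes through verbatim in the unstable case, the only change being that the $j=0$ ODE is $w_0''+\tau_0^2 w_0 = 2\tau_0\mathcal V_* w$ rather than $w_0''-\sigma_0^2w_0=\dots$) this is constant in $t$. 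The strategy is then: suppose $\phi_1,\phi_2,\phi_3$ are three linearly independent radial solutions with the stated bound; I will derive a contradiction.

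Next, the core of the argument. Each $w_i=r^{\frac{n-2\gamma}{2}}\phi_i$ is bounded, so by Proposition \ref{prop:unstable+} applied to $w_i$ (more precisely, its analogue for the homogeneous equation $\mathcal L_*w=0$; note $h\equiv 0$ here so case \textit{iii.}\ with $a_J\neq 0$ forced by \eqref{RHS} does not apply and instead any of the three alternatives can occur), the leading asymptotics of $w_i$ as $t\to+\infty$ is either oscillatory at frequency $\tau_0$ with $\sigma_0=0$, or decays like $e^{-\sigma_{j}t}$ for some $j\ge 1$. Evaluating $\mathcal W[w_i,w_{i'}]$ at $t\to+\infty$ using that the asymptotic expansions ``can be differentiated'' (as in the proof of Proposition \ref{prop:other-solutions}), I get that $\mathcal W[w_i,w_{i'}]$ equals the constant extracted from the leading-order pair, and this vanishes \emph{only if} the leading exponents of $w_i$ and $w_{i'}$ coincide. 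Since $\mathcal W[w_i,w_{i'}]$ is a genuine conserved quantity, I can instead run the argument of Proposition \ref{prop:other-solutions}: among the bounded solutions, the map $w\mapsto (\text{coefficients } a_0^1,a_0^2 \text{ of the } e^{0\cdot t}\cos\tau_0 t, \sin\tau_0 t \text{ terms})$ is linear, and its kernel consists of solutions that decay as $t\to+\infty$. By Proposition \ref{prop:other-solutions} (whose unstable analogue is asserted to hold, and which I would re-derive by the same inductive matching-of-asymptotics-plus-unique-continuation scheme), any solution decaying at $+\infty$ is determined up to scalar by its leading term, and a solution decaying at \emph{both} ends is a multiple of $w_*$; combining, the space of bounded solutions injects into $\mathbb{R}^2\oplus\mathbb{R}\langle w_*\rangle$ modulo the decaying-at-$+\infty$ ones — a more careful bookkeeping shows the total dimension is at most $2$. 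Concretely: if $w_1,w_2,w_3$ are bounded and independent, two independent linear combinations lie in the kernel of $w\mapsto(a_0^1,a_0^2)$, i.e.\ decay at $+\infty$; by the (unstable) Frobenius/Wro\'nskian uniqueness these two must be proportional, contradiction.

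\textbf{Main obstacle.} The delicate point is the matching-and-unique-continuation step in the unstable regime: I am assuming the unstable analogue of Proposition \ref{prop:other-solutions}, and the subtlety is that the leading indicial root now has zero real part, so ``$w$ decays at $+\infty$'' is a strictly finer condition than ``$w$ is bounded,'' and two bounded solutions need not have comparable leading terms unless one first quotients out the oscillatory mode. Handling the possibility that a solution oscillates at $+\infty$ but the Wro\'nskian of two such still vanishes (forcing equal $\tau_0$-amplitudes and phases, then passing to the next order) requires the full inductive expansion, and the base case relies on the unique continuation results of \cite{Fall-Felli} (stable) and \cite{Ruland} (unstable) cited in Proposition \ref{prop:behavior-indicial}. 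I expect this bookkeeping — rather than any single hard estimate — to be where the real work lies.
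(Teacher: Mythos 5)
Your overall route matches the paper's: pass to $w=r^{\frac{n-2\gamma}{2}}\phi$; use the decay at $t\to-\infty$ coming from inverting $P_\gamma^{(0)}$; then invert $P_\gamma^{(0)}-\kappa$ to classify the leading behavior at $t\to+\infty$ into oscillatory (level $j=0$, $\sigma_0=0$) or decaying (level $j\geq 1$); and finally use the Wro\'{n}skian identity together with unique continuation to compare two solutions with the same leading data. Your observations about which hypotheses of Proposition~\ref{prop:behavior-indicial} fail at the borderline $\sigma_0=0$, and that for $h\equiv 0$ the forcing $a_J\neq 0$ of \eqref{RHS} does not apply (so all alternatives of Proposition~\ref{prop:unstable+} can occur), are both correct and in the spirit of the paper.

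However, the ``concrete'' final counting has a genuine error. You write that if $w_1,w_2,w_3$ are bounded and independent, then \emph{two} independent linear combinations lie in the kernel of the linear map $T\colon w\mapsto(a_0^1,a_0^2)$. By rank-nullity, a linear map from a $3$-dimensional space into $\mathbb{R}^2$ has kernel of dimension $\geq 1$, not $\geq 2$; your argument therefore only produces \emph{one} decaying combination, which is not yet a contradiction with the (at most) one-dimensional space of solutions decaying at both ends. The missing step is the dichotomy implicit in the paper's phrasing: the Wro\'{n}skian/unique-continuation argument shows that two solutions with the \emph{same} nonzero $(a_0^1,a_0^2)$ are identical; but if $T$ is linear and $Tw=v\neq 0$, then for any $k\in\ker T$ one has $T(w+k)=v$, hence $w+k=w$ and $k=0$, so in fact $T$ is either injective (giving $\dim\leq 2$) or identically zero. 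In the latter case every bounded kernel element decays at $+\infty$ and the same dichotomy is applied at the next indicial level $j_0\geq 1$ (yielding $\dim\leq 2$ if $\tau_{j_0}\neq0$, $\dim\leq 1$ if $\tau_{j_0}=0$, or iterating further). This ``injective-or-zero'' observation, rather than the rank-nullity count you use, is what closes the argument; without it your $3$-solution contradiction does not go through.
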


\begin{proof}
If we set $w=r^{\frac{n-2\gamma}{2}}\phi$ as above, then it satisfies $w=O(1)$ as $t\to \pm\infty$ and it is a solution to $\mathcal L_0 w=0$.

First we look at the indicial roots at $t\to-\infty$, these come from studying the problem $P_\gamma^{(0)}w=0$, this is, $\kappa=0$, and are given in \eqref{lem:num}. From this point of view, we have
\[\begin{split}
P_\gamma^{(0)}w=\mathcal{V_*}(t)w=:h_0=
\begin{cases}
O(1)
	& \text{ as } t\to+\infty\\
O(e^{-q_1|t|})
	& \text{ as } t\to-\infty.
\end{cases}
\end{split}\]
Using Theorem \ref{thm:Hardy-potential}, we obtain a particular solution $w_p^-$ (which equals $w$ since any kernel element of $P_\gamma^{(0)}$ grows as either $t\to\pm\infty$), satisfying
\begin{equation*}
w(t)=w_p^-(t)=O(e^{-q_1|t|})
	\quad \text{ as } t\to-\infty.
\end{equation*}
Using the extra piece of information, we now invert $P_\gamma^{(0)}-\kappa$ using Theorem \ref{thm:Hardy-potential} again, with the conditions
\[
(P_\gamma^{(0)}-\kappa)w=(\mathcal{V_*}(t)-\kappa)w=:h=
\begin{cases}
O(e^{-qt})
	& \text{ as } t\to+\infty\\
O(e^{-q_1|t|})
	& \text{ as } t\to-\infty.
\end{cases}
\]
Its particular solution $w_p^+$, according to the argument in Proposition \ref{prop:unstable+}, satisfies either
\[
w_p^+(t)=
\begin{cases}
a_0^1 \cos(\tau_0 t) + a_0^2 \sin(\tau_0 t) + o(1)
	& \text{ as } t\to+\infty\\
O(e^{-\min\{q_1,\varsigma_0\}|t|})
	& \text{ as } t\to-\infty,
\end{cases}
\]
for some $(a_0^1,a_0^2)\neq(0,0)$ (case \emph{i.}), or, even better when $a_0^1=a_0^2=0$, it has an exponential decay of some order $e^{-\sigma_{j_0}t}(a_{j_0}^1\cos(\tau_{j_0}t)+a_{j_0}^2\sin(\tau_{j_0}t))$ as $t\to+\infty$ (case \emph{ii.} or \emph{iii.}). Here $(a_{j_0}^1,a_{j_0}^2)\neq(0,0)$ but $\tau_{j_0}$ can possibly vanish. In any case, we must have again $w=w_p^+$ because we cannot add any exponentially growing kernel elements of $P_\gamma^{(0)}-\kappa$, nor the bounded kernels $\cos(\tau_0 t)$, $\sin(\tau_0 t)$ due to the decay as $t\to-\infty$.

Next, two solutions with the same $(a_0^1,a_0^2)\neq (0,0)$ must have the same asymptotic expansion as $t\to+\infty$ thanks to the Wro\'{n}skian argument above, and thus agree by unique continuation. This shows that the kernel of $\mathcal{L}_0$ is two-dimensional. The same proof applies also to the case where the leading order term is $e^{-\sigma_{j_0}t}(a_{j_0}^1\cos(\tau_{j_0}t)+a_{j_0}^2\sin(\tau_{j_0}t)$ with $\tau_{j_0}\neq0$. In the particular case $\tau_{j_0}=0$, one obtains a one-dimensional kernel for $\mathcal{L}_0$.
\end{proof}

\section{Poho\v{z}aev identities}

Poho\v{z}aev identities for the fractional Laplacian have been considered in \cite{Frank-Lenzmann,RosOton-Serra1,DelaTorre-Hyder-Martinazzi-Sire}, for instance. Based on our study of ODEs with fractional Laplacian, here we derive some (new) Poho\v{z}aev identities for $w$ a radially symmetric solution of
\begin{equation}\label{maineq}
	P_\gamma w-\kappa w= w^{\frac{n+2\gamma}{n-2\gamma}},\quad w=w(t).
\end{equation}
Here $\kappa$ is a real constant.  For later purposes, it will be convenient to replace
\begin{equation*}
-\kappa=\tau-\Lambda_{n,\gamma}
\end{equation*}
This equation appears, for instance, in the study of minimizers for the fractional Caffarelli-Kohn-Nirenberg inequality \cite{Ao-DelaTorre-Gonzalez}.

By Proposition \ref{prop:extension},  problem \eqref{maineq} is equivalent to
\begin{equation}\label{extension-pohozaev}
\left\{\begin{split}
&\partial_\rho(e_1\rho^{1-2\gamma}\partial_\rho W)+e_2\rho^{1-2\gamma}\partial_{tt}W 	=0, \quad \rho\in(0,\rho_0),\ t\in\mathbb R,\\
&-\lim_{\rho\to 0}\rho^{1-2\gamma}\partial_\rho W(\rho,t)+\tau w- w^{\frac{n+2\gamma}{n-2\gamma}}=0 \quad\text{on }\{\rho=0\}.
\end{split}\right.	
\end{equation}

\begin{prop}\label{prop:Pohozaev1}
If $W=W(t,\rho)$ is a solution of \eqref{extension-pohozaev}, then we have the following Poho\v{z}aev identities:
\begin{equation*}
\begin{split}
&	\tau\int w^2\,dt+\iint e_1\rho^{1-2\gamma}(\partial_\rho W)^2\,d\rho dt=\left(\frac{1}{2}+\frac{n-2\gamma}{2n}\right) \int w^{\frac{2n}{n-2\gamma}} \,dt,\\
&\iint e_2\rho^{1-2\gamma}(\partial_t W)^2		\,d\rho dt=\left(\frac{1}{2}-\frac{n-2\gamma}{2n}\right)\int w^{\frac{2n}{n-2\gamma}}\, dt.
\end{split}	
\end{equation*}
\end{prop}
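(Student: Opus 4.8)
The plan is to derive two independent integral identities from the extension formulation \eqref{extension-pohozaev} by testing the interior equation against two natural multipliers: the ``dilation'' multiplier $t\,\partial_t W + \rho\,\partial_\rho W$ (this will encode the scaling structure and produce the power $\frac{2n}{n-2\gamma}$ on the right), and the ``translation''/energy multiplier $\partial_t W$ together with $W$ itself. Concretely, I would multiply the first equation of \eqref{extension-pohozaev} by $W$, integrate over $(0,\rho_0)\times\mathbb R$, integrate by parts in $\rho$ and in $t$, and use the Neumann condition $-\lim_{\rho\to 0}\rho^{1-2\gamma}\partial_\rho W = w^{\frac{n+2\gamma}{n-2\gamma}}-\tau w$ to convert the boundary term into $\int(w^{\frac{2n}{n-2\gamma}} - \tau w^2)\,dt$; this yields the ``energy identity''
\[
\iint e_1\rho^{1-2\gamma}(\partial_\rho W)^2\,d\rho\,dt + \iint e_2 \rho^{1-2\gamma}(\partial_t W)^2\,d\rho\,dt = \int w^{\frac{2n}{n-2\gamma}}\,dt - \tau\int w^2\,dt .
\]
Here one must check the boundary term at $\rho=\rho_0$ vanishes (this is the ``apparent singularity'' remarked on after the metric $\bar g$, behaving like the origin in polar coordinates) and that there are no boundary contributions at $t=\pm\infty$ thanks to the decay of $w$ (from Section \ref{section:Hamiltonian} and the Frobenius expansions of Section \ref{section:Hardy}).

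The second, genuinely Poho\v{z}aev-type, identity comes from testing against the scaling multiplier. The key observation is that for the critical exponent $p=\frac{n+2\gamma}{n-2\gamma}$ the nonlinear term $w^{p+1}=w^{\frac{2n}{n-2\gamma}}$ has exactly the homogeneity that makes the $t$-derivative total: $\partial_t(w^{\frac{2n}{n-2\gamma}}) = \frac{2n}{n-2\gamma}w^{\frac{n+2\gamma}{n-2\gamma}}\partial_t w$. So I would multiply the interior equation by $\partial_t W$ and integrate by parts. This is precisely the computation underlying the Hamiltonian $H_\gamma[W]$ of Theorem \ref{thctthamiltonian}: integrating the resulting identity in $t$ over $\mathbb R$ and using that $H_\gamma[W]$ is constant (indeed, since the nonlinearity is critical, conserved) forces a balance between $\iint e_1\rho^{1-2\gamma}(\partial_\rho W)^2$, $\iint e_2\rho^{1-2\gamma}(\partial_t W)^2$, and the boundary integrals $\int w^2\,dt$, $\int w^{\frac{2n}{n-2\gamma}}\,dt$. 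Combining this ``Hamiltonian balance'' with the energy identity above, and solving the resulting $2\times 2$ linear system for the two unknowns $\iint e_1\rho^{1-2\gamma}(\partial_\rho W)^2$ and $\iint e_2\rho^{1-2\gamma}(\partial_t W)^2$ in terms of $\int w^2\,dt$ and $\int w^{\frac{2n}{n-2\gamma}}\,dt$, produces exactly the stated coefficients $\tfrac12\pm\tfrac{n-2\gamma}{2n}$, with the $\tau\int w^2$ term surviving only in the first identity.

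The technical heart — and the step I expect to be the main obstacle — is the careful justification of the integrations by parts: controlling the behavior of $W$ and its derivatives both near the weighted boundary $\rho=0$ (where $\rho^{1-2\gamma}\partial_\rho W$ has a finite limit but $\partial_\rho W$ itself may blow up like $\rho^{2\gamma-1}$), near the apparent singularity $\rho=\rho_0$, and as $t\to\pm\infty$. Near $\rho=0$ one needs the expansion $W=w + \tilde d_\gamma^{-1}(\text{Neumann data})\,\frac{\rho^{2\gamma}}{2\gamma}+\dots$ so that products like $W\cdot\rho^{1-2\gamma}\partial_\rho W$ and $(t\partial_t W+\rho\partial_\rho W)\cdot\rho^{1-2\gamma}\partial_\rho W$ have integrable traces, and one must confirm that the auxiliary weights $e_1(\rho),e_2(\rho)$ — which are only continuous up to $\rho=0$ with $e_l(0)=1$ — and their derivatives $e_l'(\rho)$ (smooth and positive in $(0,\rho_0)$, as in \eqref{Hamiltonian}) do not spoil the cancellations; the terms involving $e_l'$ are precisely what get absorbed into the Hamiltonian. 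For the decay as $t\to\pm\infty$, I would invoke the Frobenius-type asymptotics (Proposition \ref{prop:behavior-indicial} and Proposition \ref{prop:decay-barw}) to guarantee that $w$, $\partial_t w$ and the corresponding extension quantities decay exponentially, killing all boundary terms at infinity; one subtlety is that the multiplier $t\,\partial_t W$ carries an extra factor of $t$, so one genuinely needs exponential (not merely polynomial) decay here, which the indicial-root analysis supplies. Once these boundedness and decay facts are in place, the rest is the routine algebra of assembling the linear system and reading off the coefficients.
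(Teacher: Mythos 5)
Your first identity is exactly the paper's first step (test against $W$, integrate by parts, use the Neumann condition to produce $\int w^{\frac{2n}{n-2\gamma}} - \tau\int w^2$), so that part matches. For the second identity, however, the paper does \emph{not} pass through the Hamiltonian. It simply multiplies the interior equation by the multiplier $t\,\partial_t W$ (not $\partial_t W$, and without the $\rho\partial_\rho W$ piece you mention at the start), integrates over $(0,\rho_0)\times\mathbb R$, and integrates by parts; the explicit factor of $t$ is what produces the non-trivial terms $\tfrac12\iint e_1\rho^{1-2\gamma}(\partial_\rho W)^2$, $-\tfrac12\iint e_2\rho^{1-2\gamma}(\partial_t W)^2$, $\tfrac{\tau}{2}\int w^2$ and $-\tfrac{n-2\gamma}{2n}\int w^{\frac{2n}{n-2\gamma}}$ from the $t$-integrations by parts. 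Combining this with the energy identity (add and subtract) gives both stated formulas.

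Your description of the second step is where the gap lies. If you literally multiply by $\partial_t W$, integrate in $\rho$, and then integrate the resulting identity $\partial_t H(t)=0$ over $t\in\mathbb R$, you get $H(+\infty)-H(-\infty)=0$, which is vacuous for a decaying solution. The mechanism that salvages your route is different from what you wrote: constancy plus decay give $H(t)\equiv 0$ \emph{pointwise}, and then integrating the explicit formula for $H(t)$ (the version adapted to the shifted equation, carrying the $\tau w^2$ and nonlinear terms, with the weights $e_1,e_2$ from Proposition \ref{prop:extension}) over $t$ reproduces \eqref{pohoz2}. Equivalently, multiplying $\partial_t H=0$ by $t$ and integrating by parts gives $\int H\,dt=0$. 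Either way, this amounts to inserting the factor $t$ somewhere — which is exactly what the paper's multiplier $t\partial_t W$ does directly and more transparently. Also note that the Hamiltonian in Theorem \ref{thctthamiltonian} is written with weights $e_1',e_2'$ and a prefactor $\Lambda_{n,\gamma}/\tilde d_\gamma$, for the unshifted critical equation; one must recompute it for \eqref{extension-pohozaev} with $\tau$ and the $e_1,e_2$ of Proposition \ref{prop:extension} before the integrated identity matches the statement. In short: your overall scheme is sound and morally equivalent to the paper's, but the crucial factor $t$ is missing from the multiplier you actually propose, and the detour through the Hamiltonian (with its differently normalized weights) introduces avoidable bookkeeping compared to testing directly against $t\,\partial_t W$.
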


\begin{proof}
Multiply the first equation in \eqref{extension-pohozaev} by $W$ and integrate by parts; we have
\begin{equation}\label{pohoz1}
\iint e_1(\rho)\rho^{1-2\gamma}(\partial_\rho W)^2+e_2(\rho)\rho^{1-2\gamma}(\partial_t W)^2+\tau \int w^2\,dt=\int w^{\frac{2n}{n-2\gamma}}\,dt.	
\end{equation}
Next we multiply the same equation by $t\partial_t W$; one has
\begin{equation*}
0=\iint \partial_\rho (e_1\rho^{1-2\gamma}\partial_\rho W)t\partial_t W +e_2\rho^{1-2\gamma}\partial_{tt}W t\partial_t W\, d\rho dt=:I_1+I_2.	
\end{equation*}
First we consider $I_1$,
\begin{equation*}
\begin{split}
I_1&=-\int \lim_{\rho\to 0}\rho^{1-2\gamma}\partial_\rho W t\partial_t W \,dt+\frac{1}{2}\iint e_1\rho^{1-2\gamma}(\partial_\rho W)^2\,d\rho dt	\\
&=\frac{\tau}{2}\int w^2\,dt-\frac{n-2\gamma}{2n}\int w^{\frac{2n}{n-2\gamma}}\,dt+\frac{1}{2}\iint e_1\rho^{1-2\gamma}(\partial_\rho {W})^2\,d\rho dt.
\end{split}	
\end{equation*}
Here we have used $e(\rho)\to 1$ as $\rho\to 0$, and the second equation in \eqref{extension-pohozaev}.
Similarly, it holds that
\begin{equation*}
	\begin{split}
I_2=-\frac{1}{2}\iint e_2\rho^{1-2\gamma}(\partial_t{W})^2		\,d\rho dt,
	\end{split}
\end{equation*}
so we have
\begin{equation}\label{pohoz2}
\frac{\tau}{2}\int w^2\,dt-\frac{n-2\gamma}{2n}\int w^{\frac{2n}{n-2\gamma}}\,dt+\frac{1}{2}\iint \left[e_1\rho^{1-2\gamma}(\partial_\rho w)^2-e_2\rho^{1-2\gamma}(\partial_t w)^2\right]		\,d\rho dt=0.	
\end{equation}
Combining \eqref{pohoz1} and \eqref{pohoz2}, one proves the claim of the Proposition.
\end{proof}

\medskip

Now we provide another Poho\v{z}aev type identity based on the variation of constants formula. We discuss first the simpler case where all $\tau_j=0$, then the general case. Let $w$ be a solution to \eqref{maineq}. As in Corollary \ref{cor:ODEsystem}, we write $w$ as
\begin{equation*}
w(t)=\sum_{j=0}^\infty c_j w_j(t),
    \quad \text{ where } \quad
w_j(t):=\int_{\mathbb R}e^{-\sigma_j |t-t'|}h(t')\,dt',
\end{equation*}
for $h=w^{\frac{n+2\gamma}{n-2\gamma}}$. Here
$w_j$ is a particular solution to the second order ODE
 \begin{equation}\label{ODE1}
w_j''-\sigma_j^2w_j=-2\sigma_j w^{\frac{n+2\gamma}{n-2\gamma}}.
\end{equation}

\begin{prop}\label{prop:Pohozaev2}
Let $w$ be a (decaying) radially symmetric solution to \eqref{maineq}. Then
\begin{equation*}
\dfrac{1}{2\gamma}
	\sum_{j}\frac{c_j}{\sigma_j}
		\int_{\mathbb{R}}(w_j')^2\,dt
=\dfrac{1}{2(n-\gamma)}
	\sum_j \sigma_jc_j
		\int_{\mathbb{R}} w_j^2\,dt
=\dfrac{1}{n}
	\int_{\mathbb{R}} w^{\frac{2n}{n-2\gamma}}\,dt.
\end{equation*}
\end{prop}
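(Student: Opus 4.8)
The plan is to mimic the argument of Proposition \ref{prop:Pohozaev1} but at the level of the infinite ODE system \eqref{ODE1}, replacing the extension variable $\rho$ by the summation index $j$. For each fixed $j$, the function $w_j$ solves the second order ODE $w_j''-\sigma_j^2 w_j=-2\sigma_j w^{\frac{n+2\gamma}{n-2\gamma}}$, so I would first multiply \eqref{ODE1} by $w_j$ and integrate over $\mathbb{R}$: integrating by parts and using the decay of $w_j$ (which follows from the decay of $w$ and hence of $h=w^{\frac{n+2\gamma}{n-2\gamma}}$, via Lemma \ref{lem:convolution}), one gets
\[
-\int_{\mathbb{R}} (w_j')^2\,dt -\sigma_j^2\int_{\mathbb{R}} w_j^2\,dt = -2\sigma_j \int_{\mathbb{R}} w_j\, w^{\frac{n+2\gamma}{n-2\gamma}}\,dt.
\]
Then I would multiply \eqref{ODE1} by the ``Pohozaev multiplier'' $t\,w_j'$ (the $1$-dimensional analogue of $x\cdot\nabla$) and integrate: the term $\int w_j'' \,t\,w_j'\,dt = -\tfrac12\int (w_j')^2\,dt$ and the term $-\sigma_j^2\int w_j\, t\,w_j'\,dt = \tfrac{\sigma_j^2}{2}\int w_j^2\,dt$, so
\[
-\tfrac12\int_{\mathbb{R}}(w_j')^2\,dt + \tfrac{\sigma_j^2}{2}\int_{\mathbb{R}} w_j^2\,dt = -2\sigma_j \int_{\mathbb{R}} t\, w_j'\, w^{\frac{n+2\gamma}{n-2\gamma}}\,dt.
\]

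Next I would multiply each of these two identities by $\tfrac{c_j}{\sigma_j}$ and sum over $j$. On the left we obtain, using $w=\sum_j c_j w_j$, the quantities $\sum_j \tfrac{c_j}{\sigma_j}\int (w_j')^2$ and $\sum_j \sigma_j c_j \int w_j^2$. On the right, the key point is to recognise the nonlinear terms: $\sum_j \tfrac{c_j}{\sigma_j}\cdot 2\sigma_j \int w_j\, w^{\frac{n+2\gamma}{n-2\gamma}}\,dt = 2\int \big(\sum_j c_j w_j\big) w^{\frac{n+2\gamma}{n-2\gamma}}\,dt = 2\int w^{\frac{2n}{n-2\gamma}}\,dt$, and similarly $\sum_j \tfrac{c_j}{\sigma_j}\cdot 2\sigma_j \int t\, w_j'\, w^{\frac{n+2\gamma}{n-2\gamma}}\,dt = 2\int t\, w'\, w^{\frac{n+2\gamma}{n-2\gamma}}\,dt$. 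This last integral is handled by writing $t\,w'\,w^{\frac{n+2\gamma}{n-2\gamma}} = \tfrac{n-2\gamma}{2n}\,t\,\big(w^{\frac{2n}{n-2\gamma}}\big)'$ and integrating by parts once more to get $-\tfrac{n-2\gamma}{2n}\int w^{\frac{2n}{n-2\gamma}}\,dt$. Combining, the summed ``$w_j$-tested'' identity reads
\[
-\sum_j \tfrac{c_j}{\sigma_j}\int (w_j')^2\,dt - \sum_j \sigma_j c_j \int w_j^2\,dt = -2\int w^{\frac{2n}{n-2\gamma}}\,dt,
\]
while the summed ``$t w_j'$-tested'' identity reads
\[
-\tfrac12 \sum_j \tfrac{c_j}{\sigma_j}\int (w_j')^2\,dt + \tfrac12 \sum_j \sigma_j c_j \int w_j^2\,dt = \tfrac{n-2\gamma}{n}\int w^{\frac{2n}{n-2\gamma}}\,dt.
\]
Solving this linear $2\times 2$ system for $\sum_j \tfrac{c_j}{\sigma_j}\int (w_j')^2$ and $\sum_j \sigma_j c_j\int w_j^2$ in terms of $\int w^{\frac{2n}{n-2\gamma}}$ yields exactly the stated chain of equalities (with the factors $\tfrac{1}{2\gamma}$, $\tfrac{1}{2(n-\gamma)}$, $\tfrac{1}{n}$).

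\textbf{Main obstacle.} The analytic content — as opposed to the formal algebra above — is the justification of interchanging the infinite sum over $j$ with the integrals in $t$, and the vanishing of all boundary terms at $t=\pm\infty$ (in particular the terms $[t(w_j')^2]$ and $[t w_j w_j']$ coming from the Pohozaev integration by parts). For this I would invoke the decay estimates: $h=w^{\frac{n+2\gamma}{n-2\gamma}}$ inherits a definite exponential decay from the decay of $w$ (Proposition \ref{prop:decay-barw}-type asymptotics, with $\sigma_0 = \tfrac{n-2\gamma}{2}$ at the critical exponent), Lemma \ref{lem:convolution} then gives $w_j(t)=O(e^{-\sigma_0|t|})$ and $w_j'(t)=O(e^{-\sigma_0|t|})$ uniformly enough in $j$, and the constants $c_j$ decay fast enough (from Theorem \ref{thm:Hardy-potential}, since $\mathcal{G}_0$ is a convergent series) that $\sum_j |c_j|/\sigma_j \cdot \|w_j'\|_{L^2}^2$ and $\sum_j \sigma_j |c_j| \cdot \|w_j\|_{L^2}^2$ converge. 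Granting these convergence facts, Fubini and dominated convergence legitimise every interchange, and the boundary terms vanish because $t\,e^{-2\sigma_0|t|}\to 0$. The only genuinely delicate point is obtaining the uniform-in-$j$ decay of $w_j$ and $w_j'$, which requires tracking the constants in Lemma \ref{lem:convolution}; since $\sigma_0$ is the smallest indicial root and $h$ decays strictly faster than $e^{-\sigma_0 t}$ is false at the critical exponent (indeed $h \sim w^{(n+2\gamma)/(n-2\gamma)}$ decays like $e^{-\sigma_0\frac{n+2\gamma}{n-2\gamma} t}$, which is faster than $e^{-\sigma_0 t}$), one is in the ``good'' regime $a \ne a_\pm$ of Lemma \ref{lem:convolution} and the bound $w_j = O(e^{-\sigma_0|t|})$ holds without logarithmic losses.
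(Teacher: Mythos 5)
Your proposal is correct and follows essentially the same route as the paper: test \eqref{ODE1} with $c_j w_j/\sigma_j$ and with $c_j t w_j'/\sigma_j$ (the paper uses $2c_j t w_j'/\sigma_j$, a harmless normalization), sum in $j$, recognise the nonlinear term via $w=\sum_j c_j w_j$, and solve the resulting $2\times 2$ linear system. The paper states the two summed identities without spelling out the per-$j$ step, the identification $w\cdot w^{\frac{n+2\gamma}{n-2\gamma}}=w^{\frac{2n}{n-2\gamma}}$, or the interchange/decay justifications, so your write-up is a faithful and slightly more detailed version of the same argument.
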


\begin{proof}
We multiply the above equation \eqref{ODE1} by $c_jw_j/\sigma_j$,  sum in $j$ and integrate by parts:
\begin{equation*}
-\sum_{j}\frac{c_j}{\sigma_j}
	\int_{\mathbb{R}} (w_j')^2\,dt
	-\sum_j \sigma_jc_j
		\int_{\mathbb{R}} w_j^2\,dt
=
	-2\int_{\mathbb{R}} w^{\frac{2n}{n-2\gamma}}\,dt.
\end{equation*}
Similarly, multiply the same equation by $2c_j t w_j'/\sigma_j$, sum in $j$ and integrate:
\begin{equation*}
-\sum_{j}\frac{c_j}{\sigma_j}
	\int_{\mathbb{R}} (w_j')^2\,dt
+\sum_j \sigma_jc_j
	\int_{\mathbb{R}} w_j^2\,dt
=
	2\frac{n-2\gamma}{n}
		\int_{\mathbb{R}} w^{\frac{2n}{n-2\gamma}}\,dt.
\end{equation*}
We add and subtract the two equations, to obtain
\begin{equation*}
\sum_{j}\frac{c_j}{\sigma_j}
	\int_{\mathbb{R}}(w_j')^2\,dt
=
	\left[1-\frac{n-2\gamma}{n}\right]
		\int_{\mathbb{R}} w^{\frac{2n}{n-2\gamma}}\,dt,
\end{equation*}
and
\begin{equation*}
\sum_j \sigma_jc_j
	\int_{\mathbb{R}} w_j^2\,dt
=
	\left[1+\frac{n-2\gamma}{n}\right]
		\int_{\mathbb{R}} w^{\frac{2n}{n-2\gamma}}\,dt.
\end{equation*}
This completes the proof.
\end{proof}

\medskip

In the general case, as in Corollary \ref{cor:ODEsystem-complex}, we write
\begin{equation*}
w(t)=
\mathrm{Re\,}\sum_{j=0}^\infty c_j w_j(t),
    \quad \text{ where } \quad
w_j(t):=
\int_{\mathbb R}e^{-(\sigma_j+i\tau_j) |t-t'|}h(t')\,dt',
\end{equation*}
for $h=w^{\frac{n+2\gamma}{n-2\gamma}}$. Then the following complex-valued ODE system is satisfied,
\begin{equation*}
w_j''-(\sigma_j+i\tau_j)^2w_j
=-2(\sigma_j+i\tau_j)
w^{\frac{n+2\gamma}{n-2\gamma}}.
\end{equation*}

\begin{prop}\label{prop:Pohozaev2-complex}
Let $w$ be a (decaying) radially symmetric solution to \eqref{maineq}. Then
\begin{equation*}
\dfrac{1}{2\gamma}
\mathrm{Re\,}
	\sum_{j}\frac{c_j}{\sigma_j}
		\int_{\mathbb{R}}(w_j')^2\,dt
=\dfrac{1}{2(n-\gamma)}
    \mathrm{Re\,}
	\sum_j \sigma_jc_j
		\int_{\mathbb{R}} w_j^2\,dt
=\dfrac{1}{n}
	\int_{\mathbb{R}} w^{\frac{2n}{n-2\gamma}}\,dt.
\end{equation*}
\end{prop}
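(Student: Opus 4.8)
The plan is to mimic the proof of Proposition \ref{prop:Pohozaev2} line by line, carrying out all computations with the complex-valued functions $w_j$ and extracting the real part only at the end. Recall from Corollary \ref{cor:ODEsystem-complex} that, with $h=w^{\frac{n+2\gamma}{n-2\gamma}}$ (which is real-valued) and $\mu_j:=\sigma_j+i\tau_j$, one has $w_j=e^{-\mu_j|\cdot|}\ast h$, the ODE $w_j''-\mu_j^2w_j=-2\mu_jh$, and $w=\mathrm{Re}\sum_jc_jw_j$; consequently $w'=\mathrm{Re}\sum_jc_jw_j'$ as well.

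First I would test the $j$-th ODE against $\tfrac{c_j}{\mu_j}w_j$, integrate over $\mathbb R$, integrate by parts once, and sum over $j$, to get
\[
-\sum_j\frac{c_j}{\mu_j}\int_{\mathbb R}(w_j')^2\,dt-\sum_j\mu_jc_j\int_{\mathbb R}w_j^2\,dt=-2\int_{\mathbb R}h\,\Big(\sum_jc_jw_j\Big)\,dt.
\]
Taking real parts and using that $h$ is real with $\mathrm{Re}\sum_jc_jw_j=w$ converts the right-hand side to $-2\int_{\mathbb R}w^{\frac{2n}{n-2\gamma}}\,dt$; call this identity (I). Next I would test the same ODE against $\tfrac{2c_jt}{\mu_j}w_j'$, integrate by parts (using $\int_{\mathbb R}t\,\partial_t\big((w_j')^2\big)\,dt=-\int_{\mathbb R}(w_j')^2\,dt$ and $\int_{\mathbb R}t\,\partial_t\big(w_j^2\big)\,dt=-\int_{\mathbb R}w_j^2\,dt$), and sum over $j$; the right-hand side is $-4\int_{\mathbb R}th\big(\sum_jc_jw_j'\big)\,dt$, whose real part is $-4\int_{\mathbb R}thw'\,dt$, and since $thw'=\tfrac{n-2\gamma}{2n}\,t\,\partial_t\big(w^{\frac{2n}{n-2\gamma}}\big)$ one more integration by parts gives $\tfrac{2(n-2\gamma)}{n}\int_{\mathbb R}w^{\frac{2n}{n-2\gamma}}\,dt$; call this identity (II). Adding (I) and (II) isolates $\mathrm{Re}\sum_j\tfrac{c_j}{\mu_j}\int_{\mathbb R}(w_j')^2\,dt$, subtracting them isolates $\mathrm{Re}\sum_j\mu_jc_j\int_{\mathbb R}w_j^2\,dt$, and, after collecting the constants exactly as in Proposition \ref{prop:Pohozaev2}, one reads off the three equal expressions in the statement; they reduce to Proposition \ref{prop:Pohozaev2} when every $\tau_j=0$.

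The genuinely routine part is the analytic bookkeeping: that the series $\sum_jc_jw_j$ may be differentiated and integrated termwise against $h$, $th$ and $tw_j'$, and that every boundary contribution at $t\to\pm\infty$ vanishes. For the latter, Lemma \ref{lem:convolution} applied to $w_j=e^{-\mu_j|\cdot|}\ast h$ shows that $w_j$ and $w_j'$ decay exponentially — at a rate controlled below by $\sigma_j=\mathrm{Re}\,\mu_j$ and by the decay of $h=w^{\frac{n+2\gamma}{n-2\gamma}}$ — so that $t(w_j')^2\to0$ and $tw_j^2\to0$; for the termwise summation one uses that $\{\sigma_j\}$ is strictly increasing with no accumulation point (Theorem \ref{thm:poles}) and that the $c_j$ are the decaying Green's-function coefficients of Theorem \ref{thm:Hardy-potential}, so the tails $\sum_{j\ge J}c_jw_j$ are uniformly controlled. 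The one point where the non-local and complex structure genuinely enters — and the place I expect to require the most care — is making sure the operation $\mathrm{Re}(\cdot)$ is threaded through the whole computation consistently: one keeps the complex-valued $w_j$ throughout, invokes the reality of $h$ together with the fact that $\mathrm{Re}$ commutes with $\int(\cdot)\,dt$ and with $\partial_t$ in order to recover $w$ and $w'$ on the right-hand sides, and only at the very end reads off the stated real identities. I expect this bookkeeping, rather than any individual estimate, to be the main obstacle.
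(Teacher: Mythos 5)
Your proposal is correct and follows essentially the same route as the paper: test the complex ODE against $\tfrac{c_j}{\mu_j}w_j$ and $\tfrac{2c_j t}{\mu_j}w_j'$ with $\mu_j=\sigma_j+i\tau_j$, integrate by parts, sum in $j$, take real parts (using that $h$ is real and $w=\mathrm{Re}\sum_j c_j w_j$, $w'=\mathrm{Re}\sum_j c_j w_j'$), then add and subtract the two resulting identities exactly as in Proposition \ref{prop:Pohozaev2}. The paper's proof is terser but identical in substance; your extra care about vanishing boundary terms, termwise summation, and threading $\mathrm{Re}(\cdot)$ through the computation only makes explicit what the paper leaves implicit. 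One small remark: both your derivation and the paper's own displayed intermediate identities produce sums weighted by $\tfrac{c_j}{\sigma_j+i\tau_j}$ and $(\sigma_j+i\tau_j)c_j$, whereas the proposition's statement is printed with $\tfrac{c_j}{\sigma_j}$ and $\sigma_j c_j$; this appears to be a typo in the statement, and your computation is consistent with the proof rather than the literal statement.
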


\begin{proof}
The proof stays almost the same as in Proposition \ref{prop:Pohozaev2}, except that we take real parts upon testing against the corresponding multiple of $w_j$ and $tw_j'$, which yields
\begin{equation*}
-\mathrm{Re\,}
    \sum_{j}\frac{c_j}{\sigma_j+i\tau_j}
	\int_{\mathbb{R}} (w_j')^2\,dt
-\mathrm{Re\,}
    \sum_j (\sigma_j+i\tau_j)c_j
		\int_{\mathbb{R}} w_j^2\,dt
=
	-2\int_{\mathbb{R}} w^{\frac{2n}{n-2\gamma}}\,dt,
\end{equation*}
and
\begin{equation*}
-\mathrm{Re\,}
\sum_{j}\frac{c_j}{\sigma_j+i\tau_j}
	\int_{\mathbb{R}} (w_j')^2\,dt
+\mathrm{Re\,}
\sum_j (\sigma_j+i\tau_j)c_j
	\int_{\mathbb{R}} w_j^2\,dt
=
	2\frac{n-2\gamma}{n}
		\int_{\mathbb{R}} w^{\frac{2n}{n-2\gamma}}\,dt.
\end{equation*}
It suffices to add and subtract in the same way.
\end{proof}

\bigskip

\noindent\textbf{Acknowledgements.}  M. Fontelos is supported by the Spanish government grant MTM2017-89423-P.  A. DelaTorre (partially) and M.d.M. Gonz\'alez are supported by the Spanish government grant  MTM2017-85757-P. The research of J. Wei is supported by NSERC of Canada. H. Chan has received funding from the European Research Council under the Grant Agreement No. 721675 ``Regularity and Stability in Partial Differential Equations (RSPDE)''.

\end{document}